\documentclass[12pt]{amsart}
\usepackage{color}
\usepackage{graphicx}
\textheight 7.7truein
\textwidth 6.4truein
\oddsidemargin +0truein
\evensidemargin +0truein
\usepackage{graphicx}
\usepackage{epsfig}
\usepackage{eucal}
\usepackage{tikz}
\usetikzlibrary{arrows}

\newtheorem{thm}{Theorem}[section]

\newtheorem{defn}[thm]{Definition}
\newtheorem{lemma}[thm]{Lemma}

\newtheorem{cor}[thm]{Corollary}
\newtheorem{remark}[thm]{Remark}
\newtheorem{example}[thm]{Example}

\usepackage{amsmath}
\usepackage{amsxtra}
\usepackage{amscd}
\usepackage{amsthm}
\usepackage{amsfonts}
\usepackage{amssymb}
\usepackage{eucal}

\newcommand{\bmb}{\left( \begin{array}{rr}}
\newcommand{\enm}{\end{array}\right)}


\newcommand{\gl}{{\mathfrak{gl}}}
\newcommand{\n}{{\mathfrak{n}}}

\newcommand{\m}{\mathfrak{m}}

\newcommand{\U}{\mathcal U}

\renewcommand{\sl}{{\mathfrak{sl}}}

\newcommand{\C}{{\mathbb C}}

\newcommand{\Z}{{\mathbb Z}}
\newcommand{\Q}{{\mathcal Q}}

\newcommand{\N}{{\mathbb N}}

\newcommand{\bx}{{\mathbf x}}

\newcommand{\bu}{{\mathbf u}}

\newcommand{\al}{{\alpha}}

\numberwithin{equation}{section}


\begin{document}

\title{Quantum Q systems: From cluster algebras to quantum current algebras}
\author{Philippe Di Francesco} 
\address{PDF: Department of Mathematics, University of Illinois MC-382, Urbana, IL 61821, U.S.A. e-mail: philippe@illinois.edu}
\author{Rinat Kedem}
\address{RK: Department of Mathematics, University of Illinois MC-382, Urbana, IL 61821, U.S.A. e-mail: rinat@illinois.edu}
\begin{abstract}
In this paper, we recall our renormalized quantum Q-system associated with representations of the Lie algebra $A_r$, and show that it can be viewed as a quotient of the quantum current algebra $U_q(\n[u,u^{-1}])\subset U_q(\widehat{\sl}_2)$ in the Drinfeld presentation. Moreover, we find the interpretation of the conserved quantities in terms of Cartan currents at level 0, and the rest of the current algebra, in a non-standard polarization in terms of generators in the quantum cluster algebra.
\end{abstract}

\maketitle
\date{\today}
\tableofcontents
\section{Introduction}

The quantum $Q$-system can be regarded in two different ways, as an algebra and as a discrete evolution. In our previous work \cite{DFKnoncom}, we considered it as a discrete dynamical system for a set of non-commuting variables, which forms a quantum integrable system. As such, it has commuting integrals of motion (conserved quantities) and can be solved combinatorially. 

We also considered the quantum $Q$-system as a relation in the non-commutative algebra generated by these dynamical variables \cite{qKR}. 
As such, it is a subalgebra of a quantum cluster algebra, whose generators are non-commuting and invertible. The relations of the cluster algebra are mutations, which can be interpreted as recursion relations, as well as the commutation relations within ``clusters" forming overlapping subsets of the generators.

In \cite{DFK15}, we showed that the generators of the quantum $Q$-system algebra act on the space of symmetric polynomials with coefficients in $\Z[q,q^{-1}]$ ($q$ a central formal variable). The resulting symmetric functions are interpreted as the graded characters \cite{FL} of the Feigin-Loktev fusion product of KR-modules for the polynomial algebra $\sl_n[u]$. The generators of the quantum $Q$-system act on these polynomials as difference operators, mapping the graded character for a given tensor product to that with one extra tensored representation. This gives an inductive rule for computing the characters.

This leaves open the question of the exact nature of the algebra of the quantum $Q$-system, of the exact representation given by the action by difference operators on the graded characters, and of the interpretation of conserved quantities within this algebra.

This paper is the first of a sequence of two. In the second paper \cite{DFK16}, we show
 that there is a natural $t$-deformation of the above difference operators.
These are similar to, but simpler than the raising operators for Macdonald polynomials introduced by Kirillov and Noumi \cite{kinoum}, 
and they are part of a representation of the quantum toroidal algebra of $\gl_1$ at level 0 \cite{FJqtoroidal}, with quantum parameters $q$ and $t$. 

The quantum $Q$-system is the dual $q$-Whittaker limit of this algebra, in the sense of Macdonald theory, namely corresponds to the limit $t\to\infty$. More precisely, when rephrased in terms of current generators, the mutation and commutation relations of the quantum $Q$-system are equivalent to the relations among currents in the nilpotent subalgebra of the quantum enveloping algebra of $\sl_2$. This is the main result of this paper, which we prove directly in the $q$-Whittaker limit. Moreover, we construct in this representation the other generators of the quantized affine algebra of $\sl_2$. 

The representation has certain non-standard features. In particular, the quantum $Q$-system is associated with the rank $r$ of the algebra (type $A_r$). This translates into an extra, rank-dependent relation satisfied by the current generators, which is not present in the quantum affine algebra.

Because of the complexity of the formulas and the proofs, we will present the quantum toroidal deformation in a separate publication. In this paper, we concentrate on the structure of the quantum $Q$-system itself.

The paper is organized as follows. In Section 2, we define the graded algebra $\U_r$ associated with the $A_r$ quantum $Q$-system extended with coefficients and present the main results of the paper, namely that $\U_r$ may be reformulated as a quotient of the nilpotent subalgebra of the quantum affine algebra of $\sl_2$. Section 3 reformulates the conserved quantities of this system and their properties as discrete Hamiltonians. These are used in
Section 4, which gathers the proofs of the main theorems of the paper, using in particular constant term identities similar to those appearing in shuffle algebras. In Section 5, we introduce furhter currents 
using automorphisms of $\U_r$ and obtain an embedding of $\U_r$ into a quotient of the full quantized affine algebra of 
$sl_2$ with non-standard vanishing conditions on the Cartan currents. We gather discussions and concluding remarks in Section 6, in particular we address the full $(q,t)$-deformation of the constructions of this paper.

\vskip.2in

\noindent{\bf Acknowledgments.} We thank O.Babelon, F. Bergeron, J.-E. Bourgine, I. Cherednik, A. Negut, V. Pasquier, and O. Schiffmann for discussions at various stages of this work. R.K.'s research is supported by NSF grant DMS-1404988. P.D.F. is supported by the NSF grant DMS-1301636 and the Morris and Gertrude Fine endowment. R.K. would like to thank the Institut de Physique Th\'eorique (IPhT) of Saclay, France, for hospitality during various stages of this work. The authors also
acknowledge hospitality and support from Galileo Galilei Institute, Florence, Italy, as part of the scientific program on
``Statistical Mechanics, Integrability and Combinatorics", from the Centre de Recherche Math\'ematique de l'Universit\'e de Montreal during the thematic semester: ``AdS/CFT, Holography, Integrability",
as well as of the Kavli Institute for Theoretical Physics, Santa Barbara, California,  during the program ``New approaches to non-equilibrium and random systems", supported by the NSF grant PHY11-25915.

\section{An extended quantum Q system}
Let us first define the algebra $\U_r$. The definition is via generators and relations,
where the relations are what we call the $M$-system, commutation relations and a rank condition. We then state a new definition, using a restricted set of generators, and a different set of relations. The main relation takes the familiar form of an exchange relation in a current algebra. One of the main theorems in this paper is that the two definitions are equivalent. 

The $M$-system is the mutation relation which appears in the quantum cluster algebra defined by $Q$-system of type $A_r$ \cite{DFKnoncom,qKR}, extended by a single central element which is a coefficient of the cluster algebra, as well as a degree operator. 

The alternative definition given in this paper is as a rank-dependent quotient of a current algebra. The current algebra algebra is identified with the Drinfeld quantization of the loop algebra of the nilpotent subalgebra of $\sl_2$, a subalgebra of the quantum affine algebra \cite{Drinfeld}. The generators of $\U_r$ are either components of the generating function of this subalgebra, or polynomials in these generators, given by a quantum determinant formula. 

\subsection{The algebra $\U_r$}

\begin{defn} Let $\U'$ be the algebra generated over the ring $\Z_q=\Z[q,q^{-1}]$ by the non-commuting generators
$$\{M_{\al,n}, \al\in \N, n\in\Z\},$$ 
subject to two sets of relations. 
The first is called the $M$-system\footnote{Throughout this paper, we shall refer to equation \eqref{Msys} 
as the ``$M$-system" relation, in analogy
with the $Q$-system relation (see below for a precise connection).}:
\begin{equation}
q^{\al} M_{\al,n+1} M_{\al,n-1} = M_{\al,n}^2 - M_{\al+1,n}M_{\al-1,n},\quad \al\in \N, n\in \Z,
\label{Msys}
\end{equation}
with the convention that $M_{0,n}=1$ for all $n$. The second set of relations are commutation relations among the generators:
\begin{equation}
M_{\al,n} M_{\beta,n+\epsilon} = q^{\min(\al,\beta)\epsilon} M_{\beta,n+\epsilon}M_{\al,n}, \quad \epsilon\in\{0, 1\}, \al,\beta \in \N, n\in\Z.\label{Mcom} 
\end{equation}
\end{defn}



Finally, there is a rank-dependent relation.
\begin{defn} Let $r$ be a fixed positive integer. The
algebra $\U'_r$ is the quotient of $\U'$ by the ideal generated by the relations
\begin{equation}\label{Ur}
M_{r+2,n}=0 , \quad n\in \Z.
\end{equation}
\end{defn}


\begin{lemma}
The algebras $\U'$ and $\U_r'$ are $\Z$-graded, with
$$
\deg M_{\al,n} = \al n.
$$
\end{lemma}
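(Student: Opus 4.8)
The plan is to prove that the proposed degree assignment makes each defining relation homogeneous, so that the two-sided ideal they generate is a homogeneous ideal and the quotient inherits a $\Z$-grading. First I would equip the free associative $\Z_q$-algebra $F$ on the symbols $\{M_{\al,n}:\al\in\N,\ n\in\Z\}$ with a $\Z$-grading, placing the coefficient ring $\Z_q=\Z[q,q^{-1}]$ in degree $0$ (so that $q$ is a central variable of degree $0$) and setting $\deg M_{\al,n}=\al n$. A monomial $M_{\al_1,n_1}\cdots M_{\al_k,n_k}$ then acquires degree $\sum_{i=1}^k \al_i n_i$, giving a decomposition $F=\bigoplus_{d\in\Z}F_d$ with $F_d F_e\subseteq F_{d+e}$. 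The convention $M_{0,n}=1$ is compatible with this grading because $\deg M_{0,n}=0\cdot n=0$, the degree of the identity.

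The structural fact I would invoke is standard: if a two-sided ideal $I\subseteq F$ is generated by homogeneous elements, then $F/I$ is $\Z$-graded with $(F/I)_d$ equal to the image of $F_d$. It therefore suffices to check that each defining relation, rewritten as the vanishing of an element of $F$, is a $\Z_q$-linear combination of monomials all of the same degree.

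Then I would verify homogeneity relation by relation. For the $M$-system \eqref{Msys}, the three products $M_{\al,n+1}M_{\al,n-1}$, $M_{\al,n}^2$ and $M_{\al+1,n}M_{\al-1,n}$ all have degree $2\al n$, since $\al(n+1)+\al(n-1)=2\al n$, $\,2\cdot \al n=2\al n$, and $(\al+1)n+(\al-1)n=2\al n$; as the scalar $q^{\al}$ has degree $0$, the element $q^{\al}M_{\al,n+1}M_{\al,n-1}-M_{\al,n}^2+M_{\al+1,n}M_{\al-1,n}$ is homogeneous of degree $2\al n$. For the commutation relations \eqref{Mcom}, both $M_{\al,n}M_{\beta,n+\epsilon}$ and $M_{\beta,n+\epsilon}M_{\al,n}$ have degree $\al n+\beta(n+\epsilon)$, and $q^{\min(\al,\beta)\epsilon}$ is again a degree-$0$ scalar, so the associated element is homogeneous. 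Finally, each generator $M_{r+2,n}$ appearing in \eqref{Ur} is itself homogeneous, of degree $(r+2)n$. Consequently the defining ideals of both $\U'$ and $\U'_r$ are homogeneous, and the two algebras are $\Z$-graded with $\deg M_{\al,n}=\al n$ as claimed.

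There is no genuine obstacle here; the statement is essentially a bookkeeping check, and the only points I would flag explicitly are the two that could silently break the argument: the prefactor $q$ must be assigned degree $0$ so that the scalars $q^{\al}$ and $q^{\min(\al,\beta)\epsilon}$ do not disturb homogeneity, and the convention $M_{0,n}=1$ is consistent with the grading precisely because $\deg M_{0,n}=0$, which must be confirmed for the $\al=1$ instance of \eqref{Msys} where the term $M_{\al-1,n}=M_{0,n}$ appears.
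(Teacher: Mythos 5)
Your proof is correct. It takes a slightly different route from the paper's: you argue directly that the defining ideal of the free algebra is homogeneous for the grading $\deg M_{\al,n}=\al n$ (checking that all terms of \eqref{Msys}, \eqref{Mcom} and \eqref{Ur} have matching degrees, with $q$ in degree $0$), so the quotient inherits the $\Z$-grading. The paper instead rewrites \eqref{Msys} as a recursion on $\al$, namely $M_{\al+1,n}M_{\al-1,n}=M_{\al,n}^2-q^{\al}M_{\al,n+1}M_{\al,n-1}$, and argues by induction on $\al$ starting from $\deg M_{0,n}=0$ and $\deg M_{1,n}=n$, with the homogeneity of \eqref{Mcom} and \eqref{Ur} invoked at the end. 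The underlying arithmetic ($\al(n+1)+\al(n-1)=2\al n=(\al+1)n+(\al-1)n$, etc.) is identical in both arguments; your version is the more standard and self-contained one, while the paper's inductive phrasing emphasizes that the degrees of all $M_{\al,n}$ are already forced by those of the $M_{1,n}$, foreshadowing the later result that the $M_{1,n}$ generate the whole algebra. Your two flagged points (degree $0$ for $q$, and consistency of the convention $M_{0,n}=1$) are exactly the right ones to check.
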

\begin{proof}
We rewrite the relation \eqref{Msys} as
\begin{equation}
M_{\al+1,n} M_{\al-1,n} = M_{\al,n}^2 - q^\al M_{\al,n+1} M_{\al,n-1}.
\end{equation}
This is a recursion relation on the index $\al$, starting with the collection $M_{0,n}=1$ which have degree 0, and $M_{1,n}$, which have degree $n$. The statement for $\U'$ follows by induction, since the  commutation relations \eqref{Mcom} and the rank restriction \eqref{Ur} are homogeneous relations.
\end{proof}

We adjoin the invertible degree operator $\Delta$ to $\U'$ and $\U_r'$, where
$$
\Delta x = q^n x \Delta, \quad x\in \U'[n],
$$
where $\U'[n]$ is the homogeneous graded component of $\U'$, and similarly for $\U'_r$.
That is, 
\begin{equation}\label{DeltaM}
\Delta M_{\al,n} = q^{\al n} M_{\al,n} \Delta.
\end{equation}
The algebras extended by $\Delta^{\pm1}$ are denoted by $\U$ and $\U_r$, respectively.

\subsection{Relation to the quantum $Q$-system}
The algebra $\U_r^{\rm loc}$ is closely related to the $A_r$ quantum $Q$-system introduced in \cite{DFKnoncom,qKR}. The quantum $Q$-system can be considered in the language of quantum cluster algebras\footnote{In cluster algebras, all inverses of the generators are adjoined by definition, hence the localization.}. In the current paper
we add a single, central coefficient $A$ to the cluster algebra of the quantum $Q$-system. The quantum cluster algebra includes the inverses of all the generators, by definition. With the addition of $\Delta$, the solutions of the quantum $Q$-system generate an algebra which is the localization of $\U_r$.

The quantum $Q$-system is a subalgebra of a quantum cluster algebra with trivial coefficients\footnote{To be precise, to compare with the usual definition of \cite{BernZel}, the generators are renormalized cluster variables}. We add a frozen variable $A$ as in the quiver illustrated in Figure \ref{fig:quiver}.
The ring over which the algebra is defined is $\Z_v=\Z[v,v^{-1}]$ where $v=q^{-1/(r+1)}$. We also adjoin the $(r+1)$-st root of the degree operator $\Delta$.
\begin{figure}
\centering
\includegraphics[width=12.cm]{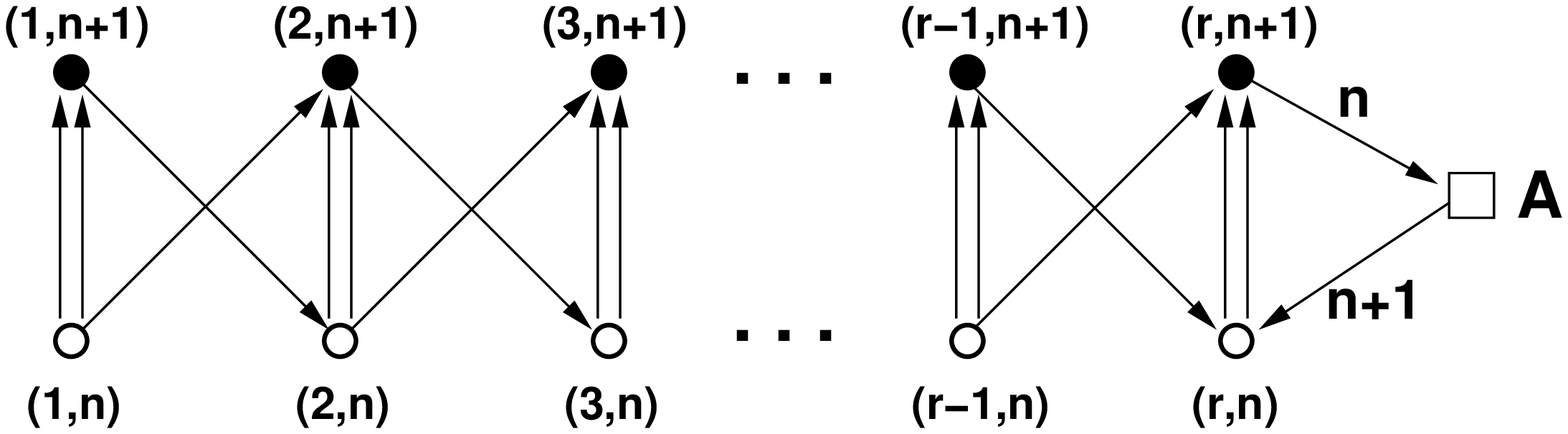}
\caption{\small }
\label{fig:quiver}
\end{figure}

Let $C$ be the Cartan matrix of the Lie algebra $\sl_{r+1}$, and define the integer matrix $\Lambda = (r+1) C^{-1}$. Explicitly,
\begin{equation}\label{lambda}
\Lambda_{\al,\beta} = \min(\al,\beta)(r+1-\max(\al,\beta)).
\end{equation}

Define the generators
\begin{equation}\label{QfromM}
\Q_{\al,n} = v^{-d_{\al}}M_{\al,n} \Delta^{-\frac{\al}{r+1}},
\quad  \al\in [0,r+1],n\in\Z
\end{equation}
where 
$$
d_{\al}=\frac{\Lambda_{\al,\al}}{2} n+ \sum_{\beta}\Lambda_{\al,\beta}=\frac{\Lambda_{\al,\al}}{2}(n+r+1).
$$
We use the convention $\Lambda_{0,\beta}=\Lambda_{r+1,\beta}=0$ for all $\beta$ so that $\Q_{0,n}=1$ and $\Q_{r+2,n}=0$ for all $n$.

\begin{lemma}
Relations \eqref{Msys}, \eqref{Mcom} and \eqref{Ur} imply that the generators $\Q_{\al,n}$ satisfy the relations
\begin{eqnarray}
v^{-\Lambda_{\al,\al}} \Q_{\al,n+1} \Q_{\al,n-1} &=& \Q_{\al,n}^2 - q\Q_{\al+1,n}\Q_{\al-1,n},\quad \al\in[1,r],n\in\Z\label{Qsys}\\
\Q_{\al,n} \Q_{\beta,n+1} &=&  v^{\Lambda_{\al,\beta}}\Q_{\beta,n+1}\Q_{\al,n}, \quad \al,\beta\in [0,r+1], n\in \Z,\nonumber\label{Qcommutation}\\
\Q_{r+2,n}&=&0. \nonumber \label{Qboundary}
\end{eqnarray}
\end{lemma}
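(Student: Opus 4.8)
The plan is to treat this as a change of generators: substitute the definition \eqref{QfromM} into each proposed relation, push every power of $\Delta$ to the right using the grading, and check that the $M$-relations \eqref{Msys}, \eqref{Mcom}, \eqref{Ur} produce exactly the stated prefactors. The one structural tool I would use repeatedly is the normal-ordering rule: since $M_{\al,n}$ is homogeneous of degree $\al n$, equation \eqref{DeltaM} gives $\Delta^{s}M_{\al,n}=q^{s\al n}M_{\al,n}\Delta^{s}$ for any $s\in\frac{1}{r+1}\Z$. This lets me reduce any quadratic monomial in the $\Q$'s to a scalar times a monomial of the form (monomial in the $M$'s)$\cdot\Delta^{\bullet}$, after which the two sides of each relation can be compared coefficient by coefficient.

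For the commutation relation I would write $\Q_{\al,n}\Q_{\beta,n+1}$ and $\Q_{\beta,n+1}\Q_{\al,n}$ in this normal-ordered form. The prefactor in $v$ and the total power of $\Delta$, namely $\Delta^{-(\al+\beta)/(r+1)}$, are identical for the two orderings, so they cancel in the ratio; the difference comes only from (i) the power of $q$ picked up when $\Delta^{-\al/(r+1)}$ (resp.\ $\Delta^{-\beta/(r+1)}$) is commuted across $M_{\beta,n+1}$ (resp.\ $M_{\al,n}$), and (ii) the factor $q^{\min(\al,\beta)}$ coming from \eqref{Mcom} with $\epsilon=1$. Collecting these, using $q=v^{-(r+1)}$ and the explicit formula \eqref{lambda} for $\Lambda$, yields the claimed commutation relation. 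The boundary relation $\Q_{r+2,n}=0$ is immediate from \eqref{Ur}, since $\Q_{r+2,n}$ is a scalar multiple of $M_{r+2,n}$.

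For the $M$-system relation \eqref{Qsys} I would expand the three products $\Q_{\al,n+1}\Q_{\al,n-1}$, $\Q_{\al,n}^2$, and $\Q_{\al+1,n}\Q_{\al-1,n}$; the grading guarantees that each carries the same factor $\Delta^{-2\al/(r+1)}$, which I factor out. Substituting \eqref{Msys} in the form $M_{\al,n}^2=q^{\al}M_{\al,n+1}M_{\al,n-1}+M_{\al+1,n}M_{\al-1,n}$ into $\Q_{\al,n}^2$ produces two terms, one proportional to $M_{\al,n+1}M_{\al,n-1}$ and one to $M_{\al+1,n}M_{\al-1,n}$. The heart of the computation is then to match prefactors: equating the $M_{\al,n+1}M_{\al,n-1}$-terms produces the coefficient $v^{-\Lambda_{\al,\al}}$, while the requirement that the $M_{\al+1,n}M_{\al-1,n}$-terms cancel against $q\,\Q_{\al+1,n}\Q_{\al-1,n}$ fixes the explicit factor $q$. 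Both reductions rest on two elementary identities for $d_\al$ regarded as a function of $(\al,n)$: that it is affine-linear in $n$, so that $d_{\al,n+1}+d_{\al,n-1}=2d_{\al,n}$, and that the discrete second difference of $\Lambda_{\al,\al}$ in $\al$ equals $-2$, so that $2d_{\al,n}-d_{\al+1,n}-d_{\al-1,n}=n+r+1$.

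I expect the only real work to be the scalar bookkeeping of the previous paragraph: it is routine but exponent- and sign-sensitive, so I would organize it entirely around the two $d_\al$-identities above together with the single substitution $q=v^{-(r+1)}$. This is precisely the point where the quadratic form $\Lambda$ and the shift by $r+1$ conspire to convert the $M$-system into the renormalized $Q$-system, and it is where an error is most likely to creep in, so I would verify the two coefficient identities on their own before assembling the final relation.
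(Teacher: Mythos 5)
Your plan is correct, and it is essentially the only reasonable one: the paper states this lemma without proof, treating it as a routine change-of-generators verification, so there is no alternative argument to compare against. The two scalar identities you isolate are exactly the right organizing principle: $d_{\al,n+1}+d_{\al,n-1}=2d_{\al,n}$ (linearity in $n$) and $2d_{\al,n}-d_{\al+1,n}-d_{\al-1,n}=n+r+1$ (second difference of $\Lambda_{\al,\al}$ equal to $-2$), and carrying out the computation confirms they do the job. One completeness remark: in the verification of \eqref{Qsys} the $n$-dependent powers of $q$ produced by commuting $\Delta^{-\al/(r+1)}$ across the second $M$-factor are \emph{not} the same for the three products (they are $q^{-\al^2(n-1)/(r+1)}$, $q^{-\al^2 n/(r+1)}$ and $q^{-(\al^2-1)n/(r+1)}$ respectively), and it is only their combination with your two $d_\al$-identities that reproduces the clean factors $q^{\al}$ and $q$; your opening normal-ordering rule covers this, but your narrative for \eqref{Qsys} mentions only the $d_\al$ bookkeeping, so make sure both sources of $q$-powers are assembled together.

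One caution on the endgame you describe as ``yields the claimed commutation relation'': with the paper's stated conventions ($v=q^{-1/(r+1)}$, hence $q^{1/(r+1)}=v^{-1}$), the collected exponent is $q^{\min(\al,\beta)-\al\beta/(r+1)}=q^{\Lambda_{\al,\beta}/(r+1)}=v^{-\Lambda_{\al,\beta}}$, i.e.\ the \emph{opposite} sign of the exponent displayed in the lemma, whereas the factor $v^{-\Lambda_{\al,\al}}$ in \eqref{Qsys} comes out exactly as stated. This is a sign inconsistency internal to the paper's statement (or to the definition of $v$), not a flaw in your method, but since you have not yet executed the bookkeeping you should expect to land on $v^{-\Lambda_{\al,\beta}}$ and should not force the computation to match the printed sign.
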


We identify the central element $A$ as the frozen variable in the quantum cluster algebra as in Figure \ref{fig:quiver}. Noting that this quiver corresponds to the cluster variables 
$$
\bx_n=(\Q_{1,n},\cdots,\Q_{r,n}| \Q_{1,n+1},\cdots,\Q_{r,n+1} |A),$$
  we identify $\Q_{r+1,n}=A^n$ in Equations \eqref{Qsys}, each of which is then identified (up to a normalization of the cluster variables) with a mutation in the quantum cluster algebra. 

\begin{remark}
The boundary condition $\Q_{r+1,n}=A^n$ is a generalization of the boundary condition $\Q_{r+1,n}=1$ used in \cite{qKR}. Both are consistent with the less restrictive boundary condition $\Q_{r+2,n}=0$. The former corresponds to the choice $\Q_{r+1,0}=1$ and $\Q_{r+1,1}=A$. The introduction of a coefficient $A$ is necessary in this paper in order for the algebra to have a $\Z$-grading.
\end{remark}

Apart from the frozen variable $A$, this system was called the $A_r$ quantum Q-system in \cite{DFKnoncom,qKR}. 
It is the quantum version of the $A_r$ Q-system cluster algebra identified in \cite{Ke07}.

The quantum cluster algebra structure allows to extend straightforwardly the commutation relations
\eqref{Mcom} to the following:
\begin{lemma}\label{farcommutation}
\begin{equation}\label{comM}
M_{\al,n} M_{\beta,n+p} = q^{p\min(\al,\beta)} M_{\beta,n+p}M_{\al,n}, \quad n,p\in\Z,\ |p|\leq |\beta-\al|+1.
\end{equation}
\end{lemma}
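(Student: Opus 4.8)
The plan is to prove \eqref{comM} by induction on $|p|$, taking the two relations contained in \eqref{Mcom} (the cases $\epsilon=0,1$) as the base of the induction. First I would record that the asserted identity is symmetric under interchanging the two factors: written as $M_{\alpha,n}M_{\beta,m}=q^{(m-n)\min(\alpha,\beta)}M_{\beta,m}M_{\alpha,n}$ with $m=n+p$, it is literally the same equation as \eqref{comM} for the pair $(\beta,m),(\alpha,n)$ with time separation $n-m=-p$, and the range condition is unchanged. This lets me assume $p\ge 0$ throughout, so that $M_{\beta,n+p}$ is the generator carried at the later time, and reduces everything to an induction on $p$. Note that for $p\ge 2$ the hypothesis $p\le|\beta-\alpha|+1$ forces $\alpha\ne\beta$.

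For the inductive step I would use the $M$-system \eqref{Msys} to trade the ``far'' generator $M_{\beta,n+p}$ for generators at earlier times. Setting $m=n+p$ and applying \eqref{Msys} at index $\beta$ centered at time $m-1$ gives
$$q^{\beta}M_{\beta,m}M_{\beta,m-2}=M_{\beta,m-1}^2-M_{\beta+1,m-1}M_{\beta-1,m-1},$$
and hence, working in the localization $\U^{\loc}$ where the cluster variables are invertible,
$$M_{\beta,m}=q^{-\beta}\bigl(M_{\beta,m-1}^2-M_{\beta+1,m-1}M_{\beta-1,m-1}\bigr)M_{\beta,m-2}^{-1}.$$
Every factor on the right now sits at time $m-1$ or $m-2$, i.e.\ at separation $p-1$ or $p-2$ from $n$, so I can commute $M_{\alpha,n}$ past each of them using the induction hypothesis (for the inverse, by inverting the known commutation of $M_{\alpha,n}$ with $M_{\beta,m-2}$), collect the powers of $q$, and read off the claim.

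The heart of the matter, and the step I expect to be the main obstacle, is the bookkeeping that shows (i) each intermediate commutation is genuinely licensed by the induction hypothesis, and (ii) the accumulated $q$-powers assemble into exactly $q^{p\min(\alpha,\beta)}$. For (i) one must check that separation $p-1$ stays in range for each index appearing, namely $\beta$ and $\beta\pm1$; the binding constraint comes from $M_{\beta-1,m-1}$, which requires $p-1\le|\beta-1-\alpha|+1$, and this is precisely where the tight bound $p\le|\beta-\alpha|+1$ is consumed (edge cases where $\beta-1=0$ or $\beta+1=r+2$ give $M_{0,\cdot}=1$ or, in $\U_r$, a vanishing factor, and are harmless). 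For (ii), carrying $M_{\alpha,n}$ through $M_{\beta,m-1}^2$ and through $M_{\beta,m-2}^{-1}$ yields $q^{(2(p-1)-(p-2))\min(\alpha,\beta)}=q^{p\min(\alpha,\beta)}$, while in the cross term the exponents $\min(\alpha,\beta+1)$ and $\min(\alpha,\beta-1)$ produced by $M_{\beta\pm1,m-1}$ sum to $2\min(\alpha,\beta)$ — the $\pm1$ discrepancies cancelling between the two factors — so both monomials acquire the identical factor $q^{p\min(\alpha,\beta)}$ and the linear combination passes through cleanly. The two sub-cases $\beta>\alpha$ and $\beta<\alpha$ differ only in how the $\min$'s are evaluated, the cancellation being uniform, so the induction closes; finally, since \eqref{comM} is an identity among the $M_{\alpha,n}$ themselves, it holds already in $\U$ and descends to the quotient $\U_r$.
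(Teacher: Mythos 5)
Your proof is correct, but it takes a genuinely different route from the paper's. The paper does not re-derive \eqref{comM} from scratch: it invokes Lemma 3.2 of \cite{qKR}, which establishes the analogous commutation for the cluster variables $\Q_{\al,n}$ from the quantum cluster algebra structure (the condition $|p|\leq|\beta-\al|+1$ is exactly the condition that the two variables lie in a common cluster, where commutation is part of the compatible-pair data), and then transfers the statement to the $M$'s via \eqref{QfromM}, the exponent $\min(\al,\beta)$ arising from the identity $\Lambda_{\al,\beta}+\al\beta=(r+1)\min(\al,\beta)$. You instead run a self-contained induction on $p\geq 0$ (after the correct observation that the $p<0$ case is the same identity read backwards), using \eqref{Msys} centered at time $m-1$ to express $M_{\beta,m}$ through generators at separations $p-1$ and $p-2$. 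Your bookkeeping checks out: since $p\geq 2$ forces $\al\neq\beta$, one has $\min(\al,\beta+1)+\min(\al,\beta-1)=2\min(\al,\beta)$ in both sub-cases, so each monomial acquires $q^{(2(p-1)-(p-2))\min(\al,\beta)}=q^{p\min(\al,\beta)}$, and the binding range constraint ($p-1\leq|\beta-1-\al|+1$ when $\beta>\al$, resp. $p-1\leq|\beta+1-\al|+1$ when $\beta<\al$) is exactly saturated by the hypothesis $p\leq|\beta-\al|+1$. What your argument buys is independence from the cluster-algebra machinery of \cite{qKR}: the commutation is exhibited as a formal consequence of \eqref{Msys} and \eqref{Mcom} alone. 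What it costs is that, because you invert $M_{\beta,m-2}$, you literally prove the identity in the localization $\U^{\loc}$ rather than in $\U'$ itself; this matches how the Lemma is used in the paper (and the paper's own route through the cluster algebra has the same feature), but it is a hypothesis worth stating explicitly.
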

\begin{proof}
The proof of the Lemma for the variables $Q$ is given in Lemma 3.2 of \cite{qKR}. Then using the relation of $M$ and $Q$ variables gives \eqref{comM} directly. The condition on $p$ is that the corresponding quantum cluster variables belong to a common cluster. The appearance of the quantity $\min(\al,\beta)$ is due to the relation
$\Lambda_{\al,\beta}+\al \beta=(r+1)\min(\al,\beta)$.
\end{proof}



The $\Z$-grading of the algebra $\U_r$ is inherited by the quantum $Q$-system variables, with $\deg \Q_{\al,n}=\al n$. Since $\Q_{r+1,n}=A^n$, we have $\deg A = r+1$, namely
\begin{equation}\label{degA}
\Delta\, A= q^{r+1} A\, \Delta ,\end{equation}
and
\begin{equation}\label{bcm}
M_{r+1,n} = A^n \Delta.
\end{equation}

As a consequence of \eqref{DeltaM} and the fact that $\Q_{r+1,n}$ commutes with all $\Q_{\al,n}$,
\begin{equation}
A\, M_{\al,n} =q^{-\al} M_{\al,n}\, A \label{AMcom}
\end{equation}

We will work with the generators $M_{\al,n}$ in this paper instead of $\Q_{\al,n}$ for convenience 
(they generate the same algebra). 
In what follows, we will consider the localization of this algebra obtained by adjoining inverses of the generators.

\subsection{Generating functions}
We want to present the relation of the algebra $U$ to a loop algebra, so we introduce generating functions, or currents.
 Denote the generating functions
\begin{equation}\label{mcurrents}
{\mathfrak m}_\al(z) := \sum_{n\in\Z} M_{\al,n} z^n.
\end{equation}
The subset of generators $\{M_n:=M_{1,n}: n\in \Z\}$ plays a special role in the presentation which follows, and we introduce the notation
\begin{equation}\label{mcurrent}
\m(z) := \m_1(z).
\end{equation}

Equations \eqref{Msys}, \eqref{Mcom} and \eqref{Ur} in terms of currents as follows.
Let $\delta(z)$ denote the distribution
\begin{equation}\label{delta}
\delta(z):=\sum_{n\in\Z}z^n=\frac{z^{-1}}{1-z^{-1}}+\frac{1}{1-z} 
\end{equation}
where the first fraction is understood as a power series in $z^{-1}$ and the second fraction is expanded in powers of $z$\footnote{
This is a delta function in the sense that
$\oint \frac{du}{2i\pi u} f(u)\delta(u/z)=f(z)$, where the contour integral
picks out the constant term of the current $f(u)\delta(u/z)$.}.

Define the ``constant term" notation
\begin{equation}\label{CTdef}
CT_{u_1,...,u_n}\left(f(u_1,...,u_n)\right)=\oint \frac{du_1}{2i\pi u_1}\cdots \oint \frac{du_n}{2i\pi u_n}
f(u_1,...,u_n)\end{equation}
for the multiple constant term in variables $u_1,...,u_n$.

 In terms of generating functions,
the $M$-system equations \eqref{Msys} are equivalent to the relations
\begin{equation}\label{Msysprime}
CT_{u,v}\left( \left(\left(1-q^\al \frac{v}{u}\right){\mathfrak m}_\al(u){\mathfrak m}_\al(v)
-{\mathfrak m}_{\al+1}(u){\mathfrak m}_{\al-1}(v)\right)\delta(u v/z)\right)=0,\quad 1\leq \al\leq r+1,
\end{equation}
in that the coefficient of $z^n$ in this equation, for each $n$ and $\al$, gives one of the relations \eqref{Msys}. 

Similarly, the commutation relations \eqref{Mcom} are components of the relation
\begin{equation}\label{Mcomprime}
CT_{u,v}\left(\left(\frac{1}{v^{\epsilon}}{\mathfrak m}_\al(u){\mathfrak m}_\beta(v)
-q^{\epsilon{\rm Min}(\al,\beta)}\frac{1}{u^{\epsilon}}{\mathfrak m}_\beta(u){\mathfrak m}_\al(v)\right)\delta(u v/z)\right)=0,
\quad |\epsilon|\leq 1.
\end{equation}
The grading relation \eqref{DeltaM} is equivalent to
$$
\Delta\, {\mathfrak m}_\al(z)\ =\ {\mathfrak m}_\al(q^{\al}z) \, \Delta, \qquad
(1\leq \al\leq r+1).
$$

\subsection{Alternative formulation of $\U_r$}
The main claim of this paper is that there is an alternative definition of the algebra $\U_r$ in terms of the subset of generators $M_{n}:=M_{1,n}$ or the currents $\m(z)$ and relations on them. We present in this subsection some of the main theorems to be proven.

Define the $q$-commutators
$$
[x,y]_q := xy-q yx, \quad x,y \in U.
$$

\begin{thm}
If $M_{n}$ are generators which satisfy both the $M$-system \eqref{Msys} and the commutation relations \eqref{Mcom}, then they satisfy the following exchange relations:
\begin{equation}\label{quadratic}
[M_{n},M_{n+p}]_q + [M_{n+p-1},M_{n+1}]_q = 0, \qquad n\in \Z, 1\leq p.
\end{equation}
\end{thm}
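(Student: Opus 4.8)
The plan is to repackage the whole family \eqref{quadratic} into a single identity for the currents $\m(z)$ of \eqref{mcurrent}, and then to extract that identity from the $M$-system and the commutation relations. Concretely, I set $\Phi(u,v):=(u-qv)\,\m(u)\m(v)$ and claim that \eqref{quadratic} is equivalent to the antisymmetry $\Phi(u,v)+\Phi(v,u)=0$, i.e. to the exchange relation
\[
(u-qv)\,\m(u)\m(v)=(qu-v)\,\m(v)\m(u).
\]
This is a purely formal check: expanding $\m(u)=\sum_a M_a u^a$ and $\m(v)=\sum_b M_b v^b$, the coefficient of $u^iv^j$ in $\Phi(u,v)+\Phi(v,u)$ is exactly $[M_{i-1},M_j]_q+[M_{j-1},M_i]_q$, which under the substitution $i=n+1$, $j=n+p$ is the left-hand side of \eqref{quadratic}. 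Since this coefficient is symmetric under $i\leftrightarrow j$, it suffices to treat $j\ge i$, i.e. $p\ge 1$, so the two formulations carry exactly the same content.

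The base case $p=1$ is immediate: there $i=j$ and the coefficient is $2[M_n,M_{n+1}]_q$, which vanishes by the commutation relation \eqref{Mcom} (the case $\epsilon=1$, $\al=\beta=1$). To reach larger $p$ I would bring in the next current through the $M$-system: specializing \eqref{Msys} to $\al=1$ gives $M_{2,m}=M_m^2-qM_{m+1}M_{m-1}$, while Lemma \ref{farcommutation} supplies the partial commutation $M_{2,m}M_{1,m+s}=q^{s}M_{1,m+s}M_{2,m}$ for $|s|\le 2$. Substituting the former into the latter and normal-ordering the resulting cubic monomials with the basic relations $M_mM_{m+1}=qM_{m+1}M_m$, one finds that the terms organize into a single $q$-bracket multiplied by an invertible generator, together with a near-diagonal remainder; cancelling the invertible factor yields precisely the relation \eqref{quadratic}. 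I have checked that this mechanism produces the cases $p=2$ and $p=3$ cleanly, the case $p=3$ already requiring $p=2$ in order to eliminate the intermediate distance-two product.

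The main obstacle is the general step. A single $M_{2,m}$ is supported on the three consecutive indices $m-1,m,m+1$, so commuting it past one generator within the allowed window $|s|\le 2$ can never produce the long product $M_nM_{n+p}$ once $p\ge 4$; the distance-$p$ datum lies outside the reach of one application of Lemma \ref{farcommutation}. There are two ways around this, and the second is the one I expect to match the constant-term technology of Section 4. The first is to run a genuine induction on $p$, in which every far product of distance strictly less than $p$ generated along the way is rewritten using the already-established instances of \eqref{quadratic}, leaving only $M_nM_{n+p}$ and near-diagonal terms. The second is to prove the current identity directly: rewriting the $M$-system \eqref{Msysprime} as $\m_2(z)=CT_{u,v}\big((1-q\tfrac{v}{u})\m(u)\m(v)\,\delta(uv/z)\big)$ and combining it with the current form of the far commutation between $\m_2$ and $\m$, one reduces the claim to a shuffle-type constant-term identity asserting that the symmetric part of $\Phi(u,v)$ is entirely accounted for by the commutation data and cancels, leaving the antisymmetric part alone. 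Verifying that cancellation — that no constraints beyond \eqref{quadratic} are forced and that those which appear are mutually consistent — is the delicate point, and it is where I expect the real work of the proof to lie.
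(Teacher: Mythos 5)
Your reduction to the current identity and your base cases are sound: $p=1$ is the commutation relation \eqref{Mcom}, and $p=2$ is indeed a combination of \eqref{Msys} and its left-multiplied form \eqref{leftM} at $\al=1$, exactly as in the paper. But the inductive step is where your argument stops being a proof, and you say so yourself. You correctly diagnose that normal-ordering $M_{2,m}$ against $M_{m+s}$ via Lemma \ref{farcommutation} only reaches $|s|\le 2$, so no amount of massaging the $\al=1,2$ relations produces the distance-$p$ commutator once $p\ge 4$; your first escape route (``rewrite every far product of distance $<p$ along the way'') has no engine, because the already-established instances of \eqref{quadratic} never generate a term containing $M_nM_{n+p}$ to begin with, and your second route is explicitly deferred. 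The missing ingredient is the integrable structure of Section 3: the conserved quantity $C_1$, built from the Miura operator, satisfies $[C_1,M_n]=(1-q)M_{n+1}$ (Corollary \ref{conscor}, eq.~\eqref{trans}), so $\mathrm{ad}_{C_1}$ raises indices by one. Setting $\phi_{n,\ell}:=[M_n,M_{n+\ell}]_q+[M_{n+\ell-1},M_{n+1}]_q$, one computes
\begin{equation*}
\Bigl[\tfrac{C_1}{1-q},\phi_{n,\ell}\Bigr]-\phi_{n+1,\ell-1}=\phi_{n,\ell+1},
\end{equation*}
so the vanishing of $\phi_{n,\ell'}$ for all $n$ and all $\ell'\le\ell$ immediately forces $\phi_{n,\ell+1}=0$. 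This is how the paper climbs from distance $\ell$ to distance $\ell+1$ without ever touching $M_{\al,n}$ for $\al\ge 2$ beyond the base case. Note that invoking $C_1$ is not free: it is a Laurent polynomial in the initial data, so one must work in the localization $\U_r^{\rm loc}$ and first establish Lemma \ref{ziplemma} and Theorem \ref{timeTranslation}; that chain of lemmas is precisely the ``real work'' your proposal anticipates but does not supply.
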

Note that this is a relation for the subset of generators $M_n=M_{1,n}$ only and does not involve $M_{\al,n}$ with $\al>1$.

For example, if $p=1$, this relation reduces to the commutation relation between $M_{1,n}$ and $M_{1,n+1}$. If $p=2$, this is a difference of \eqref{Msys} and its conjugate version 
 \eqref{leftM}, at shifted $n$.

The term exchange relation is justified because in terms of currents, Equation \eqref{quadratic} takes on the more familiar form
\begin{equation}
\label{current_quadratic}
(z-q w) \m(z) \m(w) + (w-qz) \m(w) m(z) = 0.
\end{equation}
Each of the relations \eqref{quadratic} is a coefficient of $z^{n-1} w^{n+p}$ of the relation \eqref{current_quadratic} for each $n,p\in \Z$.

Equation \eqref{current_quadratic} is the relation satisfied by the Drinfeld generators \cite{Drinfeld} of the nilpotent subalgebra $U_{\sqrt{q}}(\n[u,u^{-1}])$ of the quantum affine algebra $U_{\sqrt{q}}(\widehat{\sl}_2)$.
We note that this relation also appears in the context of the spherical Hall algebra \cite{spherical_hall}.

\subsection{Polynomiality of $M_{\al,n}$ in the generators}

The second important result is that all the generators which do not appear as part of the exchange relation \eqref{quadratic} are in the universal enveloping algebra generated by those that do.
The generators $M_{\al,n}$ with $\al>1$, which are a priori defined from \eqref{Msys}
 in terms of a localization of the algebra generated by components of the current $\m(z)$, are, in fact, polynomials in these generators. 
 
\begin{thm}
If the elements $M_{\al,n}$ satisfy the $M$-system \eqref{Msys} and the commutation relations \eqref{Mcom}, the following recursion relations are satisfied:
\begin{equation}\label{qcommInd}
(-1)^\al (q-1) M_{\al,n}=[M_{n-\al+1},M_{\al-1,n+1}]_{q^\al}, \qquad \al>1.
\end{equation}
\end{thm}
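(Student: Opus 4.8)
The plan is to prove \eqref{qcommInd} by induction on $\al$, treating it as a \emph{diagonal} identity rather than a symmetric constant-term relation in the currents. The two factors on the right, $M_{n-\al+1}=M_{1,n-\al+1}$ and $M_{\al-1,n+1}$, have index difference exactly $\al$, which is \emph{one beyond} the range $|p|\le|\beta-\al|+1=\al-1$ in which \eqref{comM} forces commutation. It is precisely the breakdown of commutativity at this borderline that should produce the new generator $M_{\al,n}$, and this is what makes induction on $\al$ the natural device: at each stage the relevant pair of generators is pushed one step outside the ``free'' cluster range.

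For the base case $\al=2$ I would use the already-established exchange relation \eqref{quadratic}. Taking $n\mapsto n-1$ and $p=2$ gives $[M_{n-1},M_{n+1}]_q+[M_n,M_n]_q=0$, hence $[M_{n-1},M_{n+1}]_q=(q-1)M_n^2$. Converting this $q$-commutator to a $q^2$-commutator via $[M_{n-1},M_{n+1}]_{q^2}=[M_{n-1},M_{n+1}]_q-(q^2-q)M_{n+1}M_{n-1}$ and inserting the $\al=1$ instance of \eqref{Msys}, namely $M_{2,n}=M_n^2-qM_{n+1}M_{n-1}$, yields $[M_{n-1},M_{n+1}]_{q^2}=(q-1)M_{2,n}$, which is \eqref{qcommInd} at $\al=2$.

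For the inductive step I would multiply the desired identity on the right by $M_{\al-2,n}$ and use the level-$(\al-1)$ instance of \eqref{Msys} to eliminate $M_{\al,n}$, replacing $M_{\al,n}M_{\al-2,n}$ by $M_{\al-1,n}^2-q^{\al-1}M_{\al-1,n+1}M_{\al-1,n-1}$. The claim becomes the equivalent identity
\[
(-1)^\al(q-1)\big(M_{\al-1,n}^2-q^{\al-1}M_{\al-1,n+1}M_{\al-1,n-1}\big)=[M_{n-\al+1},M_{\al-1,n+1}]_{q^\al}\,M_{\al-2,n}.
\]
Both sides now involve only $M_{1,\cdot}$, $M_{\al-1,\cdot}$ and $M_{\al-2,\cdot}$; replacing every factor $M_{\al-1,\cdot}$ by the inductive hypothesis expresses both sides as words in $M_{1,\cdot}$ and $M_{\al-2,\cdot}$ (two generators of each kind). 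One then normal-orders using \eqref{Mcom} and \eqref{comM}, and, for the level-one factors whose indices differ by $2$, the exchange relation \eqref{quadratic}; matching the two normal forms and cancelling the invertible right factor $M_{\al-2,n}$ in the localization gives \eqref{qcommInd}.

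The hard part will be the reordering bookkeeping in this last step. The generators being transposed sit exactly at the boundary of the range \eqref{comM}, so the commutations are not all free: each borderline transposition carries an extra $q$-power and leaves a residual term. These residual terms are what must recombine — through one further application of \eqref{Msys} or \eqref{quadratic} — into the combination $M_{\al-1,n}^2-q^{\al-1}M_{\al-1,n+1}M_{\al-1,n-1}$ on the left. Verifying that all cross terms cancel and that the surviving $q$-powers match is the crux of the argument; once that bookkeeping is settled, everything else is a formal manipulation of the relations assumed in the hypothesis.
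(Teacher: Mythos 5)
Your base case is fine, and induction on $\al$ is indeed the right device (it is also the paper's). But the inductive step has a genuine gap: the central computation is deferred to ``normal-ordering and matching,'' and the toolkit you propose for it --- \eqref{Mcom}, \eqref{comM}, and \eqref{quadratic} --- is not sufficient to perform that normal-ordering. After you substitute the inductive hypothesis $(-1)^{\al-1}(q-1)M_{\al-1,m}=[M_{m-\al+2},M_{\al-2,m+1}]_{q^{\al-1}}$ into $[M_{n-\al+1},M_{\al-1,n+1}]_{q^\al}M_{\al-2,n}$, the resulting words contain adjacent pairs such as $M_{n-\al+1}M_{\al-2,n+2}$ (index gap $\al+1$), $M_{n-\al+1}M_{\al-2,n}$ and $M_{n-\al+3}M_{\al-2,n+2}$ (gap $\al-1$), all of which lie strictly outside the range $|p|\le|\beta-\al|+1=\al-2$ in which \eqref{comM} applies; the same happens on the left once $M_{\al-1,n}^2$ is expanded (pairs $M_{n-\al+2}M_{\al-2,n+1}$, gap $\al-1$). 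For these pairs no commutation rule is available from the hypotheses --- indeed, producing a relation at exactly this borderline is the content of the theorem being proved --- so ``the two normal forms'' are not defined and cannot be matched. (The pairs $M_{\al-2,n+2}M_{\al-2,n}$ are likewise outside \eqref{Mcom} and would force you back through \eqref{Msys}, reintroducing level-$(\al-1)$ and level-$(\al-3)$ generators.) Saying each borderline transposition ``carries an extra $q$-power and leaves a residual term'' assumes a relation that you do not have.

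For contrast, the paper's inductive step is engineered precisely to dodge this. It proves the mirror identity $(-1)^\al(q-1)M_{\al,n}=[M_{\al-1,n-1},M_{n+\al-1}]_{q^\al}$ first, by showing that $M_{\al-1,n+1}\phi_{\al,n}+\phi_{\al-1,n+1}M_{\al,n}=0$, where $\phi_{\al,n}$ is the defect of that identity: the unknown terms cancel between the two summands, and after one use of \eqref{Msys} to replace $M_{\al-1,n+1}M_{\al-1,n-1}$ the surviving commutators are $[M_{\al-1,n}^2,M_{n+\al-1}]_{q^{2\al-2}}$ and $[M_{\al,n},M_{n+\al-1}]_{q^{\al-1}}$, both of which sit \emph{inside} the validity range of \eqref{comM} and hence vanish; invertibility of $M_{\al-1,n+1}$ in the localization then gives $\phi_{\al,n}=0$, and \eqref{qcommInd} follows by applying the time-reversal anti-automorphism $\tau$. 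If you want to salvage your route, you need either an auxiliary combination of defects with this cancellation property, or an independent proof of the commutation behaviour of the out-of-range pairs; as written, the crux of your argument is both unexecuted and unexecutable with the relations you have in hand.
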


In terms of currents, these recursion relations can be expressed as
\begin{equation}\label{alpharec}
\qquad {\mathfrak m}_\al(z)=CT_{u_1,u_2}\left(
\frac{u_1^{\al-1}u_2^{-1}{\mathfrak m}(u_1){\mathfrak m}_{\al-1}(u_2)
-q^\al u_1^{-1} u_2^{\al-1}{\mathfrak m}_{\al-1}(u_1){\mathfrak m}(u_2)}{(-1)^\al (q-1) }\, \delta(u_1u_2/z)\right).
\end{equation}

Moreover, we have the following
\begin{cor}
Equation \eqref{qcommInd} implies a polynomial expression for $M_{\al,n}$ in terms of the components of the current $\m(z)$:
\begin{equation}\label{firstMalpha}
(q-1)^{\al-1}M_{\al,n} = (-1)^{\al(\al-1)/2}[ \cdots [M_{n-\al+1},M_{n-\al+3}]_{q^2},M_{n-\al+5}]_{q^3}, \cdots, M_{n+\al-1}]_{q^\al}.
\end{equation}
\end{cor}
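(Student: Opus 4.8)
The goal is to prove Equation~\eqref{firstMalpha}, which expresses $(q-1)^{\al-1}M_{\al,n}$ as an iterated nested $q$-commutator of the $\al$ generators $M_{n-\al+1},M_{n-\al+3},\dots,M_{n+\al-1}$, assuming the recursion~\eqref{qcommInd} of the preceding theorem. Since \eqref{qcommInd} already reduces $M_{\al,n}$ to a single $q^\al$-commutator $[M_{n-\al+1},M_{\al-1,n+1}]_{q^\al}$ involving $M_{\al-1,\cdot}$, the natural plan is \emph{induction on $\al$}, unfolding this recursion repeatedly until only the base generators $M_{1,m}=M_m$ remain.

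First I would set up the induction: the base case $\al=1$ is trivial (the right side is just $M_n$, with empty commutator nesting), and for $\al=2$ one checks directly that \eqref{qcommInd} reads $-(q-1)M_{2,n}=[M_{n-1},M_{n+1}]_{q^2}$, matching \eqref{firstMalpha}. For the inductive step, I assume \eqref{firstMalpha} holds for $\al-1$, i.e.
\begin{equation}
(q-1)^{\al-2}M_{\al-1,n+1} = (-1)^{(\al-1)(\al-2)/2}[\cdots[M_{n-\al+3},M_{n-\al+5}]_{q^2},\dots,M_{n+\al-1}]_{q^{\al-1}},
\end{equation}
where I have substituted the index shift $n\mapsto n+1$ so that the innermost generator is $M_{(n+1)-(\al-1)+1}=M_{n-\al+3}$ and the outermost is $M_{(n+1)+(\al-1)-1}=M_{n+\al-1}$. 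I then insert this into \eqref{qcommInd}. Multiplying \eqref{qcommInd} by $(q-1)^{\al-2}$ gives $(-1)^\al(q-1)^{\al-1}M_{\al,n}=[M_{n-\al+1},(q-1)^{\al-2}M_{\al-1,n+1}]_{q^\al}$, and substituting the inductive expression produces
\begin{equation}
(-1)^\al(q-1)^{\al-1}M_{\al,n} = (-1)^{(\al-1)(\al-2)/2}\bigl[M_{n-\al+1},[\cdots[M_{n-\al+3},M_{n-\al+5}]_{q^2},\dots]_{q^{\al-1}}\bigr]_{q^\al}.
\end{equation}

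The bookkeeping is the only substantive part. I would verify two things carefully. The sign: the target exponent is $\al(\al-1)/2$, the inductive one is $(\al-1)(\al-2)/2$, and the overall prefactor $(-1)^\al$ from \eqref{qcommInd} must reconcile these, using $(-1)^{(\al-1)(\al-2)/2}(-1)^\al=(-1)^{\al(\al-1)/2}$, which follows from $\al(\al-1)/2-(\al-1)(\al-2)/2=\al-1$ together with $(-1)^{\al-1}=(-1)^{\al+1}$ compensating the leading $(-1)^\al$; this parity check is the most error-prone step. The index pattern: the newly adjoined outer generator is $M_{n-\al+1}$, which is exactly two less than the previous innermost index $M_{n-\al+3}$, so prepending it as the \emph{new} innermost generator extends the arithmetic progression $n-\al+1,n-\al+3,\dots,n+\al-1$ of common difference $2$ with the correct $\al$ terms, and the associated $q$-grading $q^\al$ on the outermost bracket matches the pattern $q^2,q^3,\dots,q^\al$.

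The main obstacle is the orientation of the nesting: the recursion \eqref{qcommInd} builds $M_{\al,n}$ by placing the \emph{new low-index} generator $M_{n-\al+1}$ on the \emph{outside} of the bracket, whereas \eqref{firstMalpha} is written with $M_{n-\al+1}$ on the \emph{inside}. I would resolve this by reading \eqref{firstMalpha} consistently so that ``innermost'' refers to $M_{n-\al+1}$ and confirming that a single application of \eqref{qcommInd} adjoins one generator per step in the direction the formula prescribes; once the conventions are aligned the induction closes immediately. No additional relations beyond \eqref{qcommInd} are needed, so this Corollary is genuinely a formal consequence of the preceding Theorem, and the entire proof amounts to the induction plus the sign and index verification just described.
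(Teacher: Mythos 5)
Your overall strategy --- induction on $\al$, unfolding the recursion one generator per step --- is exactly what the paper intends (its proof is the single sentence ``by iteration''), but two of your verifications do not actually go through. First, the sign check is arithmetically false. Your inductive step produces the recursion $S_\al=(-1)^\al S_{\al-1}$ with $S_1=1$, hence $S_\al=(-1)^{2+3+\cdots+\al}=(-1)^{\al(\al+1)/2-1}$, not $(-1)^{\al(\al-1)/2}$. Indeed, since $\al(\al-1)/2-(\al-1)(\al-2)/2=\al-1$, the identity you assert, $(-1)^{(\al-1)(\al-2)/2}(-1)^{\al}=(-1)^{\al(\al-1)/2}$, would require $(-1)^{\al}=(-1)^{\al-1}$; the parenthetical ``$(-1)^{\al-1}=(-1)^{\al+1}$'' compensates nothing, because $(-1)^{\al+1}=-(-1)^{\al}$. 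The two signs agree only for odd $\al$. In fact this exposes a sign slip in the printed statement itself: for $\al=2$, \eqref{qcommInd} gives $(q-1)M_{2,n}=+[M_{n-1},M_{n+1}]_{q^2}$, consistent with Example \ref{exfewms}, whereas \eqref{firstMalpha} would give $-[M_{n-1},M_{n+1}]_{q^2}$. You should report what the induction actually yields, namely the prefactor $(-1)^{\al(\al+1)/2-1}$, rather than force agreement with the printed exponent.

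Second, the bracketing issue you flag is real but your proposed resolution does not repair it. Iterating \eqref{qcommInd} (i.e.\ \eqref{rightmal}) yields the right-nested word in which the outermost bracket pairs the single generator $M_{n-\al+1}$ (on the left) against the entire nested remainder, whereas \eqref{firstMalpha} is the left-nested word whose outermost bracket pairs the nested remainder against the single generator $M_{n+\al-1}$ (on the right). These are distinct elements of the free algebra --- already for $\al=3$, $[[a,b]_{q^2},c]_{q^3}$ and $[a,[b,c]_{q^2}]_{q^3}$ differ in their middle monomials --- and they coincide only modulo the exchange relations \eqref{quadratic}. Consequently your induction, whose hypothesis is the left-nested form for $\al-1$, does not close: one application of \eqref{qcommInd} produces a mixed bracketing, not the left-nested form for $\al$, and ``re-reading'' the formula cannot change which word the recursion constructs. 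The clean fix is to run the identical induction on the companion identity \eqref{leftmal} of Theorem \ref{recurMalpha}, $(-1)^\al(q-1)M_{\al,n}=[M_{\al-1,n-1},M_{n+\al-1}]_{q^\al}$, which appends the new high-index generator on the outside right and reproduces the printed nesting verbatim, with the same sign recursion as above.
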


Therefore, the algebra $U$ is generated by the components of the single current $\m(z)$, all other generators being polynomials in these generators. 
The algebra $U_r$ is a quotient of this algebra.

It would seem that the polynomial expression for $M_{\al,n}$ of \eqref{firstMalpha} in the $M_n$'s has
coefficients in $\C(q)$, due to the denominators involving $(q-1)$. It turns out
that $M_{\al,n}$ also has a polynomial expression of the $M_n$'s, with coefficients in $\Z[q]$.

Indeed, modulo the exchange relations \eqref{quadratic}, the nested commutator expression \eqref{firstMalpha}
is equivalent to the following. In terms of the currents ${\mathfrak m}_\al(z)$,
and the slightly non-standard definition of $q$-vandermonde product:
\begin{equation}\label{qvander}
\Delta_q(u_1,...,u_\al)=\prod_{1\leq a<b\leq \al} \left(1-q\frac{u_b}{u_a}\right) ,
\end{equation}
we have:
\begin{thm}\label{vanderq}
The current ${\mathfrak m}_\al(z)$ is expressed in terms of the current ${\mathfrak m}(z)$ via the following 
``quantum determinant" constant term identity:
\begin{equation}\label{qdetone}
{\mathfrak m}_\al(z)=CT_{u_1,\cdots,u_\al}\left( \Delta_q(u_1,...,u_\al) 
\,\prod_{i=1}^\al {\mathfrak m}(u_i)\, \delta(u_1...u_\al/z)\right)\ .
\end{equation}
\end{thm}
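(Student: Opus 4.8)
The plan is to prove the quantum determinant identity \eqref{qdetone} by induction on $\al$, using the recursion \eqref{alpharec} from the previous theorem as the inductive step. The base case $\al=1$ is trivial, since $\Delta_q$ of a single variable is the empty product $1$ and $CT_{u_1}({\mathfrak m}(u_1)\delta(u_1/z))={\mathfrak m}(z)$. The inductive hypothesis will be that ${\mathfrak m}_{\al-1}(z)$ has the claimed form with $\al-1$ integration variables and the product $\Delta_q(u_1,\dots,u_{\al-1})$.

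First I would substitute the inductive expression for ${\mathfrak m}_{\al-1}$ into the two terms of \eqref{alpharec}. This converts the right-hand side into a constant-term expression in $\al$ variables: renaming, one term has $u_1^{\al-1}u_2^{-1}$ times ${\mathfrak m}(u_1)$ acting on $\Delta_q(u_2,\dots,u_\al)\prod_{i=2}^\al {\mathfrak m}(u_i)$, and the conjugate term has $u_1^{-1}u_2^{\al-1}$ with the roles of the first variable and the block reversed. The central mechanism is to collapse the nested delta functions $\delta(u_2\cdots u_\al/w)\,\delta(u_1 w/z)$ into a single $\delta(u_1\cdots u_\al/z)$ by integrating out the intermediate variable $w$; this is where the footnote property of $\delta$ (that $CT$ picks out the residue) is invoked. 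After this collapse, the two terms combine over the common denominator $(-1)^\al(q-1)$, and I must show that the resulting symmetrized kernel equals $\Delta_q(u_1,\dots,u_\al)$.

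The key combinatorial identity to establish is therefore that the antisymmetrized monomial prefactors, acting against the partial Vandermonde $\Delta_q(u_2,\dots,u_\al)$, reconstruct the full $\Delta_q(u_1,\dots,u_\al)$. Concretely, I expect the factor $\prod_{b=2}^\al\bigl(1-q\,u_b/u_1\bigr)$ (the new row of the Vandermonde) to emerge from the difference of the two monomial weights $u_1^{\al-1}u_2^{-1}$ and $q^\al u_1^{-1}u_2^{\al-1}$ once one exploits the $CT$-symmetry under permuting the integration variables $u_2,\dots,u_\al$ and the exchange relation \eqref{current_quadratic} (equivalently \eqref{quadratic}) to reorder the currents ${\mathfrak m}(u_i)$. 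Since the integrand is being integrated against a $\Delta_q$ that is not symmetric, I would symmetrize by averaging over the symmetric group $S_{\al-1}$ acting on $u_2,\dots,u_\al$, using that the product $\prod {\mathfrak m}(u_i)$ picks up controlled $q$-factors under transpositions via the exchange relation. The telescoping of these $q$-powers against the monomial prefactor should yield precisely the missing linear factors, completing the induction.

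The main obstacle will be the bookkeeping in this symmetrization step: the currents ${\mathfrak m}(u_i)$ do not commute, so reordering them inside the constant term produces rational factors $(u_a-qu_b)/(u_b-qu_a)$ from \eqref{current_quadratic}, and I must verify that these factors interact correctly with the $q$-Vandermonde $\Delta_q$ rather than spoiling it. The delicate point is that $\Delta_q(u_1,\dots,u_\al)$ as defined in \eqref{qvander} is ordered (only $1-q\,u_b/u_a$ with $a<b$ appears), so it is genuinely \emph{not} symmetric in the $u_i$; the asymmetry is exactly compensated by the noncommutativity of the currents, and the heart of the proof is to show this compensation is exact. I anticipate that the cleanest route is to prove an auxiliary lemma stating that for any rational kernel $K(u_1,\dots,u_\al)$, the constant term $CT\bigl(K\prod_i {\mathfrak m}(u_i)\bigr)$ depends only on the image of $K$ under a suitable ``$q$-symmetrization'' operator determined by \eqref{quadratic}, and then to check that the kernel arising from the recursion and the kernel $\Delta_q$ have the same image under this operator.
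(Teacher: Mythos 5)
Your skeleton --- induction on $\al$ with \eqref{alpharec} as the inductive step, a trivial base case at $\al=1$, and a reduction of the whole problem to showing that two kernels agree ``modulo the exchange relations'' --- is exactly the strategy of the paper, and your proposed auxiliary lemma is, in essence, the paper's Lemma \ref{skelem}: for any Laurent polynomial $P(u,v)$ \emph{symmetric} in $(u,v)$, one has $CT_{u,v}\left((u-qv)\,P(u,v)\,{\mathfrak m}(u){\mathfrak m}(v)\,\delta(uv/z)\right)=0$, because the integrand is skew under $u\leftrightarrow v$ while the constant term is invariant under relabelling. The essential feature of that lemma is that the skew factor $(u-qv)$ sits in the \emph{numerator}, so the kernel remains a Laurent polynomial and its constant term is unambiguous.

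The gap is twofold. First, the concrete mechanism you propose for matching the kernels --- averaging over $S_{\al-1}$ and reordering the currents, picking up factors $(u_a-qu_b)/(u_b-qu_a)$ --- is both unnecessary and hazardous: those factors are genuinely rational, so the constant term of the resulting symmetrized kernel depends on a choice of expansion, and nothing in your sketch controls that choice. It is also unnecessary because, after substituting the inductive hypothesis into \eqref{alpharec} and renaming variables, \emph{both} terms already carry the currents in the standard order ${\mathfrak m}(u_1)\cdots{\mathfrak m}(u_\al)$ (in one term the extra current sits in front of the $(\al-1)$-block, in the other behind it); no reordering of currents is ever performed in the paper's argument. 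Second, and more seriously, the verification that the kernel produced by the recursion equals $(1-q)\Delta_q(u_1,\dots,u_\al)$ modulo Lemma \ref{skelem} is the heart of the proof, and in your plan it is asserted rather than carried out. What the paper actually needs is: an alternative form of the quantum determinant (Lemma \ref{otherqdet}, obtained by shifting the monomial down the chain of variables to produce the prefactor $(-u_1)^{\al-1}/(u_2\cdots u_\al)$); the splitting $(1-q){\mathfrak r}_\al={\mathfrak r}_\al-q\,{\mathfrak r}_\al$ using the two different forms; telescoping expansions of the ratios $\Delta_q[1,\al]/\Delta_q[2,\al]$ and $\Delta_q[1,\al]/\Delta_q[1,\al-1]$ in which all but one term is annihilated by Lemma \ref{skelem}; and a final application of the same shifting lemma with the rational series $f(x)=1/(1-q\,u_\al/x)$ to cancel the two surviving contributions. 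Your expectation that the new Vandermonde row simply ``emerges from the difference of the two monomial weights by telescoping $q$-powers'' does not match this computation, and without it the induction does not close.
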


Section 4 of this paper is devoted to the proof of the equivalence of the two presentations of the algebra $U_r$.
\begin{thm}\label{Main}
The algebra $U_r$ generated by $\{M_{\al,n}, \al\in 1,r+1, n\in \Z\}$ subject to the $M$-system relations \eqref{Msys}, the commutation relations \eqref{Mcom} and the rank restriction \eqref{Ur}, is isomorphic to the algebra generated by $\{M_n, n\in\Z\}$ subject to the exchange relations \eqref{current_quadratic} and the rank restriction \eqref{Ur}, expressed by using 
\eqref{qdetone} as definition of the $M_{\al,n}$'s.
\end{thm}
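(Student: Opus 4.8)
The plan is to realize the two presentations as a pair of mutually inverse algebra homomorphisms that fix the distinguished generators $M_n=M_{1,n}$. Write $\U_r$ for the algebra of the first presentation (generators $M_{\al,n}$ modulo the $M$-system \eqref{Msys}, the commutation relations \eqref{Mcom}, and the rank restriction \eqref{Ur}), and $\cA_r$ for the algebra of the second presentation (generators $M_n$ modulo the exchange relations \eqref{current_quadratic} and the rank restriction \eqref{Ur}, with the $M_{\al,n}$ defined through the quantum determinant \eqref{qdetone}). One direction is already in hand: in $\U_r$ the elements $M_n$ satisfy \eqref{current_quadratic} by the exchange-relation theorem of this section, while Theorem \ref{vanderq} shows that each $M_{\al,n}$ equals the constant term \eqref{qdetone}; specializing this to $\al=r+2$ turns \eqref{Ur} into the rank restriction defining $\cA_r$. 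Hence $M_n\mapsto M_{1,n}$ extends to a homomorphism $\cA_r\to\U_r$, and it is surjective because the Corollary giving \eqref{firstMalpha} shows that $\U_r$ is generated by the $M_n$.

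The substance of the theorem is the reverse homomorphism $\U_r\to\cA_r$, sending each generator $M_{\al,n}$ to the element of $\cA_r$ defined by \eqref{qdetone}. For this to be well defined I must verify that, working purely in $\cA_r$ -- that is, assuming only the exchange relations \eqref{current_quadratic} on $\m(z)$ -- the quantum-determinant currents $\m_\al(z)$ satisfy the $M$-system in its constant-term form \eqref{Msysprime} and the commutation relations \eqref{Mcomprime}; the rank restriction then holds by the very definition of $\cA_r$. This is precisely the constant-term computation advertised in the introduction, and it is the main obstacle.

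To carry it out I would first record the shuffle-algebra reformulation of \eqref{current_quadratic}: the exchange relation says exactly that swapping two adjacent currents $\m(u_a)\m(u_b)\to\m(u_b)\m(u_a)$ multiplies the integrand by the rational factor $(q u_a-u_b)/(u_a-q u_b)$, up to the pole prescription carried by the $\delta$-functions. Since $\Delta_q(u_1,\dots,u_\al)$ of \eqref{qvander} is the canonical $q$-deformed Vandermonde weight, the product $\m_\al(u)\m_\beta(v)$ can be assembled into a single $(\al+\beta)$-fold constant term whose integrand carries the two block Vandermondes; the exchange relations then supply exactly the cross factors needed to complete these to the full $q$-Vandermonde on the merged alphabet, rendering the integrand symmetric. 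The commutation relation \eqref{Mcomprime} follows by relabelling the two blocks of variables and tracking the power of $q$ produced by the factors that cross between blocks, which is where the exponent $\min(\al,\beta)$ emerges -- reproducing \eqref{Mcom} and, as a byproduct, the far-commutation of Lemma \ref{farcommutation}. The $M$-system \eqref{Msysprime} is the genuinely delicate point: it is a quadratic Desnanot--Jacobi/Plücker-type identity relating $\m_\al$ to $\m_{\al\pm1}$, which I would establish by a residue/partial-fraction manipulation of the integrands, reducing both sides to the same symmetric constant term in $2\al$ variables, with the $q$-Vandermonde weights arranged so that the cross terms cancel.

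Granting this verification, both homomorphisms are the identity on the generating set $\{M_n\}$, which generates $\cA_r$ by construction and $\U_r$ by \eqref{firstMalpha}; hence the two maps are mutually inverse and the presentations are isomorphic, which is the assertion of Theorem \ref{Main}. I expect the hardest step to be the quadratic identity \eqref{Msysprime}: unlike the commutation relations, it is not a bookkeeping of $q$-powers but a true cancellation identity, and making the pole prescriptions of the nested $\delta$-functions conspire correctly with the $q$-Vandermonde weights is the crux.
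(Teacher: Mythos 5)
Your architecture is exactly the paper's: the forward map uses the already-established exchange relations and Theorem \ref{vanderq}, surjectivity comes from \eqref{firstMalpha}, and the whole content of the theorem is reduced to showing that, assuming only \eqref{current_quadratic}, the quantum-determinant currents \eqref{qdetone} satisfy the $M$-system \eqref{Msysprime} and the commutation relations \eqref{Mcomprime}. That reduction is precisely the paper's Theorem \ref{goback}. The problem is that this verification \emph{is} the proof --- it occupies the bulk of Section \ref{proofsec} --- and you leave it as a plan (``I would establish by a residue/partial-fraction manipulation\dots''). So the essential step is missing.

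Moreover, the mechanism you sketch for \eqref{Mcomprime} would not work as stated. You claim the exchange relations ``supply exactly the cross factors needed to complete these to the full $q$-Vandermonde on the merged alphabet, rendering the integrand symmetric.'' But $\Delta_q(u_1,\dots,u_n)\prod_i\m(u_i)$ is \emph{not} symmetric: the exchange relation only says that $(u-qv)P(u,v)\m(u)\m(v)$ has vanishing constant term when $P$ is symmetric (Lemma \ref{skelem}), which is a statement about killing individual monomial contributions, not about symmetrizing the whole integrand. The paper's actual argument compares the two block products $\Delta_q[1,\al]\Delta_q[\al+1,\al+\beta]$ and $\Delta_q[1,\beta]\Delta_q[\beta+1,\al+\beta]$ through their l.c.m.\ $\Delta_q[1,\al+\beta]\big/\prod_{i\le\al,\,j>\beta}\bigl(1-q\tfrac{u_j}{u_i}\bigr)$, expands the residual cross factors into Laurent monomials, and uses Corollary \ref{coruseful} to show that all terms but $k=0$ (and, for $\epsilon=1$, a single boundary term $k=\al$ handled by \eqref{posbound}--\eqref{negbound}) vanish; the exponent $q^{\min(\al,\beta)\epsilon}$ is produced by that surviving boundary term, not by bookkeeping of crossing factors. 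Likewise, statement (i) is proved not by a Pl\"ucker/Desnanot--Jacobi manipulation but by evaluating the single product $\Delta_q[1,\al+1]\Delta_q[\al+1,2\al]$ in two different ways and matching the surviving contributions \eqref{finalone} and \eqref{finaltwo}. Your framing of what must be proved is correct, but the proof of it is absent, and the one concrete idea you offer for carrying it out rests on a false symmetry claim.
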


That is, $U_r$ is isomorphic to the quotient of $U_{\sqrt{q}}(\n_+[u,u^{-1}])$ by the two-sided ideal generated by the relation \eqref{Ur}.


The proof relies on the existence of commuting Hamiltonians of the $M$-system (conserved quantities of the $Q$-system). We have previously derived these in the quotient $\U_r$ only and they are used in the derivation of the result. However, the resulting relations are independent of $r$ and hold in the algebra $\U$ as well.

\section{Conserved quantities}
In this section, all relations are considered in the localization $\U_r^{\rm loc}$.

We recall that the quantum $Q$-system can be regarded as a discrete integrable equation with conserved quantities which are Laurent polynomials in the generators \cite{DFKnoncom}.

Thus, the relations \eqref{Msys}, regarded as relations in $\U_r^{\rm loc}$, can be considered to be a discrete evolution of the variables $M_{\al,n}$ in the discrete ``time" variable $n$. When regarded in this way, the equation is a discrete integrable system, and there are $r$ algebraically independent elements of $U_r^{\rm loc}$, Laurent polynomials in any initial data, which are independent of $n$. Here, we reformulate the discrete integrable structure 
first established for the classical $Q$-system in \cite{DFK10} and then extended to the quantum case in \cite{DFKnoncom}.

\subsection{Miura operator and quantum conserved quantities}

Let $D$ denote the automorphism of $\U_r$ acting as single time step shift operator,
\begin{equation}\label{defD}
D (M_{\al,n}) = M_{\al,n+1},\quad \al\in [1,r+1] ,
\end{equation}
extended to an action on $\U_r^{\rm loc}$.  Define the monomials
$$
\xi_{\al,n} = M_{\al,n} M_{\al,n+1}^{-1}, \qquad x_{\al,n} = \xi_{\al,n}^{-1} \xi_{\al-1,n}.
$$
In particular, $\xi_{0,n}=1$ and $\xi_{r+1,n} = A^{-1}.$ Consider the following element, acting on $\U^{\rm loc}_r$:
\begin{equation}\label{miura}
\mu_n =( D-x_{r+1,n}) (D-x_{r,n}) \cdots (D-x_{1,n}).
\end{equation}
We call this the Miura operator.
We claim that $\mu_n=\mu_{n-1}$ and hence is independent of $n$. This proved using the following Lemma, which uses M-system \eqref{Msys} and the commutation relations \eqref{Mcom} (but not the rank restrictions \eqref{Ur}):
\begin{lemma}\label{ziplemma}
For all $\al\in [1,r]$ and $n\in \Z$,
\begin{equation}\label{zipper}
(D-x_{\al+1,n})(D-\xi_{\al,n}^{-1}\xi_{\al-1,n-1})= (D-\xi_{\al+1,n}^{-1}\xi_{\al,n-1})(D-x_{\al,n-1}).
\end{equation}
\end{lemma}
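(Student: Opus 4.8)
The plan is to verify the operator identity \eqref{zipper} by expanding both sides as quadratic polynomials in the shift operator $D$ and matching the coefficients of $D^2$, $D^1$, and $D^0$ separately. Since $D$ is an algebra automorphism satisfying $D\, x = D(x)\, D$ for any $x\in\U_r^{\loc}$, I would first record that $D\,\xi_{\al,n} = \xi_{\al,n+1}\,D$ and $D\,x_{\al,n} = x_{\al,n+1}\,D$, so that moving $D$ past a monomial simply shifts its second index. With this bookkeeping in hand, the $D^2$ coefficients on both sides are trivially equal to $1$, so the identity reduces to two scalar-coefficient equations in the algebra.

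First I would expand the left-hand side: $(D-x_{\al+1,n})(D-\xi_{\al,n}^{-1}\xi_{\al-1,n-1})$ produces a $D^1$-coefficient of $-x_{\al+1,n} - \xi_{\al,n+1}^{-1}\xi_{\al-1,n}$ (using $D\,\xi_{\al,n}^{-1}\xi_{\al-1,n-1} = \xi_{\al,n+1}^{-1}\xi_{\al-1,n}\,D$) and a $D^0$-coefficient of $x_{\al+1,n}\,\xi_{\al,n}^{-1}\xi_{\al-1,n-1}$. Similarly, the right-hand side $(D-\xi_{\al+1,n}^{-1}\xi_{\al,n-1})(D-x_{\al,n-1})$ gives a $D^1$-coefficient of $-\xi_{\al+1,n}^{-1}\xi_{\al,n-1} - x_{\al,n}$ and a $D^0$-coefficient of $\xi_{\al+1,n}^{-1}\xi_{\al,n-1}\,x_{\al,n-1}$. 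Matching the $D^0$ terms is then a matter of substituting the definitions $x_{\al,n}=\xi_{\al,n}^{-1}\xi_{\al-1,n}$ and checking that the two monomials in the $\xi$'s coincide after using the commutation relations among the $M_{\al,n}$; this should be a direct (if careful) computation.

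The main obstacle will be matching the $D^1$ coefficients, which is where the $M$-system relation \eqref{Msys} must enter. The required identity is
\begin{equation}
x_{\al+1,n} + \xi_{\al,n+1}^{-1}\xi_{\al-1,n} \;=\; \xi_{\al+1,n}^{-1}\xi_{\al,n-1} + x_{\al,n},
\end{equation}
and rewriting each $x$ and $\xi$ in terms of the $M_{\al,n}$ and their inverses turns this into a relation that I expect to be equivalent to \eqref{Msys} (or its shifted/rearranged form) after clearing denominators. The delicate points are (i) keeping track of the order of non-commuting factors, since the $\xi_{\al,n}$ and their inverses do not commute and the commutation relations \eqref{Mcom} and their extension in Lemma \ref{farcommutation} introduce powers of $q$; and (ii) ensuring that the powers of $q$ produced by these reorderings cancel exactly so that the $M$-system relation applies on the nose. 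I would therefore reduce the $D^1$ identity to a statement purely about $M_{\al-1,n}, M_{\al,n}, M_{\al,n+1}, M_{\al+1,n}$ at a single site, verify that the $q$-factors from commutation match the factor $q^\al$ in \eqref{Msys}, and conclude. Because the identity uses only \eqref{Msys} and \eqref{Mcom} and not the rank restriction, the lemma holds in $\U^{\loc}$ as well, which is consistent with the claimed range $\al\in[1,r]$ and with the subsequent telescoping argument showing $\mu_n=\mu_{n-1}$.
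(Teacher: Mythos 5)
Your proposal is correct and follows essentially the same route as the paper: expand both sides in powers of $D$, observe that the $D^2$ and $D^0$ coefficients agree (the latter by direct cancellation $\xi\xi^{-1}=1$, both sides giving $\xi_{\al+1,n}^{-1}\xi_{\al-1,n-1}$), and reduce the lemma to the $D^1$-coefficient identity $x_{\al+1,n}+\xi_{\al,n+1}^{-1}\xi_{\al-1,n}=\xi_{\al+1,n}^{-1}\xi_{\al,n-1}+x_{\al,n}$, which is exactly the identity the paper isolates. The one step you leave as an expectation --- that this follows from \eqref{Msys} and \eqref{Mcom} --- is carried out in the paper by computing $\xi_{\al,n}-\xi_{\al,n-1}$ from the ``left'' form \eqref{leftM} and $\xi_{\al,n}^{-1}-\xi_{\al,n+1}^{-1}$ from \eqref{Msys} itself, then multiplying by $\xi_{\al+1,n}^{-1}$ on the left and $\xi_{\al-1,n}$ on the right and matching the resulting monomials via the commutation relations; the ordering and $q$-power issues you flag are precisely where the care is needed, and the computation does close.
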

\begin{proof}
This is a quadratic equation in $D$. The coefficients of $D^2$ on both sides of \eqref{zipper}
are 1, and the coefficients of $D^0$ are equal to $\xi_{\al+1,n}^{-1} \xi_{\al,n-1}$ 
on both sides by definition of $x_{\al,n}$.

Identifying the coefficients of $D$ is equivalent to the following equation:
$$
\xi_{\al+1,n}^{-1}\left( \xi_{\al,n} - \xi_{\al,n-1}\right)=\left(\xi_{\al,n}^{-1}-\xi_{\al,n+1}^{-1}  \right)\xi_{\al-1,n}.
$$
This follows from \eqref{Msys}, which implies the ``left" M-system:
\begin{equation}\label{leftM}
q^{-\al} M_{\al,n-1}M_{\al,n+1} = M_{\al,n}^2-q^{-1} M_{\al+1,n}M_{\al-1,n}.
\end{equation}
obtained by multiplying \eqref{Msys} from the left by $M_{\al,n-1}$
and from the right by $M_{\al,n-1}^{-1}$, and further simplifying by use of the commutation relations \eqref{Mcom}.
Multiplying \eqref{leftM} from the left by $M_{\al,n}^{-1}$ and from the right by $M_{\al,n+1}^{-1}$, 
using the commutation relations \eqref{Mcom} and rearranging terms, this is equivalent to:
\begin{equation}\label{xione}
\xi_{\al,n}-\xi_{\al,n-1} = q^{-1} M_{\al+1,n} M_{\al,n}^{-1} M_{\al-1,n} M_{\al,n+1}^{-1}.
\end{equation}
On the other hand, the equation \eqref{Msys} can be written as
$$
M_{\al,n+1}^2-q^\al M_{\al,n+2}M_{\al,n} = M_{\al+1,n+1}M_{\al-1,n+1}.
$$
Multiplying from the left by $M_{\al,n+1}^{-1}$ and from the right by $M_{\al,n}^{-1}$, and using the commutation relations, this is:
\begin{equation}\label{xitwo}
\xi^{-1}_{\al,n}-\xi^{-1}_{\al,n+1} = M_{\al,n+1}^{-1} M_{\al+1,n+1} M_{\al-1,n} M_{\al,n}^{-1}.
\end{equation}
Finally, the desired equality is obtained by multiplying \eqref{xione} from the left by 
$\xi_{\al+1,n}^{-1}=M_{\al+1,n+1}M_{\al+1,n}^{-1}$, and \eqref{xitwo} from the right by 
$\xi_{\al-1,n}=M_{\al-1,n}M_{\al-1,n+1}^{-1}$, and using the commutation relations to see that they are equal.
\end{proof}

\begin{thm} The operator $\mu_n$ is independent of $n$. That is, $\mu_n=\mu_{n-1}=\mu$.
\end{thm}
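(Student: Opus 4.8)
The plan is to use Lemma \ref{ziplemma} as a ``zipper'' move that transports the discrete time shift $n\to n-1$ one factor at a time through the ordered, noncommuting product defining $\mu_n$. Since $\mu_n$ is an ordered product of linear factors in $D$ and the factors do not commute, the whole argument must respect the ordering; the virtue of the lemma is that it acts only on an adjacent pair of factors, so the noncommutativity is handled locally at each step.

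First I would introduce the auxiliary monomials $y_{\al,n}:=\xi_{\al,n}^{-1}\xi_{\al-1,n-1}$ and observe that $\xi_{\al+1,n}^{-1}\xi_{\al,n-1}=y_{\al+1,n}$, so that Lemma \ref{ziplemma} reads, for $1\le \al\le r$,
\[
(D-x_{\al+1,n})(D-y_{\al,n}) = (D-y_{\al+1,n})(D-x_{\al,n-1}).
\]
In this form the lemma takes a factor $(D-y_{\al,n})$ sitting to the right of $(D-x_{\al+1,n})$ and slides it to the left, at the cost of lowering the time index of the right factor from $n$ to $n-1$ and raising the index of the left factor by one. Next I would record the two boundary identities that let the zipper start and terminate. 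At the bottom, $\xi_{0,n}=1$ gives $x_{1,n}=\xi_{1,n}^{-1}=y_{1,n}$; at the top, $\xi_{r+1,n}=A^{-1}$ is independent of $n$, so $y_{r+1,n}=A\,\xi_{r,n-1}=x_{r+1,n-1}$.

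Then I would carry out the telescoping. Starting from $\mu_n=(D-x_{r+1,n})\cdots(D-x_{1,n})$, replace the rightmost factor $(D-x_{1,n})$ by $(D-y_{1,n})$ using the bottom identity, apply the zipped lemma with $\al=1$ to the last two factors to produce $(D-y_{2,n})(D-x_{1,n-1})$, then apply it with $\al=2$ to the pair $(D-x_{3,n})(D-y_{2,n})$, and so on up to $\al=r$. After these $r$ moves every factor except the leftmost carries time index $n-1$, and the leftmost factor is $(D-y_{r+1,n})=(D-x_{r+1,n-1})$ by the top identity. Hence $\mu_n=(D-x_{r+1,n-1})\cdots(D-x_{1,n-1})=\mu_{n-1}$, which is the claim.

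The main obstacle is the bookkeeping: one must verify that the ``$y$'' created on the left by one move is precisely the ``$y$'' consumed on the right by the following move, so that the $r$ applications of the lemma chain together without gaps. Once the auxiliary variables $y_{\al,n}$ and the two boundary identities are in place, this matching is immediate and the zipper runs mechanically, requiring no further input from the $M$-system beyond Lemma \ref{ziplemma} itself.
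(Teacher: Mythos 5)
Your proof is correct and is essentially the paper's own ``zipper argument'': the auxiliary variables $y_{\al,n}=\xi_{\al,n}^{-1}\xi_{\al-1,n-1}$ are exactly the intermediate factors the paper produces at each application of Lemma \ref{ziplemma}, and the two boundary identities ($\xi_{0,n}=1$ at the bottom, $\xi_{r+1,n}=A^{-1}$ at the top) are the same ones the paper invokes to start and terminate the telescoping. Your explicit bookkeeping of the $y$'s is a slightly cleaner packaging of the identical argument.
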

\begin{proof}
Using Lemma \ref{ziplemma}, we have a ``zipper argument" as follows. Start with
\begin{eqnarray*}
(D-x_{2,n})(D-x_{1,n}) &=& (D-x_{2,n})(D-\xi_{1,n}^{-1}\xi_{0,n})\\ &=&
(D-x_{2,n})(D-\xi_{1,n}^{-1}\xi_{0,n-1}) \\ &=& (D-\xi_{2,n}^{-1}\xi_{1,n-1})(D-x_{1,n-1}).
\end{eqnarray*}
Here, we made use of the fact that $\xi_{0,n}=\xi_{0,n-1}=1$ and applied Lemma \ref{ziplemma}. 
We then use Lemma \ref{ziplemma} iteratively to update each of the subsequent terms, and for any fixed positive $r$, this ends with the updated term:
\begin{eqnarray*}
(D-\xi_{r+1,n}^{-1}\xi_{r,n-1})=(D-\xi_{r+1,n-1}^{-1}\xi_{r,n-1})=(D-x_{r,n-1}),
\end{eqnarray*}
because $\xi_{r+1,n} = A^{-1}=\xi_{r+1,n-1}.$
Therefore, $\mu_n=\mu_{n-1}$.
\end{proof}

\begin{example}\label{exAone}
In the case of $A_1$, $r=1$, we have $\xi_n:=\xi_{1,n}=M_nM_{n+1}^{-1}$, 
$\xi_{2,n}=M_{2,n}M_{2,n+1}^{-1}=A^{-1}$, $x_{1,n}=\xi_n^{-1}$, $x_{2,n}=A \xi_n$, hence:
$$\mu=(D-A\xi_n)( D-\xi_n^{-1})=D^2-C_1 D+A$$
where $C_1=A \xi_n+\xi_{n+1}^{-1}$.
\end{example}

Note the following simple commutation relations.
\begin{lemma}\label{commlem} We have the commutation relations:
\begin{eqnarray}
\xi_{\al,n}\, \xi_{\beta,n+p}&=&\xi_{\beta,n+p}\, \xi_{\al,n}\qquad (|p|\leq |\al-\beta|)\label{xico}\\
x_{\al,n}\, x_{\beta,n+p}&=&x_{\beta,n+p}\, x_{\al,n}\qquad (|p|\leq {\rm Max}(|\al-\beta|-1,0))\label{xco}\\
M_n \,\xi_{\beta,n+p}&=&q^{-1}\xi_{\beta,n+p}\, M_n\qquad (-\beta\leq p\leq \beta-1)\label{comxi}
\end{eqnarray}
\end{lemma}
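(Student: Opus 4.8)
The final statement to prove is Lemma \ref{commlem}, the collection of three commutation relations among the monomials $\xi_{\al,n}$, $x_{\al,n}$, and the generators $M_n$.

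My plan is to derive all three relations directly from the far-commutation relation \eqref{comM} of Lemma \ref{farcommutation}, since the $\xi$ and $x$ variables are built from ratios $M_{\al,n}M_{\al,n+1}^{-1}$ and products thereof. First I would establish \eqref{comxi}, the relation between $M_n=M_{1,n}$ and $\xi_{\beta,n+p}=M_{\beta,n+p}M_{\beta,n+p+1}^{-1}$. Writing $\xi_{\beta,n+p}$ out and applying \eqref{comM} to commute $M_n$ past each of $M_{\beta,n+p}$ and $M_{\beta,n+p+1}^{-1}$, I would track the powers of $q$: since $\al=1$ here, each such move contributes $q^{\pm\min(1,\beta)}=q^{\pm1}$, and I would check that the factor from passing $M_{\beta,n+p}$ and the factor from passing $M_{\beta,n+p+1}^{-1}$ combine to give the net $q^{-1}$. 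The range constraint $-\beta\leq p\leq \beta-1$ is exactly what guarantees that both of the two applications of \eqref{comM} (one at shift $p$, one at shift $p+1$) satisfy the validity condition $|p'|\leq|\beta-1|+1=\beta$, so the constraint on $p$ is forced by the constraint on \eqref{comM}.

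For \eqref{xico}, I would similarly expand $\xi_{\al,n}\xi_{\beta,n+p}=M_{\al,n}M_{\al,n+1}^{-1}M_{\beta,n+p}M_{\beta,n+p+1}^{-1}$ and commute the two $\beta$-factors to the left past the two $\al$-factors using \eqref{comM}. The key observation is that the four resulting powers of $q$ cancel in pairs: commuting $M_{\beta,n+p}$ past $M_{\al,n}$ versus past $M_{\al,n+1}$ produces opposite signs of $\min(\al,\beta)$ times the relevant shift, and similarly for $M_{\beta,n+p+1}^{-1}$, so the total exponent is zero and the monomials commute. The condition $|p|\leq|\al-\beta|$ ensures each individual application of \eqref{comM} is in its valid range. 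The relation \eqref{xco} for the $x_{\al,n}=\xi_{\al,n}^{-1}\xi_{\al-1,n}$ then follows from \eqref{xico} by writing each $x$ as a product of two $\xi$'s and checking that all the pairwise $\xi$-commutations are permitted under the stated (slightly tighter, because of the shift by the index $\al-1$) range $|p|\leq\mathrm{Max}(|\al-\beta|-1,0)$.

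The main obstacle I anticipate is purely bookkeeping: correctly collecting the powers of $q$ in \eqref{comxi} so that the net exponent is exactly $-1$ rather than $0$ or $-2$, and making sure the endpoint cases of the $p$-range (e.g.\ $p=-\beta$ and $p=\beta-1$) still satisfy the validity hypothesis of \eqref{comM} after the shift to $n+p+1$. For the $\xi$ and $x$ relations the exponents must cancel completely, so the delicate point is verifying that the shifts $n,n+1$ in $\xi_{\al,n}$ interact with $n+p,n+p+1$ in $\xi_{\beta,n+p}$ so that the two nonzero contributions are genuinely equal and opposite; I would organize this by fixing without loss of generality $\al\leq\beta$ (so $\min(\al,\beta)=\al$) and checking the four moves explicitly, then noting symmetry in $\al\leftrightarrow\beta$.
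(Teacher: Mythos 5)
Your proposal is correct and follows essentially the same route as the paper: expand $\xi$ and $x$ into products of $M_{\al,n}^{\pm1}$, apply the far-commutation relation \eqref{comM} factor by factor, verify that the $q$-exponents cancel (for \eqref{xico}, the paper records this as $q^{\al(p-(p+1)-(p-1)+p)}=q^0$) or sum to $-1$ (for \eqref{comxi}), and observe that the stated $p$-ranges are exactly the intersections of the validity conditions of \eqref{comM} for each pairwise move. The only difference is cosmetic ordering of the three items.
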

\begin{proof}
Using \eqref{comM}, we deduce easily that for $\al<\beta$ and $|p|\leq \beta-\al$:
\begin{eqnarray*}\xi_{\al,n}\, \xi_{\beta,n+p}&=&M_{\al,n}M_{\al,n+1}^{-1}M_{\beta,n+p}M_{\beta,n+p+1}^{-1}\\
&=&
q^{\al(p-(p+1)-(p-1)+p)}M_{\beta,n+p}M_{\beta,n+p+1}^{-1}M_{\al,n}M_{\al,n+1}^{-1}=\xi_{\beta,n+p}\,
\xi_{\al,n}
\end{eqnarray*}
which implies \eqref{xico}. The relation for $x_{\al,n}=\xi_{\al,n}^{-1}\xi_{\al-1,n}$ follows immediately.
Finally \eqref{comxi} follows from \eqref{comM}, while the range of validity corresponds to 
both $|p|\leq \beta$ and $|p+1|\leq \beta$.
\end{proof}

The conservation of the Miura operator $\mu_n=\mu$ is best expressed via the expansion
\begin{equation}\label{expanmiu}
\mu_n=\mu=\sum_{m=0}^{r+1} (-1)^m C_m\, D^{r+1-m}=\prod_{\al=r+1}^1 (D-x_{\al,n})
\end{equation}
The conserved quantities $C_m$ are expressed as homogeneous polynomials of the $x$'s of total degree $m$.
Since the Miura operator \eqref{miura} is independent of $n$, we can write it in terms of the variables in the limit when $n$ 
tends to infinity. In \cite{qKR} (Lemma 5.14), we showed that the limit $\Q_{\al,n}\Q_{\al,n+1}^{-1}$ exists when $n\to\infty$. Therefore, so do the limits
$$
\xi_\al:=\lim_{n\to\infty} \xi_{\al,n}, \quad x_\al := \lim_{n\to\infty} x_{\al,n}\ .
$$
Moreover, in the limit, the variables $\{x_1,...,x_r\}$ commute among themselves as a consequence 
of Lemma \ref{commlem}
and commute with $D$ as well. On the other hand, since $\mu_n$ is independent of $n$, we have
$$
\mu =\lim_{n\to\infty} \mu_n = (D-x_{r+1}) \cdots (D-x_1).
$$
Comparing this with the expansion $\mu=\sum_{m=0}^{r+1}(-1)^m C_{m}\, D^{r+1-m}$, we can identify $C_m$ with the elementary symmetric functions in the commuting variables $\{x_\al\}$:
\begin{equation}\label{elementary}
C_{m} = e_{m}(x_1,...,x_{r+1}).
\end{equation}
In particular, we have
\begin{lemma}\label{commutaC} The conserved quantities commute among themselves:
\begin{equation}
C_m\, C_p=C_p\, C_m\qquad (m,p=0,1,...,r+1)\label{integC}.
\end{equation}
\end{lemma}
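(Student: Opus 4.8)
The plan is to prove that the conserved quantities $C_m$ commute among themselves by exploiting the identification \eqref{elementary} of each $C_m$ with an elementary symmetric function $e_m(x_1,\dots,x_{r+1})$ in the limiting variables $x_\al = \lim_{n\to\infty} x_{\al,n}$. Since the $C_m$ are, by construction \eqref{expanmiu}, independent of $n$, it suffices to establish the commutation relation in the $n\to\infty$ limit, where the situation simplifies dramatically. The key observation recorded just before the statement is that the limiting variables $\{x_1,\dots,x_{r+1}\}$ all commute among themselves, as a consequence of the commutation relation \eqref{xco} in Lemma \ref{commlem}: for the $x_{\al,n}$ the allowed range of $p$ is $|p|\le \max(|\al-\beta|-1,0)$, which is generically a nontrivial range, but in the limit $n\to\infty$ the index shift by a fixed $p$ becomes immaterial and the range obstruction disappears, so all the $x_\al$ pairwise commute.

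First I would make precise the claim that the $x_\al$ form a commuting family. For $\al\ne\beta$ one applies \eqref{xco} with $p=0$ directly to get $x_{\al,n}x_{\beta,n}=x_{\beta,n}x_{\al,n}$ whenever $|\al-\beta|\ge 1$, which holds for all distinct $\al,\beta$; passing to the limit preserves this. The only potentially delicate pairing is within a single index, but $x_\al$ trivially commutes with itself. Hence $\{x_1,\dots,x_{r+1}\}$ is a genuinely commuting set of elements in the (limit of the) localized algebra.

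Second, once the $x_\al$ commute, the elementary symmetric functions $e_m(x_1,\dots,x_{r+1})$ live in the commutative subalgebra they generate, and elements of a commutative subalgebra commute with one another. Therefore $C_m C_p = e_m e_p = e_p e_m = C_p C_m$ for all $m,p\in\{0,1,\dots,r+1\}$. Since the $C_m$ are independent of $n$ and the relation \eqref{integC} is an identity of $n$-independent elements, establishing it in the limit establishes it as an identity in $\U_r^{\rm loc}$.

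The main obstacle is really a matter of justification rather than computation: one must be careful that the limit $\xi_\al=\lim_{n\to\infty}\xi_{\al,n}$ genuinely exists in a suitable completion or topology so that the limiting $x_\al$ are well-defined elements on which algebraic relations may be read off. This existence was established in \cite{qKR} (Lemma 5.14) for the ratios $\Q_{\al,n}\Q_{\al,n+1}^{-1}$, and the passage to the $M$-variables via \eqref{QfromM} is harmless. One should also confirm that commutation relations survive the limit, i.e. that if $x_{\al,n}x_{\beta,n}=x_{\beta,n}x_{\al,n}$ for all large $n$ then the same holds for the limits, which is immediate once the limits exist as algebra elements. With these points in place the argument is short, the real content having been front-loaded into \eqref{elementary} and Lemma \ref{commlem}.
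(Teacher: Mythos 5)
Your proposal is correct and follows essentially the same route as the paper: the lemma is stated there as an immediate consequence of the identification $C_m=e_m(x_1,\dots,x_{r+1})$ with the commuting limiting variables $x_\al=\lim_{n\to\infty}x_{\al,n}$, whose pairwise commutativity comes from Lemma \ref{commlem} (indeed already the $p=0$ case of \eqref{xco} suffices, since $0\le\max(|\al-\beta|-1,0)$ for all $\al,\beta$). Your added care about the existence of the limits and the survival of relations under the limit is exactly the justification the paper delegates to Lemma 5.14 of \cite{qKR}.
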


Finally, note that 
$C_m$ has degree $m$ with respect to the $\Z$-grading of $\U_r$, 
as $\xi_{\al,n}$ all have degree $-\al$ and $x_{\al,n}$, hence $x_\al$, all have degree $1$.
That is,
\begin{equation}
\Delta\, C_m = q^m\, C_m \, \Delta \quad (m=0,1,...,r+1)\label{comCdelt}.
\end{equation}



\subsection{A linear recursion relation}

When the Miura operator acts on $\U_r$ by left-multiplication, it gives rise to a linear recursion relation among the generators, which is a characteristic of integrability.
\begin{lemma} \label{linreclemma}
The generators $M_n := M_{1,n}$ satisfy a linear recursion relation with coefficients which are independent of $n$:
\begin{equation}\label{consar}
\sum_{j=0}^{r+1} (-1)^j C_j \, M_{m-j} = 0.
\end{equation}
\end{lemma}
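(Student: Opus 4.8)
The plan is to show that the Miura operator $\mu$ annihilates the generator $M_{1,n}=M_n$ under the natural difference-operator action, and then to read off \eqref{consar} simply by expanding $\mu$ in powers of $D$. Concretely, I would regard $\U_r^{\loc}$ as a left module over the skew (difference-operator) ring generated by $\U_r^{\loc}$ and $D^{\pm1}$, where an element $a\in\U_r^{\loc}$ acts by left multiplication and $D$ acts as the shift automorphism \eqref{defD}; the two actions are compatible because conjugation of $a$ by $D$ is again the shift of $a$. In this language $\mu$ is a genuine element of the ring (it is $n$-independent by the preceding theorem), so applying it to any $M_{1,k}$ is well defined.

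The crux is the one-line observation that the rightmost factor of $\mu$ already kills $M_{1,n}$. Indeed, by definition $x_{1,n}=\xi_{1,n}^{-1}\xi_{0,n}=\xi_{1,n}^{-1}=M_{1,n+1}M_{1,n}^{-1}$, so that $x_{1,n}M_{1,n}=M_{1,n+1}=D\,M_{1,n}$, whence
\[
(D-x_{1,n})\,M_{1,n}=0.
\]
Since $\mu=\mu_n=(D-x_{r+1,n})\cdots(D-x_{1,n})$ has $(D-x_{1,n})$ as its rightmost factor, I would conclude that $\mu\,M_{1,n}=0$.

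It then remains to expand $\mu$ and relabel indices. Using the expansion \eqref{expanmiu}, namely $\mu=\sum_{m=0}^{r+1}(-1)^m C_m D^{r+1-m}$ with the $C_m$ independent of $n$, together with $D^{r+1-m}M_{1,n}=M_{1,n+r+1-m}$, the identity $\mu\,M_{1,n}=0$ reads
\[
\sum_{m=0}^{r+1}(-1)^m\,C_m\,M_{1,n+r+1-m}=0.
\]
Setting $m'=n+r+1$ and renaming $m\to j$ gives exactly \eqref{consar}; since $n$, hence $m'$, is arbitrary, the recursion holds for all indices, and the coefficients are $n$-independent precisely because $\mu_n=\mu$ was already established.

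The only genuine point requiring care — and the step I expect to be the main, if minor, obstacle — is the bookkeeping that makes ``$\mu$ acts by left multiplication'' a consistent, $n$-independent operation: one must check that computing $\mu\,M_{1,n}$ from the factored form at index $n$ agrees with computing it from the expanded form $\sum(-1)^m C_m D^{r+1-m}$, i.e. that the conserved quantities $C_m$ sit to the \emph{left} of the shifted generators as in \eqref{consar}. This will be immediate once the module convention above is fixed, because the factored and expanded forms are equal as elements of the difference-operator ring and therefore act identically on $M_{1,n}$.
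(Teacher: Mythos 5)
Your proof is correct and follows essentially the same route as the paper: the paper likewise observes that the rightmost factor $(D-x_{1,n})=D-M_{n+1}M_n^{-1}$ annihilates $M_n$, concludes $\mu(M_n)=\mu_n(M_n)=0$, and reads off \eqref{consar} from the expansion \eqref{expanmiu}. Your extra care about the difference-operator module convention is a sound (if implicit in the paper) clarification, not a deviation.
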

\begin{proof}
Consider  $\mu(M_n)=\mu_n(M_n)$. Noting that the rightmost factor of $\mu_n$ is $(D-x_{1,n})= D-\xi_{1,n}^{-1} = D-M_{n+1}M_n^{-1}$, we conclude that $\mu(M_n)=0$.
\end{proof}

\begin{remark}
The construction of our Miura operator is very similar to the so-called Miura transformation allowing for defining generators of the q-deformed W-algebra in terms of quantum group generators \cite{FR95} (See e.g. Eq. (10.9).) . 
\end{remark}

\subsection{Conserved quantities as Hamiltonians}

\begin{figure}
\includegraphics[width=8.cm]{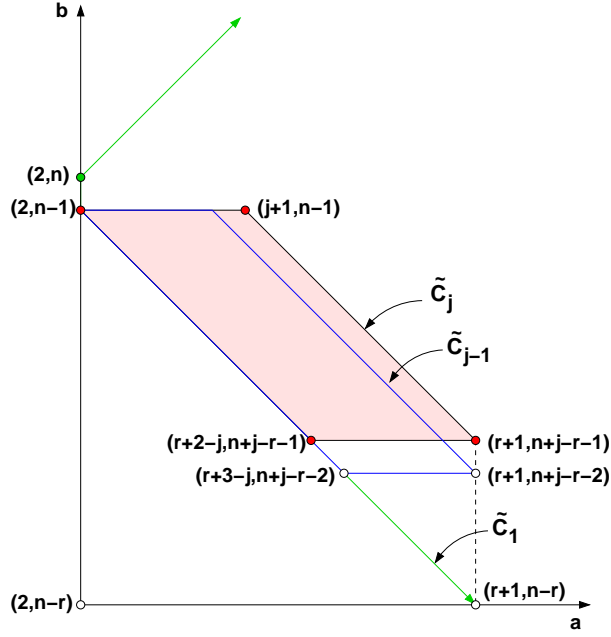}
\caption{\small The domains $\tilde{\rm C}_j$ and $\tilde{\rm C}_{j-1}$ of indices ot the $x$'s that contribute to
$\tilde{C}_{j,n+j-r-1}$ and $\tilde{C}_{j-1,n+j-r-1}$ respectively. The larger wedge corresponds
to indices $a=\al$, $b=n+p$ such that $x_{a,b}$ commutes with $M_n$, and contains the former.}\label{fig:rangecomm}
\end{figure}

We now consider commutations relations in the algebra which we interpret as the integrable structure. 
The conserved quantities $\{C_m\}$ play the role of Hamiltonians in $\U_r$, in the sense that they act by time translation on the generators $\{M_n=M_{1,n}, n\in\Z\}$.
\begin{thm}\label{timeTranslation}
Let $m\in [1,r]$. Then 
\begin{equation}\label{commuCm}
[C_m, M_{n}] = (q-1) \sum_{j=1}^m (-1)^j C_{m-j}M_{n+j}.
\end{equation}
\end{thm}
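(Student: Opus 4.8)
The plan is to establish the commutation relation $[C_m, M_n] = (q-1)\sum_{j=1}^m (-1)^j C_{m-j}M_{n+j}$ by exploiting the description of the conserved quantities $C_m = e_m(x_1,\dots,x_{r+1})$ as elementary symmetric functions in the commuting limit variables $x_\al$, together with the linear recursion relation of Lemma \ref{linreclemma}. The first thing I would do is record precisely how $M_n$ fails to commute with each factor $x_{\al,n+p}$ in the finite-$n$ presentation of $\mu_n$. The key input is the commutation relation \eqref{comxi} of Lemma \ref{commlem}, namely $M_n\,\xi_{\beta,n+p} = q^{-1}\xi_{\beta,n+p}\,M_n$ for $-\beta \le p \le \beta-1$, from which one deduces the commutation of $M_n$ with $x_{\al,b}=\xi_{\al,b}^{-1}\xi_{\al-1,b}$ in the appropriate range of indices (the wedge $\tilde{\rm C}_j$ depicted in Figure \ref{fig:rangecomm}). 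I expect that $M_n$ $q$-commutes with $x_{\al,b}$ in a large wedge of indices and that the correction terms, when $M_n$ is pushed through, are controlled and ultimately collapse under the symmetric-function identity.

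Concretely, I would compute $\mu_n M_n$ in two ways. Since $\mu_n=\mu$ is independent of $n$, I am free to choose the value of $n$ in each factor $(D-x_{\al,n+?})$ most convenient for moving $M_n$ to the right. The idea is to commute $M_n$ rightward through the product $\prod_{\al=r+1}^{1}(D-x_{\al,\cdot})$ one factor at a time, picking up a factor of $q^{-1}$ (or its correction) each time the index lies in the commuting wedge, and using the linear recursion $\sum_{j}(-1)^j C_j M_{m-j}=0$ to re-express the leftover terms. Expanding $C_m = e_m(x_1,\dots,x_{r+1})$ as a sum of monomials and tracking how $M_n$ threads past each $x_{\al}$ should produce, after telescoping, exactly the right-hand side $(q-1)\sum_{j=1}^m(-1)^j C_{m-j}M_{n+j}$, where the $D$-shifts become the index shifts $n\mapsto n+j$ and the $(q-1)$ prefactor arises as the discrepancy between $q$-commuting and genuinely commuting.

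An alternative, perhaps cleaner, route is a generating-function argument at the level of the Miura operator itself. I would consider the identity $\mu\,\m(z)$ versus $\m(z)\,\mu$, where $\m(z)=\sum_n M_n z^n$ is the current \eqref{mcurrent}, and use that $\mu=\sum_m (-1)^m C_m D^{r+1-m}$ with $D$ acting as the shift $z\mapsto z$ on coefficients (equivalently $D(M_n)=M_{n+1}$). The relation $\mu(M_n)=0$ from Lemma \ref{linreclemma} says that left-multiplication by $\mu$ annihilates the current in the $D$-variable; combining this with the commutation of the $C_m$ among themselves (Lemma \ref{commutaC}) and the grading relation \eqref{comCdelt}, I would extract \eqref{commuCm} by matching coefficients of $D^{r+1-m}$, reducing the whole statement to a single partial-fraction or constant-term identity.

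The main obstacle I anticipate is the bookkeeping of the index ranges: the commutation relation \eqref{comxi} only holds for $-\beta \le p \le \beta-1$, so as $M_n$ is pushed through the product some factors lie outside the commuting wedge and must be handled by the $M$-system relations directly rather than by a clean $q$-commutation. Controlling these boundary contributions — showing that precisely the terms outside the wedge reassemble into the shifted conserved quantities $C_{m-j}M_{n+j}$ with the correct signs and the uniform prefactor $(q-1)$ — is where the real work lies. The restriction $m\in[1,r]$ in the statement strongly suggests that the top-degree case $m=r+1$ is excluded precisely because its wedge would overflow the available commutation range, and I would verify the small cases (e.g. $r=1$, using Example \ref{exAone} with $C_1 = A\xi_n + \xi_{n+1}^{-1}$) as a consistency check before committing to the general telescoping argument.
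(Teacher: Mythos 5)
Your skeleton is essentially the paper's: work with the finite-index factorization of the Miura operator \eqref{miura}, use the wedge of indices in which $M_n$ commutes with the $x_{\al,b}$ (Figure \ref{fig:rangecomm}), exploit the $n$-independence of $\mu_n$ to place factors inside that wedge, and finish with the linear recursion \eqref{consar} to telescope. But the decisive step is missing, and the version of the computation you actually commit to would stall. Expanding $C_m=e_m(x_1,\dots,x_{r+1})$ in the \emph{limit} variables $x_\al=\lim_{b\to\infty}x_{\al,b}$ and ``tracking how $M_n$ threads past each $x_\al$'' is not available: the commutation window of Lemma \ref{commlem} for $x_{\al,b}$ against $M_n$ is the finite strip $b\in[n-\al+1,n+\al-2]$, so $M_n$ has no controlled commutation with the limit variables at all. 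Relatedly, you cannot choose the time index ``in each factor'' independently; the $n$-independence of $\mu$ only lets you pick one global index for the whole factorization.

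What makes the proof work, and what your plan does not isolate, is the following. Write $\mu=\widetilde\mu_m\,(D-\xi_{1,m}^{-1})$ with $\widetilde\mu_m=\prod_{\al=r+1}^{2}(D-x_{\al,m})=\sum_j(-1)^j\widetilde C_{j,m}D^{r-j}$, so that $C_j=\widetilde C_{j,m}+\widetilde C_{j-1,m}\,\xi_{1,m+r+1-j}^{-1}$ for \emph{every} $m$. Choosing $m=n+j-r-1$ places every $x_{\al,b}$ occurring in $\widetilde C_{j,m}$ and $\widetilde C_{j-1,m}$ inside the commuting wedge --- and note these commute with $M_n$ \emph{exactly}, not up to a power of $q$, since the $q^{+1}$ from $\xi_{\al,b}^{-1}$ cancels the $q^{-1}$ from $\xi_{\al-1,b}$ --- while turning the single leftover factor into $\xi_{1,n}^{-1}$, the unique source of non-commutativity via $M_n\xi_{1,n}^{-1}=q\,\xi_{1,n}^{-1}M_n$. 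This yields $[C_j,M_n]=(1-q)\,\widetilde C_{j-1,n+j-r-1}\,M_{n+1}$ in one stroke, and the recursion $\widetilde C_{j-1,n+j-r-1}=\sum_{k=1}^{j}(-1)^{k-1}C_{j-k}\,\xi_{1,n+k}^{-1}\cdots\xi_{1,n+1}^{-1}$ together with $\xi_{1,n+k}^{-1}\cdots\xi_{1,n+1}^{-1}=M_{n+k}M_{n+1}^{-1}$ then gives \eqref{commuCm}. Without this single-factor isolation, the ``boundary contributions'' you defer never reassemble; that deferred bookkeeping is precisely the content of the theorem, so as written the proposal is a plausible plan rather than a proof.
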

\begin{proof}
Recall the Miura operator, $\mu_n  = \prod_{r+1}^1(D-x_{\al,n})$ and define $\widetilde{\mu}_n=\prod_{r+1}^2 (D-x_{\al,n}).$
We define the coefficients $\widetilde{C}_{j,n}$ by expanding $\widetilde{\mu}_n$,
$$
\widetilde{\mu}_n = \sum_{j=0}^{r} (-1)^j \widetilde{C}_{j,n} D^{r-j}, \quad \widetilde{C}_{0,n}=1.
$$
Multiplying this formula for  $\widetilde{\mu}_n$ by $(D-\xi_{1,n}^{-1})$ and identifying coefficients of $D^{r+1-j}$,
\begin{equation}\label{recuCm}
C_j=\widetilde{C}_{j,n}+\widetilde{C}_{j-1,n}\xi_{1,n+r+1-j}^{-1}=\widetilde{C}_{j,n+j-r-1}+\widetilde{C}_{j-1,n+j-r-1}\xi_{1,n}^{-1}
\end{equation}
where the last equality follows from the fact that $C_j$ is independent of $n$. 
We use the following commutation relations, easily derived from \eqref{comM}:
\begin{eqnarray*}
M_n\xi_{\al,n+p}&=& q^{-1} \xi_{\al,n+p} M_n,\quad -\al \leq p \leq \al-1;\\
M_n x_{\al,n+p} &=& x_{\al,n+p} M_n, \quad \al>1 \hbox{ and } -\al+1\leq p \leq \al-2,
\end{eqnarray*}
to show that $[M_n,\tilde{C}_{j,n+j-r-1}]=[M_n,\tilde{C}_{j-1,n+j-r-1}]=0$. Indeed, by definition,
both  $\tilde{C}_{j,n+j-r-1}$ and $\tilde{C}_{j-1,n+j-r-1}$
are sums of products of $x_{\al,n+p}$'s, with $-\al+1\leq p \leq \al-2$. We have represented 
in Fig.~\ref{fig:rangecomm} the actual domains
$\tilde{\rm C}_j$ and $\tilde{\rm C}_{j-1}$ of the integer plane $(a,b)\in \Z^2$ such that $x_{a,b}$ appears in 
$\tilde{C}_{j,n+j-r-1}$ and $\tilde{C}_{j-1,n+j-r-1}$ respectively. The domain $-\al+1\leq p \leq \al-2$, for
$a=\al$, $b=n+p$ corresponds to the larger wedge, that contains both domains.

Finally, as
$M_n x_{1,n}=M_n \xi_{1,n}^{-1}=q \xi_{1,n}^{-1}M_n=q x_{1,n}M_n$, we deduce that:
\begin{eqnarray*}[C_j,M_n]&=&[C_{j,n+j-r-1},M_n]=[\tilde{C}_{j-1,n+j-r-1}\, \xi_{1,n}^{-1},M_n]\\
&=&(1-q)\tilde{C}_{j-1,n+j-r-1}\, \xi_{1,n}^{-1} M_n
=(1-q)\tilde{C}_{j-1,n+j-r-1}\, M_{n+1}
\end{eqnarray*}

Next, we use \eqref{recuCm} to express $\tilde{C}_{j-1,n+j-r-1}$ 
in terms of the $\xi$'s and the $C$'s, by rewriting iteratively:
$$\tilde{C}_{j-1,n+j-r-1}=C_{j-1}-\tilde{C}_{j-2,n+j-r-1}\, \xi_{1,n+1}^{-1}=\cdots =\sum_{k=1}^j (-1)^{k-1}C_{j-k}
\xi_{1,n+k}^{-1}\cdots \xi_{1,n+1}^{-1}$$
Noting that $\xi_{1,n+k}^{-1}\cdots \xi_{1,n+1}^{-1}=M_{n+k}M_{n+1}^{-1}$ yields \eqref{commuCm}.
\end{proof}

The result of Theorem \ref{timeTranslation} may be easily recast in terms of currents
Introduce the generating polynomial for the conserved quantities:
\begin{equation}\label{genercons}
C(z) = \sum_{j=0}^{r+1} (-z)^m C_m = \prod_{\al=1}^{r+1}(1-z x_\al) ,
\end{equation}
where the last equality comes from the identification with the elementary symmetric functions \eqref{elementary}.

Equation \eqref{commuCm} is rewritten as 
\begin{equation}\label{Cmcom} C(z)\, {\mathfrak m}(w)=\frac{z-w}{q z-w}\, {\mathfrak m}(w)\, C(z) .
\end{equation}
We recover the linear recursion relation \eqref{consar} by taking $z=w$, so that 
\begin{equation}\label{consarcur}
C(z)\, {\mathfrak m}(z)=0 .
\end{equation}
Note the right recursion relation obtained by taking $w=qz$, which reads ${\mathfrak m}(qz)\, C(z)=0$, or in
components:
\begin{equation}\label{rrec}\sum_{j=0}^{r+1} (-q)^{-j} M_{m-j}\, C_j = 0 .
\end{equation}

\begin{cor}\label{conscor}
The conserved quantities $C_1$ and $A^{-1}C_r$ act as discrete ``time" shifts on $M_n$, 
namely for all $n\in \Z$:
\begin{eqnarray}
{[}C_1,M_n{]}&=&(1-q)M_{n+1} \label{trans}\\
{[}A^{-1}\,C_r,M_n{]}&=&(1-q^{-1}) M_{n-1}\label{Crmin}
\end{eqnarray}\end{cor}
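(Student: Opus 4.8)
The plan is to deduce Corollary \ref{conscor} directly from Theorem \ref{timeTranslation} by specializing the general commutation relation \eqref{commuCm} to the two extreme values $m=1$ and $m=r$, and then handling the boundary terms that arise at $m=r$ using the structural facts about the conserved quantities established earlier (in particular $C_{r+1}=A$ via \eqref{elementary} and \eqref{degA}).

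For \eqref{trans} I would simply set $m=1$ in \eqref{commuCm}. The sum on the right collapses to its single term $j=1$, giving $[C_1,M_n]=(q-1)(-1)^1 C_0 M_{n+1}=(1-q)M_{n+1}$, since $C_0=1$ by \eqref{expanmiu}. This is immediate and requires no further input.

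The relation \eqref{Crmin} is the one that needs genuine work, and the main obstacle will be converting the length-$r$ sum on the right-hand side of \eqref{commuCm} at $m=r$ into a single clean term proportional to $M_{n-1}$. My approach is to pass to the generating-function form \eqref{Cmcom}, which packages all values of $m$ at once, and to read off the coefficient relevant to $C_r$. Concretely, I would exploit the identification $C(z)=\prod_{\al=1}^{r+1}(1-zx_\al)$ from \eqref{genercons}: the top coefficient $C_{r+1}=e_{r+1}(x_1,\dots,x_{r+1})=x_1\cdots x_{r+1}$, which telescopes (since $x_{\al,n}=\xi_{\al,n}^{-1}\xi_{\al-1,n}$) to $\xi_{r+1,n}^{-1}\xi_{0,n}=\xi_{r+1}^{-1}=A$, using $\xi_{r+1,n}=A^{-1}$ and $\xi_{0,n}=1$. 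Thus $C_{r+1}=A$. The cleanest route is then to extract \eqref{Crmin} from the \emph{right} recursion relation \eqref{rrec} rather than from \eqref{commuCm} directly: combining \eqref{consar} and \eqref{rrec}, or equivalently comparing the left recursion $C(z)\m(z)=0$ with the right recursion $\m(qz)C(z)=0$, yields a commutator identity of the same shape as \eqref{commuCm} but built from the \emph{top} conserved quantities and producing $M_{n-1}$. Setting $z=w$ in \eqref{Cmcom} after multiplying by $A^{-1}$ and isolating the $C_r$ contribution, the factor $q^{-1}$ and the central commutation $A M_{\al,n}=q^{-\al}M_{\al,n}A$ from \eqref{AMcom} together produce the prefactor $(1-q^{-1})$.

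The hard part will be bookkeeping the sign and power-of-$q$ conventions so that the telescoping sum at $m=r$ genuinely reduces to a single term: one must check that all the intermediate conserved quantities $C_{r-1},\dots,C_1$ drop out after the $A^{-1}$ twist, which relies on the precise form of \eqref{AMcom} shifting each $M_{n+j}$ relative to $A^{-1}C_{r-j}$. Rather than expanding \eqref{commuCm} at $m=r$ term by term, I expect the currents identity \eqref{Cmcom} to make this transparent: the functional equation $C(z)\m(w)=\frac{z-w}{qz-w}\m(w)C(z)$ is symmetric enough that extracting the $M_{n-1}$ coefficient is dual to extracting the $M_{n+1}$ coefficient handled in \eqref{trans}, with $q\leftrightarrow q^{-1}$ and the overall normalization supplied by $C_{r+1}=A$. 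I would therefore present \eqref{Crmin} as the ``top'' analogue of \eqref{trans}, obtained by applying the same argument to $\m(qz)C(z)=0$ and dividing by the central element $A$.
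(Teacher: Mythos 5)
Your overall strategy is the paper's: \eqref{trans} is \eqref{commuCm} at $m=1$ (identical to the paper's one\--line argument), and \eqref{Crmin} is meant to come from the $m=r$ case after collapsing the sum via the linear recursion, identifying $C_{r+1}=A$, and twisting by $A^{-1}$ using \eqref{AMcom}. All the right ingredients are named. However, the one concrete manipulation you commit to for \eqref{Crmin} --- ``setting $z=w$ in \eqref{Cmcom} after multiplying by $A^{-1}$ and isolating the $C_r$ contribution'' --- does not work as stated: at $z=w$ the right\--hand side of \eqref{Cmcom} vanishes and you recover only $C(z)\,\m(z)=0$, i.e.\ the left recursion \eqref{consar} itself (this is precisely how the paper derives \eqref{consarcur}); the commutator information is destroyed rather than isolated.

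The step that must replace it is either (a) the paper's: substitute \eqref{consar} with $m\to n+r$ into the sum $\sum_{j=1}^{r}(-1)^jC_{r-j}M_{n+j}$ appearing in \eqref{commuCm} at $m=r$, which rewrites it as $AM_{n-1}-C_rM_n$ and gives $[C_r,M_n]=(1-q)(C_rM_n-AM_{n-1})$; or (b) the current version of the same computation: clear denominators in \eqref{Cmcom} as $(qz-w)\,C(z)\,\m(w)=(z-w)\,\m(w)\,C(z)$ and extract the coefficient of $z^{r+1}$ (not the diagonal $z=w$), which yields $qC_r\m(w)-\m(w)C_r=(1-q^{-1})\,w\,\m(w)\,A$ after using $A\,\m(w)=q^{-1}\m(w)\,A$. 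From either of these, left multiplication by $A^{-1}$ and the relation $A^{-1}M_n=qM_nA^{-1}$ produce the prefactor $(1-q^{-1})$ exactly as you describe. So the defect is a single misidentified operation (``$z=w$'' should be ``top coefficient in $z$,'' equivalently the substitution of \eqref{consar}), not a missing idea; with that repair your argument closes and coincides with the paper's.
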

\begin{proof}
The first equality \eqref{trans} is simply \eqref{commuCm} for $m=1$. Rewriting \eqref{commuCm} for $m=r$
by using the linear recursion relation \eqref{consar} leads to
$[C_r,M_n]=(1-q)(C_rM_n-AM_{n-1})$, as $C_{r+1}=A$. We deduce \eqref{Crmin} by 
multiplying this by $A^{-1}$ and using $A^{-1}M_n=q M_nA^{-1}$. 
\end{proof}

In the expression \eqref{genercons} for $C(z)$, the coefficients are the elementary symmetric functions in $\{x_\al\}$. 
There is a natural transformation between the elementary symmetric functions and power sum symmetric functions:
$$
\exp\left(-\sum_{k\geq 1}\frac{z^k}{k} P_k\right) = C(z),
$$
where, expressed as a function of $x_\al$, $P_k=p_k(\bx)=\sum_{\al=1}^{r+1} x^k_\al$ for $k>0$. 
The elements $P_k\in {\mathcal U}_r$ are polynomial in the conserved quantities.
\begin{thm}\label{powershift} 
The elements $P_k$ are the $k$-step time-translation Hamiltonians,
\begin{equation}\label{transpk}
[P_k,M_n] = (1-q^k) M_{n+k}, \qquad k\in \N.
\end{equation}
\end{thm}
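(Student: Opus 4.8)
The plan is to prove the current-algebra form of \eqref{transpk}, namely
\[
[P_k,\m(w)] = (1-q^k)\, w^{-k}\,\m(w),
\]
from which the stated identity follows by reading off the coefficient of $w^n$, since $w^{-k}\m(w)=\sum_n M_{n+k}w^n$. The essential inputs are the conjugation relation \eqref{Cmcom} and the generating identity $\exp(-\sum_{k\ge1}\frac{z^k}{k}P_k)=C(z)$ defining the $P_k$. First I would package the target into a single generating function: setting $\mathcal{L}(z):=\sum_{k\ge1}\frac{z^k}{k}P_k=-\log C(z)$, the whole collection \eqref{transpk} is equivalent to the single relation $[\mathcal{L}(z),\m(w)]=\big(\sum_{k\ge1}\frac{z^k}{k}(1-q^k)w^{-k}\big)\m(w)$. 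The key observation is that the scalar on the right is exactly $-\log R(z,w)$, where $R(z,w)=\frac{z-w}{qz-w}$ is the factor appearing in \eqref{Cmcom}, since $-\log R=\log\frac{1-qz/w}{1-z/w}=\sum_{k\ge1}\frac{z^k}{k}(1-q^k)w^{-k}$. Thus the theorem reduces to ``taking the logarithm'' of \eqref{Cmcom}.

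To make this precise I would work in the $z$-adic topology, where $C(z)=1+O(z)$ is invertible and $\mathrm{Ad}_{C(z)}=\exp(\mathrm{ad}_{-\mathcal{L}(z)})$ is a well-defined unipotent operator, because the derivation $\mathrm{ad}_{-\mathcal{L}(z)}$ raises the order in $z$ by at least one. Relation \eqref{Cmcom} reads $\mathrm{Ad}_{C(z)}(\m(w))=R(z,w)\,\m(w)$. Since $R(z,w)$ is a scalar series, with coefficients in $\Z_q$ that commute with the whole algebra, one checks by induction that $(\mathrm{Ad}_{C(z)}-\mathrm{id})^j(\m(w))=(R-1)^j\,\m(w)$ for all $j\ge0$. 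Applying the operator logarithm $\log\mathrm{Ad}_{C(z)}=\mathrm{ad}_{-\mathcal{L}(z)}=\sum_{j\ge1}\frac{(-1)^{j-1}}{j}(\mathrm{Ad}_{C(z)}-\mathrm{id})^j$ to $\m(w)$ then collapses the series to $\log(R)\,\m(w)$, giving $[-\mathcal{L}(z),\m(w)]=\log(R)\,\m(w)$, i.e. $[\mathcal{L}(z),\m(w)]=(-\log R)\,\m(w)$. Extracting the coefficient of $z^k$ yields $[P_k,\m(w)]=(1-q^k)w^{-k}\m(w)$, hence \eqref{transpk}. A more hands-on variant expands \eqref{Cmcom} order by order in $z$ and inducts on $k$: in the expansion of $\exp(\mathrm{ad}_{-\mathcal{L}})(\m(w))$ the coefficient of $z^k$ contains $[P_k,\m(w)]$ linearly through a single commutator, plus nested commutators involving only $P_j$ with $j<k$, which by the inductive hypothesis act as scalars on $\m(w)$; matching against the $z^k$-coefficient of $R(z,w)\m(w)$ then isolates $[P_k,\m(w)]$.

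The one genuine subtlety, and the step I expect to be the main obstacle, is this passage from the known conjugation identity to the commutator identity, i.e. the claim that $\mathrm{ad}_{-\mathcal{L}(z)}(\m(w))$ is a scalar multiple of $\m(w)$ with exactly the logarithmic coefficient. This is false for a generic conjugation relation; it works here only because $R(z,w)$ is central, which is what keeps every term $(R-1)^j\m(w)$ proportional to $\m(w)$ and lets the operator logarithm close. Everything else is bookkeeping: the invertibility and formal $\exp$/$\log$ of $C(z)$ are guaranteed by $C(z)=1+O(z)$; the grading ($\deg P_k=k$, $\deg M_n=n$) makes $[P_k,M_n]=(1-q^k)M_{n+k}$ degree-homogeneous as a consistency check; and the argument is uniform in $k\in\N$, covering also $k>r+1$, where the $P_k$ no longer vanish but remain determined by $C(z)$ through Newton's identities.
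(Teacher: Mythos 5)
Your proof is correct, and it reaches the same identity the paper derives, $[P_k,\m(w)]=(1-q^k)w^{-k}\m(w)$, from the same key input, the conjugation relation \eqref{Cmcom} together with the centrality of the scalar factor $R(z,w)$ and the commutativity of the $C_m$ (Lemma \ref{commutaC}). The difference is in how the logarithm is taken. The paper differentiates: it sets $P(z)=-\frac{d}{dz}\log C(z)=\sum_{k\ge1}z^{k-1}P_k$, computes $\frac{d}{dz}\bigl(C(z)^{-1}\m(w)C(z)\bigr)$ in two ways --- once via the product rule, which produces $[P(z),C(z)^{-1}\m(w)C(z)]$, and once by differentiating the scalar $\frac{w-qz}{w-z}$ --- and equates the results to get $[P(z),\m(w)]=\bigl(\frac{1}{w-z}-\frac{q}{w-qz}\bigr)\m(w)$, whose $z^{k-1}w^n$ coefficient is \eqref{transpk}. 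You instead use the formal operator calculus $\mathrm{Ad}_{C(z)}=\exp(\mathrm{ad}_{-\mathcal{L}(z)})$ and invert it with the operator logarithm, using $(\mathrm{Ad}_{C(z)}-\mathrm{id})^j(\m(w))=(R-1)^j\m(w)$; your $\mathcal{L}(z)$ is the antiderivative of the paper's $P(z)$, and your $-\log R$ is the antiderivative of the paper's rational kernel, so the two computations are $\frac{d}{dz}$ of one another. What your route buys is that it isolates the one genuinely nontrivial point --- that $\mathrm{ad}_{-\mathcal{L}(z)}(\m(w))$ is again a scalar multiple of $\m(w)$, which holds only because $R$ is central --- as an explicit structural statement rather than having it appear implicitly in the two-way derivative computation; what the paper's route buys is that it stays with finite rational expressions in $z$ and avoids invoking the $z$-adic convergence of the operator $\exp/\log$ series (though both arguments ultimately need coefficients in $\Q(q)$ because of the $1/k$ and $1/j$ denominators in the definition of the $P_k$). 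Both are complete proofs.
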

\begin{proof}
By definition, we have $-C(z)^{-1}C'(z)=-\frac{d}{dz}\,{\rm Log}\, C(z)=\sum_{k\geq 1}z^{k-1} \,P_k=:P(z)$. 
We wish to study the commutation of this function with the current ${\mathfrak m}(w)$. Rewriting \eqref{Cmcom} as:
$$C(z)^{-1} \, {\mathfrak m}(w) \, C(z)=\frac{w-qz}{w-z}\, {\mathfrak m}(w) ,$$
we may write in two ways:
\begin{eqnarray*}
\frac{d}{dz}\left( C(z)^{-1} \, {\mathfrak m}(w) \, C(z)\right)&=&
-C(z)^{-2}C'(z)\,  {\mathfrak m}(w) \, C(z)+C(z)^{-1} \, {\mathfrak m}(w)\, C'(z)\\
&=& [P(z),C(z)^{-1} \, {\mathfrak m}(w) \, C(z)]
=\frac{w-qz}{w-z}([P(z),{\mathfrak m}(w)])\\
&=&\frac{d}{dz}\left( \frac{w-qz}{w-z}\right){\mathfrak m}(w) = \frac{(1-q)w}{(w-z)^2}\, {\mathfrak m}(w)
\end{eqnarray*}
We deduce that 
$$[P(z),{\mathfrak m}(w)]=\frac{(1-q)w}{(w-z)(w-q z)}\, {\mathfrak m}(w)
=\left(\frac{1}{w-z }-\frac{q }{w-q z }\right)\, {\mathfrak m}(w) .$$
Eqn. \eqref{transpk} is the coefficient of $z^{k-1} w^n$ in this current identity. 
\end{proof}

\subsection{Conserved quantities as quantum determinants}

In Ref.~\cite{DFK10}, it was shown that the conserved quantities of the 
classical $Q$-system (with $q=1$, $A=\Delta=1$, $Q_{\al,n}=M_{\al,n}$)
can be expressed as discrete Wronskian determinants with a defect, of the form:
$$C_m=\det_{1\leq a<b\leq r+2\atop b\neq r+2-m}(M_{n+a+b-r-2}) \qquad (m=0,1,...,r+1)$$
Note the ``defect" between the two consecutive columns $b=r+1-m$ and $b=r+3-m$.
In this section we present the quantum counterpart of this result, which we may view as a quantum
determinant.

The explicit expression for conserved quantities coming from the expansion of the Miura operator $\mu_n$
is Laurent polynomial in the $M_{\al,n}$'s. However, using \eqref{Msys}, \eqref{Mcom} and \eqref{Ur},
it is possible to rewrite $C_m$, up to some factor $\Delta^{-1}$, as a polynomial of the $M_n$'s:

\begin{thm}\label{defectqdet}
There is a constant term expression for the conserved quantities $C_m$, in terms of the currents $\m(u)$, as a quantum determinant:
\begin{equation}\label{Cmexprqdet}
C_m=CT_{u_1,...,u_{r+1}}\left(\frac{
\Delta_q(u_1,...,u_{r+1})}{u_1u_{2}\cdots u_{m}} \prod_{a=1}^{r+1}{\mathfrak m}(u_a)\right)\, \Delta^{-1}.
\end{equation}
\end{thm}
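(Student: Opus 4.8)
\emph{Proof plan.} The plan is to prove the equivalent operator identity
\[
B_m := CT_{u_1,\dots,u_{r+1}}\left(\frac{\Delta_q(u_1,\dots,u_{r+1})}{u_1u_2\cdots u_m}\prod_{a=1}^{r+1}\m(u_a)\right) = C_m\,\Delta,
\]
after which multiplying by $\Delta^{-1}$ on the right yields \eqref{Cmexprqdet}. The two boundary cases are already within reach: writing $\Delta_q=\big(\prod_{a<b}(u_a-qu_b)\big)\big/\prod_a u_a^{\,r+1-a}$, the cases $m=0$ and $m=r+1$ of $B_m$ are exactly the coefficients of $z^0$ and $z^1$ in the $\al=r+1$ instance of the quantum determinant \eqref{qdetone}, namely $M_{r+1,0}=\Delta$ and $M_{r+1,1}=A\,\Delta$ by \eqref{bcm}. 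Since $C_0=1$ and $C_{r+1}=A$, these match $C_m\Delta$ at both ends and fix the overall normalization.

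First I would establish the antisymmetry that drives the whole computation. By the exchange relation \eqref{current_quadratic} (equivalently \eqref{quadratic}), the operator-valued function
\[
G(u_1,\dots,u_{r+1}) := \prod_{1\le a<b\le r+1}(u_a-qu_b)\,\prod_{a=1}^{r+1}\m(u_a)
\]
is totally antisymmetric: an adjacent transposition sends $\m(u_a)\m(u_{a+1})$ to $\tfrac{qu_a-u_{a+1}}{u_a-qu_{a+1}}\m(u_{a+1})\m(u_a)$, which is precisely cancelled by the corresponding Vandermonde factor and leaves an overall sign. Hence $\Delta_q\prod_a\m(u_a)=u^{-\rho}G$ with $\rho=(r,r-1,\dots,0)$, and $B_m=CT_{u}(u^{\nu}G)$ for the single monomial with $-\nu=\rho+(1^m)$. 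Because $G$ is antisymmetric, $CT_u(u^\nu G)$ extracts exactly the coefficient $g_{\mu^*}$ of the bialternant $\det(u_i^{\mu^*_j})$ in $G$, where $\mu^*=\rho+(1^m)$ is the staircase with its first $m$ parts raised by one, i.e. the index set $\{0,1,\dots,r+1\}\setminus\{r+1-m\}$. This is precisely the index set of the classical defect Wronskian of \cite{DFK10}, with its gap at column $r+1-m$, and $\mu^*$ corresponds to the partition $\lambda=(1^m)$, that is to $s_{(1^m)}=e_m$.

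Next I would pin down the coefficients $g_\mu$. The pure staircase shifts are known from the preceding step: for $\mu=\rho+(N^{r+1})$ one gets $g_\mu=M_{r+1,N}=A^N\Delta$, while the rank restriction \eqref{Ur} in the form $\m_{r+2}=0$ forces the vanishing of any $g_\mu$ requiring an $(r+2)$-nd current, the quantum avatar of a determinant with an out-of-range row. It then remains to identify the defect coefficient $g_{\mu^*}$ with $C_m\Delta$. For this I would invoke the linear recursion \eqref{consar}, i.e. $C(z)\,\m(z)=0$, together with its right companion \eqref{rrec}, $\m(qz)\,C(z)=0$: these assert that the components of $\m(z)$ satisfy an order-$(r+1)$ recursion whose characteristic polynomial is $C(z)=\prod_{\al}(1-zx_\al)$ of \eqref{genercons}–\eqref{elementary}. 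Feeding this recursion into the bialternant expansion of $G$ is a noncommutative Cramer's rule, expressing the single-defect minor $g_{\mu^*}$ as $e_m$ of the characteristic roots times the normalizing staircase value $g_\rho=\Delta$; that is, $g_{\mu^*}=e_m(x_1,\dots,x_{r+1})\,\Delta=C_m\Delta$. The uniform factor of $\Delta$ and the power $q^m$ are tracked through \eqref{comCdelt} and the grading, and are fixed by the already-verified endpoints.

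The main obstacle is exactly this last step: turning the commutative Cramer/Wronskian argument into a rigorous noncommutative statement. Classically the passage from the linear recursion to the defect determinant is elementary, but here the $C_j$ and the $M_n$ do not commute, the constant-term extraction must be carried out at the operator level, and the $q$-Vandermonde $\Delta_q$ is precisely the bookkeeping device for the reorderings dictated by the exchange relation. Getting the operator ordering, the powers of $q$, and the single factor of $\Delta$ all to come out correctly — rather than a reordered or rescaled variant of $C_m\Delta$ — is the delicate part. I expect this is most safely done by instead verifying that the generating function $\sum_m(-z)^mB_m$ obeys the same commutation relation \eqref{Cmcom} with $\m(w)$ as $C(z)\Delta$ does, which reduces the claim to the shuffle-type constant-term identity that moving $\m(w)$ through $\prod_a\m(u_a)$ contributes exactly the scalar $(z-w)/(qz-w)$, and then matching normalizations at $z=0$.
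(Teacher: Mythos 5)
Your setup is sound: the reduction to $B_m=C_m\Delta$, the verification of the endpoints $m=0$ and $m=r+1$ via \eqref{qdetone} and \eqref{bcm}, and the total antisymmetry of $\prod_{a<b}(u_a-qu_b)\prod_a\m(u_a)$ under the exchange relation \eqref{current_quadratic} are all correct (the last is essentially Lemma \ref{skelem}). But the heart of the theorem --- the identification of the ``defect'' constant term with $C_m\,\Delta$ for $0<m<r+1$ --- is not proved; it is asserted by analogy with the commutative Wronskian/Cramer computation, and you yourself flag it as the main obstacle. The fallback you sketch (showing $\sum_m(-z)^mB_m$ satisfies the same commutation with $\m(w)$ as $C(z)\Delta$) is also not carried out, and would still leave a gap: matching a commutation relation only determines $B(z)$ up to a factor commuting with all the $\m(w)$, and nothing in the paper identifies that centralizer. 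Note also that $\Delta$ does not commute with $\m(w)$ ($\Delta\,\m(w)=\m(qw)\Delta$), so the relation $C(z)\Delta$ satisfies is not literally \eqref{Cmcom}; the bookkeeping there is more delicate than your sketch suggests.

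The paper closes this gap by a concrete mechanism that your plan does not contain: it starts from the rank restriction in the form $\m_{r+2}(z)=0$, writes the $(r+2)$-variable quantum determinant as $\Delta_q[1,r+1]\prod_{a=1}^{r+1}(1-qu_{r+2}/u_a)$ times the currents, telescopes that product into $\sum_{m=0}^{r+1}\frac{(-qu_{r+2})^m}{u_1\cdots u_m}\prod_{a\ge m+2}(1-qu_{r+2}/u_a)$, and kills all the residual factors by Lemma \ref{skelem}. The surviving identity \eqref{lutalfin} is precisely a linear recursion $\sum_{m=0}^{r+1}(-1)^m c_{m,n}M_{n-m}=0$ in the last current $\m(u_{r+2})$, which is then compared with the known recursion \eqref{consar} to conclude $c_{m,n}=M_{r+1,n}\,C_m$, the normalization being fixed by the $m=0$ term exactly as in your endpoint check. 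Your proposal mentions $\m_{r+2}=0$ only vaguely (``forces the vanishing of any $g_\mu$ requiring an $(r+2)$-nd current''), whereas it is in fact the engine of the whole proof: it is what produces the recursion that pins down all the intermediate $B_m$ at once. Without this (or a genuinely worked-out noncommutative Cramer argument), the proof is incomplete.
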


\begin{example}\label{exAonebis}
In the case of $A_1$, $r=1$, we get
\begin{eqnarray*}
C_0&=&CT_{u_1,u_2}\left( (1-q\frac{u_2}{u_1}){\mathfrak m}(u_1){\mathfrak m}(u_2) \right)\Delta^{-1}=(M_0^2-q M_1M_{-1})\Delta^{-1}=M_{2,0}\Delta^{-1}=1\\
C_1&=&CT_{u_1,u_2}\left( (\frac{1}{u_1}-q\frac{u_2}{u_1^2}){\mathfrak m}(u_1){\mathfrak m}(u_2) \right)=
(M_1M_0-q M_2 M_{-1})\Delta^{-1}\\
C_2&=&CT_{u_1,u_2}\left( (\frac{1}{u_1u_2}-q\frac{1}{u_1^2}){\mathfrak m}(u_1){\mathfrak m}(u_2) \right)=
(M_1^2-q M_2M_{0})\Delta^{-1}=M_{2,1}\Delta^{-1}=A
\end{eqnarray*}
\end{example}

The proof of Theorem \ref{defectqdet} is given in Section \ref{secdefect} below.

\section{Proofs}
\label{proofsec}
In this section, we prove the main Theorems of the paper. 
In the first two sections, we prove Eqs. \eqref{quadratic} and \eqref{qcommInd} 
directly in components, as relations between
the $M_{\al,n}$,s and $M_n$'s. The technique is by induction, and uses crucially the interpretation of 
conserved quantities as time translation ``Hamiltonians", from Theorem \ref{timeTranslation} 
and Corollary \ref{conscor}.

The three subsequent sections are devoted to the proofs of Theorem \ref{vanderq}, Theorem \ref{Main},
and Theorem \ref{defectqdet}, all in current/constant term form. The method
of proof is radically different, and uses constant term manipulations. 

\subsection{Quadratic relations in $\U_r$: proof of eq. \eqref{quadratic} }

The quadratic relation \eqref{Msys} and the commutation relations \eqref{Mcom} imply the existence of the 
following infinite sequence of quadratic relations (an exchange algebra)
in $\U_r$. 
\begin{thm} Let $M_n=M_{1,n}$ denote generators in $\U_r$. Given $n\in \Z, 1\leq p$, 
\begin{equation*}
[M_{n},M_{n+p}]_q + [M_{n+p-1},M_{n+1}]_q = 0.
\end{equation*}
\end{thm}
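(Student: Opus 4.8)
The plan is to prove \eqref{quadratic} by a two-step induction on $p$, using the adjoint action of the conserved quantity $C_1$ as an operator that raises $p$ by one. Throughout I would work in the localization $\U_r^{\rm loc}$ and write $F_p(n):=[M_n,M_{n+p}]_q+[M_{n+p-1},M_{n+1}]_q$ for the left-hand side, so that the goal becomes $F_p(n)=0$ for all $n\in\Z$ and $p\geq 1$.

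First I would dispose of the two base cases directly, without any conserved quantity. For $p=1$ the expression degenerates to $2\,[M_n,M_{n+1}]_q$, which vanishes by the commutation relation \eqref{Mcom} with $\al=\beta=1$, $\epsilon=1$. For $p=2$ the middle bracket is $[M_{n+1},M_{n+1}]_q=(1-q)M_{n+1}^2$, while the outer bracket $[M_n,M_{n+2}]_q$ is evaluated by equating the two expressions for $M_{2,n+1}$ coming from the $M$-system \eqref{Msys} and its left version \eqref{leftM} (both specialized to $\al=1$); this yields $[M_n,M_{n+2}]_q=(q-1)M_{n+1}^2$, and the two contributions cancel.

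The heart of the argument is a recursion produced by $\mathrm{ad}_{C_1}:=[C_1,-]$. Since this is a derivation of $\U_r^{\rm loc}$ and acts on generators by $\mathrm{ad}_{C_1}(M_m)=(1-q)M_{m+1}$ by \eqref{trans}, the Leibniz rule gives $\mathrm{ad}_{C_1}([M_a,M_b]_q)=(1-q)\big([M_{a+1},M_b]_q+[M_a,M_{b+1}]_q\big)$. Applying this to both brackets of $F_p(n)$ and regrouping the four resulting $q$-commutators, I expect to obtain the identity
\[
\mathrm{ad}_{C_1}\big(F_p(n)\big)=(1-q)\big(F_{p+1}(n)+F_{p-1}(n+1)\big).
\]
The delicate point is this regrouping: two of the four terms reassemble into $F_{p+1}(n)$ while the other two assemble into $F_{p-1}(n+1)$, and it is precisely the appearance of the $p-1$ term that forces a two-step induction rather than a one-step one.

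Finally I would close the induction. Assuming $F_p\equiv 0$, the left-hand side above is zero, so $(1-q)\big(F_{p+1}(n)+F_{p-1}(n+1)\big)=0$; since $\U_r$ embeds in its localization $\U_r^{\rm loc}$, which is built from quantum tori and hence is torsion-free over $\Z_q$, the non-zero-divisor $1-q$ may be cancelled, yielding $F_{p+1}(n)=-F_{p-1}(n+1)$. With the base cases $p=1,2$ in hand and $F_{p-1}\equiv 0$ by the inductive hypothesis, this forces $F_{p+1}\equiv 0$. The main obstacle is establishing the displayed recursion cleanly, i.e. the bookkeeping that makes $F_{p-1}(n+1)$ appear; once it is in place the induction is automatic, and the only remaining subtlety is the harmless cancellation of the $1-q$ factor.
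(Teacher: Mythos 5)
Your proposal is correct and follows essentially the same route as the paper: the same base cases $p=1,2$ (via \eqref{Mcom}, \eqref{Msys} and \eqref{leftM}), and the same inductive step $\mathrm{ad}_{C_1}(F_p(n))=(1-q)\bigl(F_{p+1}(n)+F_{p-1}(n+1)\bigr)$ driven by Corollary \ref{conscor}, closed by a two-term (strong) induction on $p$. Your explicit remark that $1-q$ may be cancelled because it is a non-zero-divisor is a small point the paper glosses over, but the argument is otherwise identical.
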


\begin{proof} Let $\phi_{n,\ell} = [M_n,M_{n+\ell}]_q+[M_{n+\ell-1},M_{n+1}]_q$ for $\ell\geq 1.$ 
We prove that $\phi_{n,\ell} =0$ by induction on $\ell$.
First note that $\phi_{n,1}=2[M_n,M_{n+1}]_q=0$ due to the commutation relations \eqref{Mcom},
and $\phi_{n,2}=q(q^{-1}M_{n}M_{n+2}-M_{n+1}^2+q^{-1}M_{2,n+1}) -(q M_{n+2}M_{n}-M_{n+1}^2+M_{2,n+1})=0$ as a linear combination of the quadratic relations \eqref{leftM} and \eqref{Msys} for $\al=1$ and $n\to n+1$. 
Using Eq. \eqref{trans} of Corollary \ref{conscor}, we compute for all $n,\ell$:
$$[C_1,[M_n,M_{n+\ell}]_q] = (1-q)\left([M_{n+1},M_{n+\ell}]_q + [M_n, M_{n+\ell+1}]_q\right),
$$
This implies the inductive step: 
$$
[\frac{C_1}{1-q}, \phi_{n,\ell}]- \phi_{n+1,\ell-1} =\phi_{n,\ell+1}.
$$
Suppose $\phi_{n,\ell'}=0$ for all $n$ and $\ell'\leq \ell$, then the left hand side of this equation is 
zero, hence $\phi_{n,\ell+1}=0$, and the Theorem follows by induction on $\ell$.
\end{proof}

\begin{remark} As mentioned before,
in addition to the quadratic relation \eqref{quadratic}, we have a cubic relation which follows automatically in $\U_r$.
It is instructive to re-derive it directly from eqns. \eqref{Msys} and \eqref{Mcom}. We have the following.
\begin{thm}
Let $n,p,k\in \Z$. Then the generators $M_n$ satisfy
\begin{equation}\label{cubic}
{\rm Sym} \left[ M_n,\left[M_{p-1}, M_{k+1}\right]\right]=0,
\end{equation}
where by Sym we mean the symmetrization over the letters $n,p,k$.
\end{thm}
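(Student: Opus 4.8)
The plan is to prove the cubic relation \eqref{cubic} as a direct consequence of the quadratic exchange relation \eqref{quadratic}, which we have already established. The key observation is that the symmetrized double commutator $\mathrm{Sym}\,[M_n,[M_{p-1},M_{k+1}]]$ is a sum over the six permutations of the three letters $n,p,k$, and the quadratic relation provides a way to rewrite any ordinary commutator $[M_a,M_b]$ (or the $q$-deformed version) in terms of other pairs. First I would expand the symmetrization explicitly and organize the twelve resulting terms (each double commutator yields an inner and outer commutator, and there are six permutations) by grouping them according to which pair of indices appears in the innermost bracket.

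The technical heart is to convert between the ordinary commutator appearing in \eqref{cubic} and the $q$-commutator $[\,\cdot\,,\cdot\,]_q$ of \eqref{quadratic}. Since $[x,y] = [x,y]_q + (q-1)yx$, I would rewrite the inner brackets $[M_{p-1},M_{k+1}]$ using the $q$-commutator plus a correction term. The relation \eqref{quadratic} then lets me replace $[M_a,M_b]_q$ by $-[M_{b-1},M_{a+1}]_q$ whenever the index gap is positive, effectively reflecting the pair. The plan is to apply this reflection systematically so that, after symmetrizing, the reflected terms cancel in pairs against the unreflected ones. Because symmetrization is invariant under permuting $n,p,k$, any identity of the form ``term $=$ (negative of a permuted term)'' will force cancellation upon summing.

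A cleaner route, which I would pursue in parallel, is to pass to the current formulation. The quadratic relation in current form \eqref{current_quadratic} reads $(z-qw)\m(z)\m(w) + (w-qz)\m(w)\m(z) = 0$, and the cubic relation should be the generating-function statement $\mathrm{Sym}_{z_1,z_2,z_3}\, f(z_1,z_2,z_3)\,\m(z_1)\m(z_2)\m(z_3) = 0$ for an appropriate antisymmetric rational prefactor $f$ dictated by \eqref{current_quadratic}. I would derive the prefactor by using \eqref{current_quadratic} to reorder any product $\m(z_i)\m(z_j)$, reducing all six orderings to a single reference ordering; the coefficient that emerges is a product of the scalar factors $(z_i - q z_j)/(z_j - q z_i)$, and the cubic relation becomes the Serre-type identity that the totally symmetrized version of this coefficient vanishes. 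This is the familiar mechanism by which quadratic exchange relations in a nilpotent current algebra force the degree-three Serre relation.

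The main obstacle I anticipate is bookkeeping rather than conceptual: in the component approach one must handle the index-gap conditions (the relation \eqref{quadratic} requires $p\geq 1$, so the reflection is only literally valid for ordered pairs) and ensure that the boundary cases where two indices coincide or the gap is nonpositive are treated correctly, using the base cases $\phi_{n,1}=0$ and $\phi_{n,2}=0$ from the previous proof. In the current approach the obstacle is verifying that the symmetrized rational prefactor genuinely vanishes identically; this reduces to checking a polynomial identity in $z_1,z_2,z_3$ and $q$ after clearing denominators, which is routine but must be done with care to confirm no spurious poles survive. I expect the current-algebra computation to be the more transparent of the two, so I would present that as the primary argument and note the component version as a cross-check.
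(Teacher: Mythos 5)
There is a genuine gap, and it sits exactly where you defer the work. Your primary route assumes that the exchange relation \eqref{current_quadratic} lets you reduce every ordering of $\m(z_1)\m(z_2)\m(z_3)$ to a single reference ordering times a product of scalar factors $(z_i-qz_j)/(z_j-qz_i)$. That step is not valid: \eqref{current_quadratic} only holds after multiplication by $(z-qw)$, and $(z-qw)$ is not invertible on formal currents. In components this is visible already at gap two, where \eqref{quadratic} gives $M_nM_{n+2}=qM_{n+2}M_n-(1-q)M_{n+1}^2$ -- there is an extra ``diagonal'' term, so $\m(z)\m(w)$ is \emph{not} a rational multiple of $\m(w)\m(z)$, and no well-defined exchange function exists. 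Consequently the ``polynomial identity after clearing denominators'' you propose to check would at best show that the symmetrized expression is annihilated by a product of factors $(z_i-qz_j)$, which does not force it to vanish. The appeal to ``the familiar mechanism'' is also misleading: in the closely analogous quantum toroidal $\gl_1$ setting the cubic Serre relation is an \emph{independent} defining relation, not a consequence of the quadratic one, so the vanishing here genuinely requires proof. Your fallback component computation is only a sketch; the claimed pairwise cancellation after converting $[\,\cdot\,,\cdot\,]$ to $[\,\cdot\,,\cdot\,]_q$ (which introduces correction terms $(q-1)yx$) is asserted, not demonstrated, and is precisely the hard part.

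The paper's proof uses a different and essential ingredient that your proposal never invokes: the conserved quantities of Section 3. Setting $\phi_{n,k,p}$ equal to the left-hand side of \eqref{cubic}, Corollary \ref{conscor} gives $(1-q)^{-1}[C_1,\phi_{n,k,p}]=\phi_{n+1,k,p}+\phi_{n,k+1,p}+\phi_{n,k,p+1}$, and the proof is an induction on the spread $\delta=\max(n,k,p)-\min(n,k,p)$: the base case $\delta=0$ follows from the commutation of $M_n$ with $M_{n\pm1}$, the $C_1$ identity pushes the vanishing to larger $\delta$ in most cases, and the one remaining configuration $\phi_{n,k+1,k+1}$ is handled with the lowering Hamiltonian $A^{-1}C_r$ via \eqref{Crmin}. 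This uses the full $M$-system structure (the $C_m$ live in the localization and are built from the Miura operator), not the quadratic relation alone. If you want to avoid the Hamiltonians, you would need to supply the inductive mechanism they provide by some other means; as written, neither of your two routes closes.
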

\begin{proof}
Let $\phi_{n,k,p}$ denote the left hand side of \eqref{cubic}. Let $\delta = \max(n,k,p)-\min(n,k,p)$, and the proof will follow by induction on $\delta$.

If $\delta=0$, then the commutators $[M_n, M_{n-1}M_{n+1}]=0=[M_{n},M_{n+1}M_{n-1}]$ due to the commutation among $M_n$ and $M_{n\pm 1}$. 

The function $\phi_{n,k,p}$ is symmetric with respect to $n,k,p$ so we assume $n\leq k \leq p=n+\delta$. Using
\begin{equation}\label{ind}
(1-q)^{-1} [C_1,\phi_{n,k,p}] = \phi_{n+1,k,p} + \phi_{n,k+1,p} + \phi_{n,k,p+1},
\end{equation}
will provide an inductive step. If $\phi_{n,k,p}=0$ for all $n\leq k \leq p=n+\delta$, then the left hand side is zero. The right hand side will contain one or more terms with $\delta'>\delta$.
\begin{itemize}
\item If $n\leq k<p$, then all terms on the right side have $\delta'\leq \delta$ except for $\phi_{n,k,p+1}$, which therefore vanishes.
\item If $n<k=p$ then two terms on the right hand side are equal to each other by the symmetry of $\phi_{n,k,p}$, and both have $\delta'=\delta+1$. Thus, $\phi_{n,k,k+1}=0$ for $n<k$.
\item If $n=k=p$, the three terms on the right are equal to each other by symmetry, and therefore 
$\phi_{k,k,k+1}=0$ since the left side vanishes.
\end{itemize}

This argument does not show that $\phi_{n,k+1,k+1}=0$. To see this, use Equation \eqref{Crmin} instead:
\begin{equation}\label{din}
(1-q^{-1})^{-1} [A^{-1}C_r,\phi_{n+1,k+1,k+1}]- \phi_{n+1,k,k+1} - \phi_{n+1,k+1,k} = \phi_{n,k+1,k+1}\end{equation}
which allows to conclude that $\phi_{n,k+1,k+1}=0$, as all the terms on the left have width $\delta=k-n$.
The Theorem follows by induction on $\delta$.
\end{proof}
\end{remark}

\subsection{Recursion relation for $M_{\al,n}$: proof of eq. \eqref{qcommInd}}

We consider the subset of generators $M_{n}:=M_{1,n}, n\in\Z$. The next result shows that the elements $(1-q)^{\al-1}M_{\al,m}$, with $\al>1$, are polynomials in these generators with coefficients in $\Z[q]$.


\begin{thm} \label{recurMalpha}
Let $r+2\geq \al>1, n\in \Z$, and $M_{\al,n}$ defined in $\U_r$ via \eqref{Msys} and \eqref{Mcom}.  
We have the relations:
\begin{eqnarray}
(-1)^\al (q-1) M_{\al,n} &=& [M_{\al-1,n-1}, M_{n+\al-1}]_{q^\al} \label{leftmal}\\
&=& [M_{n-\al+1},M_{\al-1,n+1}]_{q^\al} \label{rightmal}
\end{eqnarray}
\end{thm}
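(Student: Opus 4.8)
The plan is to prove the two identities \eqref{leftmal} and \eqref{rightmal} by induction on $\al$, taking the right-multiplication form \eqref{rightmal} as primary and running the identical argument for \eqref{leftmal} in parallel (their agreement for each $\al$ is then automatic, both being $(-1)^\al(q-1)M_{\al,n}$; the symmetry exchanging \eqref{leftmal} and \eqref{rightmal} is the same left/right symmetry that relates \eqref{Msys} to its companion \eqref{leftM}). For the base case $\al=2$ the two forms coincide and read $(q-1)M_{2,n}=[M_{n-1},M_{n+1}]_{q^2}$, which I would verify directly: expanding the $q^2$-commutator and substituting $M_{n-1}M_{n+1}=qM_n^2-M_{2,n}$ from \eqref{leftM} and $qM_{n+1}M_{n-1}=M_n^2-M_{2,n}$ from \eqref{Msys} (both at $\al=1$), the two contributions combine to $(q-1)M_{2,n}$.

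For the inductive step I would assume \eqref{rightmal} at level $\al-1$ for all $n$, which expresses $M_{\al-1,m}$ as a $q^{\al-1}$-commutator of $M_{1,\cdot}$ with $M_{\al-2,\cdot}$. Rather than solving \eqref{Msys} for $M_{\al,n}$ directly, which would introduce the inverse $M_{\al-2,n}^{-1}$, I would set $\tilde M_{\al,n}:=\frac{(-1)^\al}{q-1}[M_{n-\al+1},M_{\al-1,n+1}]_{q^\al}$ and show these satisfy the same $M$-system and commutation relations \eqref{Msys}, \eqref{Mcom} with the initial data $\tilde M_{0,n}=1$, $\tilde M_{1,n}=M_n$; since \eqref{Msys} determines $M_{\al,n}$ uniquely from the single-current generators $\{M_{1,n}\}$ in the localization, this forces $\tilde M_{\al,n}=M_{\al,n}$. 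Inserting the induction hypothesis for $M_{\al-1,n+1}$ turns the defining $q^\al$-commutator into a nested commutator purely in the $M_{1,\cdot}$, which I would normal-order using \eqref{comM} and, where the index separation is too large, the exchange relation \eqref{quadratic}. As in the proof of \eqref{quadratic}, I expect the conserved quantity $C_1$ entering through Corollary~\ref{conscor} as the unit time shift $[C_1,M_n]=(1-q)M_{n+1}$ (with the backward shift $A^{-1}C_r$ for the opposite direction) to be the device that converts the verification into a recursion in the discrete time $n$, reducing it to finitely many anchoring configurations. I would also check consistency at the top of the range: for $\al=r+2$ the relation degenerates, via $M_{r+1,n+1}=A^{n+1}\Delta$ and the commutations \eqref{AMcom}, \eqref{DeltaM}, to $[M_{n-r-1},A^{n+1}\Delta]_{q^{r+2}}=0$, which holds identically and reproduces the rank condition \eqref{Ur}.

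The main obstacle is the bookkeeping of the ``far'' commutations. Once the induction hypothesis is substituted, one must reorder pairs such as $M_{1,n-\al+1}$ with $M_{\al-2,n}$ (and their time-shifts), whose index separation $p=\al-1$ lies just one step outside the range $|p|\le|\beta-\al|+1$ in which Lemma~\ref{farcommutation} supplies a clean scalar $q$-factor. For these pairs only the two-term exchange relation \eqref{quadratic} is available, and the real difficulty is to show that all the resulting correction terms telescope and cancel, leaving exactly $(-1)^\al(q-1)M_{\al,n}$. Organizing this cancellation, rather than any single algebraic identity, is where the work concentrates, and it is precisely why the time-translation structure of Section 3 is invoked: it trades an unwieldy direct expansion for an inductive recursion in which the troublesome terms are controlled by lower instances of the same relation.
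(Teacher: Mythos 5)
Your proposal has the right skeleton (induction on $\al$, a correct base case at $\al=2$, a valid uniqueness principle in the localization, and a correct consistency check at $\al=r+2$), but the inductive step is not actually carried out, and that is where the entire content of the theorem lives. Showing that the candidates $\tilde M_{\al,n}$ defined by the nested commutators satisfy the $M$-system \eqref{Msys} is essentially the hard direction of Theorem \ref{Main} (statement (i) of Theorem \ref{goback}), which the paper only manages to prove later with a completely different toolkit of constant-term identities; you defer it to a ``telescoping cancellation'' of correction terms that you yourself flag as the main obstacle and never organize. Your hope that the time-translation Hamiltonian $C_1$ converts the verification into a recursion in $n$ is also misplaced here: unlike the quadratic relation \eqref{quadratic}, where $[C_1,\phi_{n,\ell}]$ produces a clean inductive step, the relation \eqref{rightmal} is not generated by an $n$-recursion, and the paper's proof of this theorem does not use the conserved quantities at all.

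The paper's actual inductive step is a short, specific trick that your proposal misses. Set $\phi_{\al,n}=(-1)^\al(q-1)M_{\al,n}-[M_{\al-1,n-1},M_{n+\al-1}]_{q^\al}$ (the defect of \eqref{leftmal}, with base case $\al=1$, not $\al=2$), assume $\phi_{\al-1,m}=0$ for all $m$, and consider the combination $M_{\al-1,n+1}\phi_{\al,n}+\phi_{\al-1,n+1}M_{\al,n}$, whose second summand vanishes by induction. Expanding the first summand, the left factor $M_{\al-1,n+1}$ is chosen precisely so that the product $M_{\al-1,n+1}M_{\al-1,n-1}$ appears and can be eliminated by a single application of \eqref{Msys}; the resulting six terms regroup into three pairs, each killed by the far commutation relations \eqref{comM}. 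Invertibility of $M_{\al-1,n+1}$ in the localization then forces $\phi_{\al,n}=0$. Finally, \eqref{rightmal} is not proved by a parallel induction but obtained in one line from \eqref{leftmal} by applying the time-reversal anti-automorphism $\tau$ of Definition \ref{deftau}, which preserves \eqref{Msys}--\eqref{Mcom} and sends $M_{\al,n}\mapsto q^{-\al n}M_{\al,-n}$. To repair your proposal you would either need to supply the cancellation argument in full or replace the uniqueness route by a multiplication trick of this kind.
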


\begin{proof}
The proof is by induction on $\al$.
Define 
$$\phi_{\al,n} = (-1)^\al (q-1) M_{\al,n} - [M_{\al-1,n-1},M_{n+\al-1}]_{q^\al}.$$
If $\al=1$ this is zero due to the $q$-commutation relation
$$\phi_{1,n} = (1-q) M_{1,n} - M_{0,n-1}M_{1,n}+q M_{1,n} M_{0,n-1} = 0$$
because $M_{0,m}=1$ for all $m$.

The inductive step is as follows. Assume $\phi_{\al-1,m}=0$ for all $m\in \Z$. 
We will show that the quantity $M_{\al-1,n+1}\phi_{\al,n}+\phi_{\al-1,n+1} M_{\al,n}=0$ for all $n\in \Z$. 
This implies $\phi_{\al,n}=0$. To show this, write:
\begin{eqnarray*}
M_{\al-1,n+1}\phi_{\al,n} + \phi_{\al-1,n+1} M_{\al,n}&=& 
(-1)^\al (q-1)\left(M_{\al-1,n+1}M_{\al,n} -M_{\al-1,n+1}M_{\al,n}\right) \\ &
&\hskip-1in-M_{\al-1,n+1}[M_{\al-1,n-1},M_{n+\al-1}]_{q^\al} - [M_{\al-2,n},M_{n+\al-1}]_{q^{\al-1}}M_{\al,n}.
\end{eqnarray*}
The first line vanishes, and we expand the second line, denoting $m=n+\al-1$:
$$
M_{\al-1,n+1}M_{\al-1,n-1}M_{m} - q^\al M_{\al-1,n+1} M_{m}M_{\al-1,n-1}
+M_{\al-2,n} M_{m} M_{\al,n} - q^{\al-1} M_{m} M_{\al-2,n}M_{\al,n} .
$$
In the second term we use $M_{\al-1,n+1}M_{1,n+\al-1}=q^{\al-2}M_{1,n+\al-1}M_{\al-1,n+1}$. Then we use \eqref{Msys} to replace $M_{\al-1,n+1}M_{\al-1,n-1}$ by $q^{-\al+1} (M_{\al-1,n}^2-M_{\al-2,n}M_{\al,n})$ in the first and second terms. Expanding, and regrouping the six terms, we have
\begin{eqnarray*}
& &q^{-\al+1} M_{\al-1,n}^2M_m - q^{\al-1} M_m M_{\al-1,n}^2 \\
&+& q^{\al-1}M_m M_{\al-2,n}M_{\al,n} - q^{\al-1} M_m M_{\al-2,n} M_{\al,n} \\
&-&q^{-\al+1} M_{\al-2,n}M_{\al,n}M_m + M_{\al-2,n} M_m M_{\al,n}.
\end{eqnarray*}
The first line is $q^{-\al+1}[M_{\al-1,n}^2,M_{1,n+\al-1}]_{q^{2\al-2}}$, which vanishes because of the $q$-commutation \eqref{comM}, which can be used with $\al\to\al-1$, $p=\al-1$ and $\beta=1$. The second line is zero, and the third line is $-q^{-\al+1}M_{\al-2,n}[M_{\al,n},M_{1,n+\al-1}]_{q^{\al-1}}$, which again vanishes using the $q$-commutator \eqref{comM}.
This completes the proof of \eqref{leftmal}.
The second relation \eqref{rightmal} is obtained by acting on \eqref{leftmal} with the ``time-reversal" anti-automorphism $\tau$ of the algebra $\U_r$ defined as follows.
\begin{defn}\label{deftau}
The time-reversal anti-automorphism $\tau$ is defined on $\U_r$ via:
\begin{eqnarray*}
\tau(X Y)&=&\tau(Y)\tau(X) \qquad (X,Y\in \U_r)\\
\tau(M_{\al,n})&=& q^{-\al n} M_{\al,-n}\ ,\quad 
\tau(A)= A^{-1}\ ,\quad 
\tau(\Delta)= \Delta\ , \quad
\tau(q)=q
\end{eqnarray*}
\end{defn}
It is a straightforward to check that $\tau$ preserves the relations (\ref{Msys}-\ref{Mcom}), as well as \eqref{Ur}
and \eqref{degA}. We finally get:
\begin{eqnarray*}\tau\left([M_{\al-1,n-1}, M_{n+\al-1}]_{q^\al}\right)&=&[q^{-n-\al+1}M_{-n-\al+1},q^{(\al-1)(-n+1)}M_{\al-1,-n+1}]_{q^\al}\\
&=&q^{-\al n}[M_{-n-\al+1},M_{\al-1,-n+1}]_{q^\al}=(-1)^\al (q-1) q^{-n\al}M_{\al,-n}
\end{eqnarray*}
which boils down to \eqref{rightmal} upon changing $n\to -n$.
Theorem \ref{recurMalpha} follows.
\end{proof}

%
%

We note that both expressions in Theorem \ref{recurMalpha} are equivalent. By iteration, they lead to the nested commutator expression \eqref{firstMalpha} for $M_{\al,n}$.

\begin{example}\label{exfewms}
We have:
\begin{eqnarray*}
(q-1)M_{2,n}&=&M_{n-1}M_{n+1}-q^2M_{n+1}M_{n-1}\\
-(q-1)^2M_{3,n}&=&M_{n-2}M_nM_{n+2}-q^2M_{n-2}M_{n+2}M_{n}-q^3M_nM_{n+2}M_{n-2}
+q^5M_{n+2}M_{n}M_{n-2}\\
-(q-1)^3M_{4,n}&=&M_{n-3}M_{n-1}M_{n+1}M_{n+3}-q^2M_{n-3}M_{n-1}M_{n+3}M_{n+1}
-q^3M_{n-3}M_{n+1}M_{n+3}M_{n-1}\\
&& +q^5 M_{n-3}M_{n+3}M_{n+1}M_{n-1}-q^4 M_{n-1}M_{n+1}M_{n+3}M_{n-3}
+q^6M_{n-1}M_{n+3}M_{n+1}M_{n-3}\\
&& -q^7M_{n+1}M_{n+3}M_{n-1}M_{n-3}+q^9M_{n+3}M_{n+1}M_{n-1}M_{n-3}
\end{eqnarray*}
\end{example}

The current formulation \eqref{alpharec} of eq. \eqref{rightmal} allows to write an explicit constant term expression
for the current ${\mathfrak m}_\al(z)$  in terms of solely ${\mathfrak m}(z)$ as follows.

\begin{thm}\label{Malct}
We have the expression:
$$(-1)^{\frac{\al(\al-1)}{2}}(1-q)^{\al-1}\, {\mathfrak m}_\al(z)=
CT_{u_1,...,u_\al}\left( P_\al(u_1,...,u_\al)\, \prod_{i=1}^\al
{\mathfrak m}(u_i)\, \delta(u_1\cdots u_\al/z)\right)$$
with $CT$ as in \eqref{CTdef}, and
where the Laurent polynomials $P_\al(u_1,...,u_\al)$ are defined recursively via:
\begin{eqnarray*}
P_1(u_1)&=&1\\
P_{\al+1}(u_1,...,u_{\al+1})&=&\frac{(u_1)^\al}{u_2\cdots u_{\al+1}}\, P_\al(u_2,...,u_{\al+1})-
q^{\al+1}\frac{(u_{\al+1})^\al}{u_1\cdots u_{\al}}P_\al(u_1,...,u_\al)\quad (\al\geq 0)
\end{eqnarray*}
\end{thm}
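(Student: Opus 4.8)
The plan is to prove Theorem~\ref{Malct} by induction on $\al$, translating the recursion relation \eqref{rightmal} of Theorem~\ref{recurMalpha} into its current form \eqref{alpharec} and then iterating it. The base case $\al=1$ is immediate, since $P_1=1$ and the constant-term identity reduces to the defining expansion \eqref{mcurrents} of the current $\m(z)=\m_1(z)$ after integrating against $\delta(u_1/z)$. The substance of the argument is the inductive step: assuming the claimed constant-term formula holds for $\m_\al(z)$ with Laurent polynomial $P_\al$, I want to derive the corresponding formula for $\m_{\al+1}(z)$ with the polynomial $P_{\al+1}$ given by the stated recursion.

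\medskip

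First I would take the current version \eqref{alpharec} of the recursion relation, which expresses $\m_{\al+1}(z)$ (shifting $\al\to\al+1$) as a two-variable constant term in $\m(u_1)$ and $\m_\al(u_2)$:
\begin{equation*}
\m_{\al+1}(z)=CT_{v_1,v_2}\left(\frac{v_1^{\al}v_2^{-1}\m(v_1)\m_\al(v_2)-q^{\al+1}v_1^{-1}v_2^{\al}\m_\al(v_1)\m(v_2)}{(-1)^{\al+1}(q-1)}\,\delta(v_1v_2/z)\right).
\end{equation*}
Into each occurrence of $\m_\al$ I would substitute the inductive hypothesis, writing $\m_\al(v_2)$ as an $\al$-fold constant term against $\prod_{i}\m(u_i)\,\delta(u_1\cdots u_\al/v_2)$ weighted by $P_\al$, and similarly for $\m_\al(v_1)$. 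The key simplification is that the inner $\delta$-function collapses the $v$-integration: integrating $\delta(u_1\cdots u_\al/v_2)$ against $v_2$ sets $v_2=u_1\cdots u_\al$, so the factor $v_2^{-1}$ in the first term becomes $(u_1\cdots u_\al)^{-1}$ and $v_2^{\al}$ in the second becomes $(u_1\cdots u_\al)^{\al}$. After relabeling the $\al+1$ surviving integration variables consistently (putting the single $\m(v_1)$ factor into position $1$ or position $\al+1$ as appropriate), the two terms assemble into a single $(\al+1)$-fold constant term whose weight is exactly
\begin{equation*}
\frac{1}{(-1)^{\al+1}(q-1)}\left(\frac{u_1^{\al}}{u_2\cdots u_{\al+1}}P_\al(u_2,\ldots,u_{\al+1})-q^{\al+1}\frac{u_{\al+1}^{\al}}{u_1\cdots u_\al}P_\al(u_1,\ldots,u_\al)\right),
\end{equation*}
which, after absorbing the overall sign prefactor $(-1)^{\al(\al-1)/2}(1-q)^{\al-1}$ from the inductive hypothesis and matching it to $(-1)^{(\al+1)\al/2}(1-q)^{\al}$, reproduces the stated recursion for $P_{\al+1}$.

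\medskip

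The main obstacle, and the step requiring genuine care rather than routine bookkeeping, is the reindexing and the handling of noncommutativity: the substitution produces products of $\al+1$ copies of $\m$ evaluated at distinct variables, but in the second term the lone factor $\m(v_2)$ sits to the \emph{right} of the $\al$ factors coming from $\m_\al(v_1)$, whereas in the first term the lone factor $\m(v_1)$ sits to the \emph{left}. To combine them under a single symmetric constant-term integral I must verify that, after relabeling variables so that the ordered product reads $\prod_{i=1}^{\al+1}\m(u_i)$ in both terms, the weight functions add correctly without incurring extra $q$-factors from reordering the currents. This is legitimate precisely because the constant-term formalism integrates over all the $u_i$ symmetrically and the currents appear in a fixed linear order inside the integrand; one only needs to track which variable plays the role of the ``new'' integration variable and confirm that the position $u_1$ versus $u_{\al+1}$ in the two terms matches the index structure of the recursion. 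I would carry this out explicitly for small $\al$ (say $\al=1,2$) to fix the conventions, then write the general relabeling, after which the coefficient of $z^n$ on both sides agrees term by term and the induction closes.
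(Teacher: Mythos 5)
Your proposal is correct and follows exactly the paper's route: the paper's proof of Theorem~\ref{Malct} is precisely ``by induction, using \eqref{alpharec}'', and your inductive step — substituting the hypothesis into both occurrences of $\m_\al$ in the $(\al\to\al+1)$ form of \eqref{alpharec}, collapsing the inner $\delta$-function, and relabeling so the lone $\m$ lands in position $1$ (resp.\ $\al+1$) — reproduces the stated recursion for $P_{\al+1}$ with the correct sign bookkeeping $(-1)^{\al+\al(\al-1)/2}=(-1)^{\al(\al+1)/2}$. Your worry about extra $q$-factors from reordering currents is moot for exactly the reason you identify: no reordering is ever needed, only relabeling, since the ordered product is already $\prod_{i=1}^{\al+1}\m(u_i)$ in both terms.
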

\begin{proof} By induction, using \eqref{alpharec}.
\end{proof}

\begin{example}
The first few $P$'s read:
\begin{eqnarray*}
P_1(u_1)\!\!\!\!&=&\!\!\!\!1\\
P_2(u_1,u_2)\!\!\!\!&=&\!\!\!\!\frac{u_1}{u_2}-q^2\frac{u_2}{u_1}\\
P_3(u_1,u_2,u_3)\!\!\!\!&=&\!\!\!\!\frac{u_1^2}{u_3^2}-q^2\frac{u_1^2}{u_2^2}
-q^3\frac{u_3^2}{u_2^{2}}+q^5\frac{u_3^2}{u_1^{2}}\\
P_4(u_1,u_2,u_3,u_4)\!\!\!\!&=&\!\!\!\!\frac{u_1^3u_2}{u_3u_4^3}
-q^2\frac{u_1^3u_2}{u_3^3u_4}-q^3\frac{u_1^3u_4}{u_2u_3^3}+q^5\frac{u_1^3u_4}{u_2^3u_3}-q^4\frac{u_4^3u_1}{u_2u_3^3}+q^6\frac{u_4^3u_1}{u_2^3u_3}+q^7\frac{u_4^3u_3}{u_1u_2^3}-q^9\frac{u_4^3u_3}{u_1^3u_2}
\end{eqnarray*}
These lead immediately to the expressions of Example \ref{exfewms}.
\end{example}

\subsection{A quantum determinant expression for $M_{\al,n}$: proof of Theorem \ref{vanderq}}

In this section, we derive an alternative polynomial expression of $M_{\al,n}$ in terms of the $M_n$'s.
The disadvantage of the expression \eqref{firstMalpha} for $M_{\al,n}$ is that it only displays
$M_{\al,n}$ as a polynomial of the $M_n$'s with coefficients rational in $q$. This can be greatly improved by applying 
manipulations to the constant term identity of Theorem \ref{Malct}. These manipulations simply amount to rewritings 
of the constant term identities modulo the exchange relation \eqref{quadratic}, which states in
current form that
$(z-q w){\mathfrak m}(z){\mathfrak m}(w)$ is skew-symmetric under the interchange $z\leftrightarrow w$. 
The main result  is an expression for $M_{\al,n}$ as a polynomial of the $M_n$'s,
with coefficients in $\Z[q]$. The latter is equivalent to \eqref{firstMalpha} modulo the quadratic relations \eqref{quadratic}.

%
Theorem \ref{vanderq} is rephrased as follows.

\begin{thm}\label{vandermal}
For $\al=1,2,...,r+1$ defining:
\begin{equation}
\label{qdet}
{\mathfrak r}_\al(z):=CT_{u_1,\cdots,u_\al}\left( \Delta_q(u_1,...,u_\al) 
\,\prod_{i=1}^\al {\mathfrak m}(u_i)\, \delta(u_1...u_\al/z)\right)\ ,
\end{equation}
with $\Delta_q$ as in \eqref{qvander}, 
then we have
\begin{equation}{\mathfrak m}_\al(z)={\mathfrak r}_\al(z)\qquad (\al=1,2,...,r+1)\end{equation}
\end{thm}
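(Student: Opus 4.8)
The plan is to prove the equality of currents ${\mathfrak m}_\al(z) = {\mathfrak r}_\al(z)$ by showing that ${\mathfrak r}_\al(z)$ and the already-established expression ${\mathfrak m}_\al(z)$ from Theorem~\ref{Malct} agree as constant term identities, working modulo the quadratic exchange relation \eqref{quadratic}. Concretely, Theorem~\ref{Malct} gives ${\mathfrak m}_\al(z)$ as a constant term of $P_\al(u_1,\dots,u_\al)\prod_i {\mathfrak m}(u_i)\,\delta(u_1\cdots u_\al/z)$ up to the scalar $(-1)^{\al(\al-1)/2}(1-q)^{\al-1}$, while Theorem~\ref{vandermal} asserts the same current equals the constant term of $\Delta_q(u_1,\dots,u_\al)\prod_i {\mathfrak m}(u_i)\,\delta(u_1\cdots u_\al/z)$. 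So the heart of the matter is to show that, after multiplying by the product of currents and extracting the constant term, the two kernels $P_\al$ and $(-1)^{\al(\al-1)/2}(1-q)^{\al-1}\Delta_q$ produce the same result. They are \emph{not} equal as Laurent polynomials, but they should become equal once we are allowed to antisymmetrize against the exchange relation.

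The key mechanism is the observation, already emphasized before the statement of Theorem~\ref{vandermal}, that the exchange relation in current form says $(z-qw){\mathfrak m}(z){\mathfrak m}(w)$ is skew-symmetric under $z\leftrightarrow w$. Equivalently, inside a constant term $CT_{u_1,\dots,u_\al}$ against $\prod_i {\mathfrak m}(u_i)$, any kernel $K(u_1,\dots,u_\al)$ may be replaced by its image under the following symmetrization operation: for each transposition of adjacent variables $u_a \leftrightarrow u_{a+1}$, multiplying by the factor $\prod_{a<b}(u_a - q u_b)$ renders the product of currents into something on which the symmetric group acts, so that only the appropriately (anti)symmetric part of the kernel survives. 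First I would make this precise as a lemma: define the operator that sends a kernel $K$ to the sum over $\sigma\in S_\al$ of $\sigma$-permuted copies weighted by the skew-symmetrizing factors coming from repeated use of \eqref{quadratic}, and show that $K$ and this symmetrized kernel give the same constant term when integrated against $\prod_i {\mathfrak m}(u_i)$. Then I would compute the symmetrization of $\Delta_q(u_1,\dots,u_\al) = \prod_{a<b}(1-q u_b/u_a)$ and check it equals the symmetrization of $(-1)^{\al(\al-1)/2}(1-q)^{1-\al}P_\al$, reducing the theorem to a purely combinatorial identity among Laurent polynomials.

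The cleanest way to verify that combinatorial identity is by induction on $\al$, matching the recursion defining $P_\al$ in Theorem~\ref{Malct}. The inductive step peels off one variable: in $\Delta_q(u_1,\dots,u_{\al+1})$ one isolates the factors $\prod_{b}(1-q u_b/u_1)$ coupling $u_1$ to the rest, and compares with the two-term recursion for $P_{\al+1}$, which isolates $u_1$ and $u_{\al+1}$ with explicit monomial prefactors. The matching should work because, modulo the exchange relation, multiplying by $(1-q u_b/u_a)$ is exactly the operation that toggles the sign and the $q$-power under a variable transposition, so the single Vandermonde factor $\Delta_q$ encodes the entire alternating sum of $2^{\al-1}$ monomial terms that $P_\al$ lists explicitly (as one sees in the Example with $P_2,P_3,P_4$). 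I expect the main obstacle to be bookkeeping the $q$-powers and signs precisely: one must track how each application of \eqref{quadratic} to bring $u_a$ past $u_b$ contributes both a factor $-1$ and a power of $q$, and ensure these accumulate exactly to the coefficients $(-1)^{\cdots}q^{\cdots}$ appearing in the monomials of $P_\al$ and to the overall normalization $(-1)^{\al(\al-1)/2}(1-q)^{\al-1}$. A careful formulation of the symmetrization lemma, so that the constant-term equivalence is stated once and applied uniformly, is what will make this bookkeeping manageable rather than error-prone.
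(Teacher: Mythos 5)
Your starting point is sound: Theorem \ref{Malct} is available independently, the exchange relation \eqref{current_quadratic} does make $\prod_{a<b}(u_a-qu_b)\prod_i{\mathfrak m}(u_i)$ totally antisymmetric under relabelling, and the pairwise version of your ``symmetrization lemma'' is exactly the paper's Lemma \ref{skelem}, which is also the engine of the paper's proof. But there is a genuine gap where you defer to ``bookkeeping.'' First, the full $S_\al$ symmetrization you propose is delicate to even formulate: the constant term depends only on the antisymmetrization of $K/\prod_{a<b}(u_a-qu_b)$, and for $K=P_\al$ this is a rational function whose poles at $u_a=qu_b$ do not cancel term by term, so the claim that everything reduces to ``a purely combinatorial identity among Laurent polynomials'' is not justified as stated. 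Second, and more importantly, the kernels $\Delta_q$ and $(-1)^{\al(\al-1)/2}(1-q)^{1-\al}P_\al$ are \emph{not} related by a monomial-by-monomial matching of signs and $q$-powers: when one expands either side against the recursion (compare \eqref{contribs} in the paper), whole families of cross terms appear that are individually nonzero and are killed only by constant-term vanishing arguments (the paper's Lemma \ref{kels} and Corollary \ref{coruseful}, themselves built from repeated, carefully ordered applications of Lemma \ref{skelem} to specific adjacent pairs). Your proposal contains no substitute for these vanishing lemmas, and without them the inductive step does not close.

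For comparison, the paper does not compare the two kernels directly. It inducts on the current identity ${\mathfrak m}_\al={\mathfrak r}_\al$ itself: it feeds the inductive hypothesis into the recursion \eqref{alpharec} to get one constant-term expression for $(1-q){\mathfrak m}_\al(z)$, writes $(1-q){\mathfrak r}_\al(z)={\mathfrak r}_\al(z)-q\,{\mathfrak r}_\al(z)$ using two different kernel representations of ${\mathfrak r}_\al$ (Lemma \ref{otherqdet}), and then telescopes the difference to zero using Corollary \ref{coruseful}. If you want to rescue your route, you would need to (a) state the symmetrization reduction with the rational-function issue handled, and (b) actually prove the kernel identity, which in practice forces you to rederive the same shift-and-vanish lemmas; at that point you have reproduced the paper's argument in a heavier wrapper.
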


This immediately implies the following.

\begin{cor}
The constant term formula of Theorem \ref{vandermal} translates into an explicit expression for $M_{\al,n}$
in $\Z[q][M_{n-\al+1},M_{n-\al+2},...,M_{n+\al-1}]$. 
\end{cor}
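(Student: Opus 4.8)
The plan is to take the constant-term identity ${\mathfrak m}_\al(z)={\mathfrak r}_\al(z)$ of Theorem \ref{vandermal} as given and to extract the coefficient of $z^n$ while tracking exponents. First I would observe that the only $z$-dependence in \eqref{qdet} sits in the factor $\delta(u_1\cdots u_\al/z)=\sum_{m\in\Z}(u_1\cdots u_\al)^m z^{-m}$, so that passing to $M_{\al,n}=[z^n]{\mathfrak r}_\al(z)$ replaces this factor by the single monomial $(u_1\cdots u_\al)^{-n}$. What remains is the purely formal multiple constant term $CT_{u_1,\dots,u_\al}$ applied to $\Delta_q(u_1,\dots,u_\al)\,(u_1\cdots u_\al)^{-n}\prod_{i=1}^\al{\mathfrak m}(u_i)$.

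Next I would expand the $q$-Vandermonde factor over subsets of the pair set. Writing
\[
\Delta_q(u_1,\dots,u_\al)=\prod_{1\le a<b\le\al}\Bigl(1-q\,\tfrac{u_b}{u_a}\Bigr)=\sum_{S}(-q)^{|S|}\prod_{(a,b)\in S}\frac{u_b}{u_a},
\]
where $S$ runs over subsets of $\{(a,b):1\le a<b\le\al\}$, each summand is a single Laurent monomial $\prod_{i=1}^\al u_i^{d_i(S)}$ with $d_i(S)=\#\{a<i:(a,i)\in S\}-\#\{b>i:(i,b)\in S\}$. Since $\prod_i{\mathfrak m}(u_i)=\sum_{n_1,\dots,n_\al}M_{n_1}\cdots M_{n_\al}\prod_i u_i^{n_i}$ in this fixed order, the constant-term condition in each $u_i$ forces, for each $i$ and each $S$, the exponent balance $n_i+d_i(S)-n=0$, i.e. $n_i=n-d_i(S)$. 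Thus each $S$ contributes exactly one ordered monomial, and
\[
M_{\al,n}=\sum_{S}(-q)^{|S|}\,M_{n-d_1(S)}\,M_{n-d_2(S)}\cdots M_{n-d_\al(S)}.
\]
The coefficients $(-q)^{|S|}$ lie in $\Z[q]$, and collecting equal ordered monomials preserves $\Z[q]$ coefficients; the sum is finite since there are only $2^{\binom{\al}{2}}$ subsets.

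Finally I would read off the mode range. Since the index $i$ can be the larger member of at most $i-1$ chosen pairs and the smaller member of at most $\al-i$, one has $-(\al-i)\le d_i(S)\le i-1$, whence $n-d_i(S)\in[n-(i-1),\,n+(\al-i)]\subseteq[n-\al+1,\,n+\al-1]$ for every $i$ and every $S$. This places $M_{\al,n}$ in $\Z[q][M_{n-\al+1},\dots,M_{n+\al-1}]$, which is the Corollary. The only point needing care — and the main, if minor, obstacle — is the legitimacy of the constant-term bookkeeping: one must verify that after extracting $[z^n]$ and multiplying by the genuine (finite) Laurent polynomial $\Delta_q$, each of the $\al$ constant-term conditions selects a \emph{unique} mode, so that no surviving infinite summation over the bi-infinite currents remains and the result is a well-defined finite polynomial. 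The equivalence of this $\Z[q]$ expression with the rational-coefficient nested-commutator form \eqref{firstMalpha} holds modulo the exchange relations \eqref{quadratic}, as already noted in the text.
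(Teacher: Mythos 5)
Your proposal is correct and follows essentially the same route as the paper's (very brief) proof: extract the coefficient of $z^n$, expand $\Delta_q$ as a $\Z[q]$-combination of Laurent monomials, and observe that each resulting constant term picks out a unique ordered monomial in $M_{n-\al+1},\dots,M_{n+\al-1}$. Your explicit bookkeeping of the exponents $d_i(S)$ and the bound $-(\al-i)\le d_i(S)\le i-1$ correctly justifies the stated index range.
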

\begin{proof} 
By taking the coefficient of $z^n$ in \eqref{qdet}, and using the explicit expansion of $\Delta_q(u_1,...,u_\al)$
into a $\Z[q]$ sum of Laurent monomials in the variables $u_1,u_2,...,u_\al$, whose constant term in \eqref{qdet} 
yields monomials in the variables $M_{n-\al+1},M_{n-\al+2},...,M_{n+\al-1}$.
\end{proof}

\begin{example}\label{Mex}
Expanding the coefficient of $z^n$ in \eqref{qdet} for the first few values of $\al=2,3$ gives:
\begin{eqnarray}
M_{2,n}&=&M_n^2 -q M_{n+1}M_{n-1}\label{mextwo}\\
M_{3,n}&=&M_n^3-qM_{n+1}M_{n-1}M_n-q(1-q) M_{n+1}M_nM_{n-1}-q M_nM_{n+1}M_{n-1}\label{mexthree}\\
&&\qquad\qquad +q^2 M_{n+2}M_{n-1}^2+q^2M_{n+1}^2M_{n-2}-q^3M_{n+2}M_nM_{n-2}
\nonumber 
\end{eqnarray}
corresponding to the following expansions of the $q$-Vandermonde products:
\begin{eqnarray*}
\Delta_q(u_1,u_2)&=&1-q \frac{u_2}{u_1}\\
\Delta_q(u_1,u_2,u_3)&=&1-q \frac{u_2}{u_1}-q(1-q) \frac{u_3}{u_1}-q \frac{u_3}{u_2}
+q^2 \frac{u_2u_3}{u_1^2}+q^2 \frac{u_3^2}{u_1 u_2}-q^3\frac{u_3^2}{u_1^2}
\end{eqnarray*}
These expressions agree with those of Example \ref{exfewms} modulo the quadratic relations \eqref{quadratic}.
For instance, starting with the first line of Example \ref{exfewms}, we recover \eqref{mextwo} by computing:
\begin{eqnarray*}
(q-1)M_{2,n}&=&(q-1)M_{2,n}-([M_{n-1},M_{n+1}]_q+[M_n,M_n]_q)\\
&=&M_{n-1}M_{n+1}-q^2M_{n+1}M_{n-1}-(M_{n-1}M_{n+1}-qM_{n+1}M_{n-1}-(q-1)M_n^2)\\
&=&(q-1)(M_n^2-q M_{n+1}M_{n-1})
\end{eqnarray*}
where we have used the quadratic relation \eqref{quadratic} for $(n,p)\to (n-1,2)$.
\end{example}

\begin{remark}\label{classidet}
Theorem \ref{vandermal} is the quantum generalization of the discrete Wronskian determinant of the classical case (when $q=1$, $A=1$ and $\Delta=1$ and $Q_{\al,n}=M_{\al,n}$)  for which \cite{DFK10}:
$$M_{\al,n}=\det_{1\leq a,b\leq \al}\left(M_{n+b-a}\right)$$
leading to the classical generating function:
\begin{eqnarray*}
{m}_\al(z)&=&\sum_{n\in \Z} z^n M_{\al,n}=\sum_{n\in \Z}z^n
CT_{u_1,\cdots,u_\al}\left\{ \det_{1\leq a,b\leq \al}\left(\frac{1}{u_a^{n+b-a}}\sum_{m_a\in \Z}(u_a)^{m_a} M_{m_a}\right)
\right\} \\
&=&CT_{u_1,\cdots,u_\al} \left\{ \det_{1\leq a,b\leq \al}\left((u_a)^{a-b} {m}(u_a)\right)
\delta(u_1...u_\al/z)\right\}\\
&=&CT_{u_1,\cdots,u_\al} \left\{ \prod_{1\leq a<b\leq \al} \left(1-\frac{u_b}{u_a}\right)
\prod_{i=1}^\al {m}(u_i)
\delta(u_1...u_\al/z)\right\}
\end{eqnarray*}
where we have used the multi-linearity of the determinant.
\end{remark}


The remainder of this section is devoted to the proof of Theorem \ref{vandermal}.

Our main tool for constant term manipulations is the following.
\begin{lemma}\label{skelem}
For any Laurent polynomial $P(u,v)$ symmetric in $(u,v)$, we have
$$CT_{u,v}\left\{ (u-qv) P(u,v) \, {\mathfrak m}(u){\mathfrak m}(v) \delta(uv/z)\right\}=0 $$
for all $z$.
\end{lemma}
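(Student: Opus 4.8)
The plan is to reduce the statement to the fundamental exchange relation \eqref{current_quadratic} and exploit the symmetry of $P(u,v)$ via a single change of variables. The key observation is that the exchange relation, written as
\begin{equation*}
(z-qw)\,{\mathfrak m}(z){\mathfrak m}(w) = -(w-qz)\,{\mathfrak m}(w){\mathfrak m}(z),
\end{equation*}
says precisely that the current expression $(u-qv)\,{\mathfrak m}(u){\mathfrak m}(v)$ is \emph{skew-symmetric} under the interchange $u\leftrightarrow v$: swapping the two variables sends $(u-qv)\,{\mathfrak m}(u){\mathfrak m}(v)$ to $(v-qu)\,{\mathfrak m}(v){\mathfrak m}(u)$, which by the relation equals $-(u-qv)\,{\mathfrak m}(u){\mathfrak m}(v)$. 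This is the only input needed, and it is exactly the form in which the relation was recorded in the text preceding Lemma \ref{skelem}.

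First I would denote by $I$ the constant term $CT_{u,v}\{(u-qv)P(u,v)\,{\mathfrak m}(u){\mathfrak m}(v)\,\delta(uv/z)\}$ and observe that the constant-term functional $CT_{u,v}$, as the double contour integral \eqref{CTdef}, is manifestly invariant under relabeling the two integration variables $u\leftrightarrow v$. Next I would apply this relabeling to $I$. Under $u\leftrightarrow v$, the factor $P(u,v)$ is unchanged since $P$ is symmetric by hypothesis, the delta factor $\delta(uv/z)$ is unchanged since $uv$ is symmetric, and the product $(u-qv)\,{\mathfrak m}(u){\mathfrak m}(v)$ becomes $(v-qu)\,{\mathfrak m}(v){\mathfrak m}(u)$. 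Hence
\begin{equation*}
I = CT_{u,v}\left\{(v-qu)\,P(u,v)\,{\mathfrak m}(v){\mathfrak m}(u)\,\delta(uv/z)\right\}.
\end{equation*}
Now I would invoke the exchange relation to rewrite $(v-qu)\,{\mathfrak m}(v){\mathfrak m}(u) = -(u-qv)\,{\mathfrak m}(u){\mathfrak m}(v)$, which turns the right-hand side into $-I$. Therefore $I = -I$, giving $2I = 0$ and hence $I=0$.

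The only point requiring a little care—and what I expect to be the main (though minor) obstacle—is the legitimacy of the variable swap at the level of formal distributions: one must check that the relabeling of the two integration contours is valid for the doubly-infinite formal series involved, so that no convergence or reordering subtlety spoils the identity. Since everything reduces to matching coefficients of fixed monomials $z^n$, where only finitely many terms of $P$ contribute to each coefficient, the swap is the genuine symmetry of a finite sum of constant terms of Laurent polynomials in ${\mathfrak m}$, and there is no analytic issue. One should also note that the argument uses the exchange relation \eqref{current_quadratic} only, so Lemma \ref{skelem} holds in the full algebra $\U$ before imposing the rank restriction \eqref{Ur}, which is exactly the generality needed for the subsequent constant-term manipulations in the proof of Theorem \ref{vandermal}.
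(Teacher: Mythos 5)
Your proof is correct and is essentially identical to the paper's own argument: both establish that the integrand is skew-symmetric under $u\leftrightarrow v$ via the exchange relation \eqref{current_quadratic} while the constant term is invariant under that relabeling, forcing the expression to vanish. Your additional remarks on coefficient-wise finiteness and on the lemma holding already in $\U$ are accurate but not needed beyond what the paper states.
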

\begin{proof}
By the exchange relation \eqref{current_quadratic}, $(u-qv) P(u,v) \, {\mathfrak m}(u){\mathfrak m}(v)\delta(uv/z)$ is skew-symmetric 
under the interchange $u\leftrightarrow v$.
However the constant term is invariant under the change of variables $(u,v)\mapsto (v,u)$, it must therefore vanish.
\end{proof}

\begin{defn}
For short, we use the notation
$$\langle f(u_1,...,u_\al) \rangle_{\al}(u) := CT_{u_1,...u_\al}\left(\Delta_q(u_1,...,u_\al)\, f(u_1,...,u_\al) 
\, {\mathfrak m}(u_1)\cdots {\mathfrak m}(u_\al)\delta(u_1\cdots u_\al/u)\right) $$
\end{defn}

In the following, we'll make successive applications of Lemma \ref{skelem}. As an example, we have the following.

\begin{lemma}\label{kels}
For any Laurent series $f(x)\in \C((x))$, any symmetric Laurent polynomial $g$ of the variables $u_1,...,u_\al$,
and $1\leq i\leq \al$, we have the following constant term identities.
\begin{eqnarray}
\langle f(u_i) \, g \rangle_{\al}(u) &=& (-1)^{k} \Big\langle \frac{(u_{i-k})^k}{u_iu_{i-1}\cdots u_{i-k+1}} 
f(u_{i-k})\, g\Big\rangle_{\al}(u)
\quad (0\leq k\leq i-1)\label{down}\\
&=& (-1)^{p}\Big\langle  \frac{u_iu_{i+1}\cdots u_{i+p-1}}{(u_{i+p})^p}f(u_{i+p})\, g
\Big\rangle_{\al}(u) \quad (0\leq p\leq \al-i+1) \label{up}
\end{eqnarray}
\end{lemma}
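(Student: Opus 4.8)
The statement to prove is Lemma \ref{kels}: that inside the bracket $\langle\,\cdot\,\rangle_\al$, a factor $f(u_i)$ depending on a single variable may be ``shifted'' from index $i$ to a neighboring index $i-1$ (or $i+1$), at the cost of a monomial prefactor and a sign, and iterated $k$ (resp.\ $p$) times. The plan is to reduce both \eqref{down} and \eqref{up} to a single elementary shift ($k=1$ or $p=1$) and then iterate, since the iterated statement follows by straightforward induction on $k$ (resp.\ $p$) once the one-step version is in hand. The elementary shift, in turn, should come directly from Lemma \ref{skelem}, the skew-symmetry principle for the two-variable pair carrying the factor $(u-qv)$.

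\textbf{Key steps.}
First I would isolate the two adjacent variables involved in a single shift, say $u_i$ and $u_{i-1}$, and observe that the $q$-Vandermonde $\Delta_q(u_1,\dots,u_\al)$ of \eqref{qvander} contains exactly one factor $\bigl(1-q\,\tfrac{u_i}{u_{i-1}}\bigr)$ coupling this pair, namely the factor with $a=i-1$, $b=i$; up to the monomial $u_{i-1}^{-1}$ this is the skew-generating factor $(u_{i-1}-q\,u_i)$ required by Lemma \ref{skelem}. The remaining factors of $\Delta_q$ that involve $u_{i-1}$ or $u_i$ must then be combined with the would-be ``symmetric'' polynomial $P(u_{i-1},u_i)$ of Lemma \ref{skelem}; the point is that upon applying skew-symmetry in the pair $(u_{i-1},u_i)$ to $(u_{i-1}-q\,u_i)\,\m(u_{i-1})\,\m(u_i)$, the relation $\langle f(u_i)\,g\rangle_\al = -\langle \tfrac{u_{i-1}}{u_i} f(u_{i-1})\,g\rangle_\al$ (the $k=1$ case) emerges after accounting for how the other $\Delta_q$-factors transform. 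Concretely, I would write $\Delta_q = \bigl(1-q\tfrac{u_i}{u_{i-1}}\bigr)\cdot \Delta_q'$, where $\Delta_q'$ collects all remaining factors, verify that the combination $\Delta_q'\cdot f$-part splits into a piece symmetric in $(u_{i-1},u_i)$ plus corrections that re-expand as lower shifts, and invoke Lemma \ref{skelem}. The sign $(-1)^k$ and the monomial $(u_{i-k})^k/(u_i u_{i-1}\cdots u_{i-k+1})$ then accumulate multiplicatively under iteration.

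\textbf{Main obstacle.}
The delicate point is the bookkeeping of the extra $\Delta_q$-factors that couple $u_{i-1}$ and $u_i$ to the \emph{other} variables $u_j$: the naive application of Lemma \ref{skelem} requires the prefactor multiplying $(u_{i-1}-q u_i)\m(u_{i-1})\m(u_i)$ to be genuinely symmetric in $(u_{i-1},u_i)$, whereas $\Delta_q$ is only ``upper-triangular'' and not symmetric under $u_{i-1}\leftrightarrow u_i$. I expect the resolution to be that the product of the cross-terms is symmetric enough after absorbing the right power of $q$, or that the asymmetry produces precisely the monomial prefactor recorded in \eqref{down}–\eqref{up}; making this precise — identifying exactly which factors are inert and checking that the residual asymmetry reproduces the stated monomial and sign — is the heart of the argument. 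The endpoint constraints $0\le k\le i-1$ and $0\le p\le \al-i+1$ reflect that one can only shift as far as there are adjacent variables to shift into, so the induction terminates correctly at the boundaries. Once the one-step identity and its symmetric counterpart for the upward shift \eqref{up} are established, both displayed formulas follow by induction, with the upward version proved identically using the mirror factor and the reversed orientation of the $q$-Vandermonde.
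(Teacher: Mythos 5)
Your strategy --- reduce everything to a one-step shift of $f$ from $u_i$ to an adjacent variable via Lemma \ref{skelem}, then iterate and accumulate the sign and monomial --- is exactly the route the paper takes. However, the step you yourself call ``the heart of the argument'' is left as an expectation, and of the two resolutions you float, one is correct and one is not: the residual asymmetry of $\Delta_q$ does \emph{not} produce the monomial prefactor. Here is the missing computation. Write $\Delta_q=(1-q\,u_i/u_{i-1})\,\Delta_q'$. Then $\Delta_q'$ is \emph{exactly} symmetric under $u_{i-1}\leftrightarrow u_i$ --- no powers of $q$ need absorbing --- because for every third variable $u_j$ the two cross-factors pair up: $(1-q u_{i-1}/u_j)(1-q u_i/u_j)$ when $j<i-1$, and $(1-q u_j/u_{i-1})(1-q u_j/u_i)$ when $j>i$. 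The monomial and sign instead come from the \emph{choice} of symmetric input to Lemma \ref{skelem}: take
$P(u_{i-1},u_i)=g\,\bigl(u_{i-1}^{-1}f(u_i)+u_i^{-1}f(u_{i-1})\bigr)\,\Delta_q'$,
which is symmetric in the pair, and note that $(u_{i-1}-q u_i)\,\Delta_q'=u_{i-1}\Delta_q$, so that
$(u_{i-1}-q u_i)\,P=g\,\bigl(f(u_i)+\tfrac{u_{i-1}}{u_i}f(u_{i-1})\bigr)\,\Delta_q$.
Since $\mathfrak{m}(u_{i-1})\mathfrak{m}(u_i)$ sit adjacently in $\prod_a \mathfrak{m}(u_a)$, Lemma \ref{skelem} applies and gives $\langle f(u_i)\,g\rangle_\al+\langle \tfrac{u_{i-1}}{u_i}f(u_{i-1})\,g\rangle_\al=0$, which is precisely \eqref{down} for $k=1$; iterating along $(u_{i-2},u_{i-1})$, etc., and the mirror argument for \eqref{up}, then go through as you describe.

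Two smaller points you should also address. First, Lemma \ref{skelem} requires $P$ to be a Laurent \emph{polynomial}, whereas $f$ is only a Laurent series; the fix (which the paper records) is that only finitely many terms of $f$ can contribute to the constant term at fixed power of $z$, so one may truncate $f$ to a Laurent polynomial before applying the lemma. Second, the endpoint constraints $0\le k\le i-1$ and $0\le p\le \al-i+1$ are, as you say, just the statement that there are only so many adjacent variables to shift into, so no further argument is needed there. With the one-step identity supplied as above, your proof coincides with the paper's.
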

\begin{proof}
To prove \eqref{down},
we apply Lemma \ref{skelem} successively to pairs $(u_{i-1},u_i)$, $(u_{i-2},u_{i-1})$, ..., $(u_{i-k},u_{i-k+1})$.
Note that only finitely many terms in the series for $f$ contribute, so that only a truncated Laurent polynomial part of $f$
contributes.
The first application, with the Laurent polynomial 
$P(u_{i-1},u_i)=g(u_{i-1}^{-1}f(u_{i-1})+u_i^{-1}f(u_i))\Delta_q(u_1,...,u_\al)/(1-q u_i/u_{i-1})$, symmetric in $(u_{i-1},u_i)$,
allows to substitute:
$$ \left(1-q\frac{u_i}{u_{i-1}}\right)f(u_i){\mathfrak m}(u_{i-1}){\mathfrak m}(u_i)\to -\frac{u_{i-1}}{u_i} 
\left(1-q\frac{u_i}{u_{i-1}}\right)f(u_{i-1}){\mathfrak m}(u_{i-1}){\mathfrak m}(u_i)$$
into the constant term,
so that $\langle f(u_i)\rangle_{\al}(u)=-\langle \frac{u_{i-1}}{u_i}f(u_{i-1})\rangle_{\al}(u)$. The second application, with 
the Laurent polynomial $P(u_{i-2},u_{i-1})=g u_i^{-1}(\frac{u_{i-2}}{u_{i-1}}f(u_{i-2})+\frac{u_{i-1}}{u_{i-2}}f(u_{i-1}))\Delta_q(u_1,...,u_\al)/(1-q u_{i-1}/u_{i-2})$, symmetric in $(u_{i-2},u_{i-1})$, allows to substitute:
$$-\frac{u_{i-1}}{u_i}\left(1-q\frac{u_{i-1}}{u_{i-2}}\right)f(u_{i-1}){\mathfrak m}(u_{i-2}){\mathfrak m}(u_{i-1})\to \frac{(-u_{i-2})^2}{u_{i-1}u_i} \left(1-q\frac{u_{i-1}}{u_{i-2}}\right)f(u_{i-2}){\mathfrak m}(u_{i-2}){\mathfrak m}(u_{i-1})$$
Iterating this $k$ times results in \eqref{down}. 
The proof of \eqref{up} is similar, and proceeds by applying successively Lemma \ref{skelem} to the pairs
$(u_i,u_{i+1})$, $(u_{i+1},u_{i+2})$, ..., $(u_{i+k-1},u_{i+k})$.
\end{proof}

Most of the uses of Lemma \ref{skelem} are summed up in the following.

\begin{cor}\label{coruseful}
For any Laurent polynomial $g$ symmetric in the variables $u_1,u_2,...,u_\al$, we have:
\begin{equation}\label{range}
\langle (u_i)^m \, g\rangle_{\al}(u)=0 \qquad {\rm for}\quad m=-i+1,-i+2,...,-i+\al
\end{equation}
while for $m=-i$:
\begin{equation}\label{negbound}
\langle (u_i)^{-i} \, g\rangle_{\al}(u)=\left\langle \frac{(-1)^{i-1}}{u_1u_2\cdots u_i} \, g\right\rangle_\al(u)
\end{equation}
and for $m=\al-i+1$:
\begin{equation}\label{posbound}
\langle (u_i)^{\al-i+1} \, g\rangle_{\al}(u)=\left\langle (-1)^{\al-i}\ u_iu_{i+1}\cdots u_{\al}  \, g\right\rangle_\al(u)
\end{equation}
\end{cor}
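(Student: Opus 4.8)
The plan is to prove Corollary \ref{coruseful} as a direct consequence of Lemma \ref{kels}, treating the three claims in order of increasing delicacy. The underlying principle is that each application of Lemma \ref{skelem} lets us slide a power of one variable onto neighboring variables inside the bracket $\langle\,\cdot\,\rangle_\al$, at the cost of a sign and a monomial prefactor, and that certain choices of the exponent $m$ produce a closed loop that forces the constant term to vanish.

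First I would establish the vanishing range \eqref{range}. The idea is to take $f(u_i)=(u_i)^m$ and apply the two identities of Lemma \ref{kels} to relate $\langle (u_i)^m g\rangle_\al(u)$ to itself. Concretely, applying \eqref{down} with $k=1$ sends $(u_i)^m \mapsto -\,u_{i-1}^{-1}(u_{i-1})^m = -(u_{i-1})^{m-1}$ evaluated at the shifted index, and applying \eqref{up} with $p=1$ sends $(u_i)^m \mapsto -\,u_{i+1}^{-1}(u_{i+1})^{m}\cdot u_i/1$; iterating these moves around all $\al$ variables and using the symmetry of $g$ and of $\Delta_q$ under the combined relabeling produces a relation of the form $\langle (u_i)^m g\rangle_\al = \pm\langle (u_i)^m g\rangle_\al$ whenever $m$ lies in the stated interval $[-i+1,\,-i+\al]$, with the sign being $-1$; hence the bracket is zero. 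The bookkeeping here is exactly the statement that sliding the exponent all the way down (via \eqref{down} with $k=i-1$) and back up (via \eqref{up}) returns the same monomial with an odd total number of sign flips precisely in the claimed range.

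Next I would treat the two boundary cases \eqref{negbound} and \eqref{posbound}, which sit just outside the vanishing range. For \eqref{negbound}, take $m=-i$ and apply \eqref{down} with the maximal shift $k=i-1$: this sends $(u_i)^{-i}$ to $(-1)^{i-1}(u_1)^{-i}\,(u_i u_{i-1}\cdots u_2)^{-1}$ evaluated with the index pushed down to $1$, and one checks the resulting prefactor collapses to $(-1)^{i-1}/(u_1u_2\cdots u_i)$ after combining $(u_1)^{-i}$ with the chain of denominators, giving exactly the right-hand side of \eqref{negbound}. Symmetrically, for \eqref{posbound} with $m=\al-i+1$, I would apply \eqref{up} with the maximal shift $p=\al-i$ to push the power up to the variable $u_\al$; tracking the prefactor $u_iu_{i+1}\cdots u_{\al-1}/(u_\al)^{\al-i}$ against $(u_\al)^{\al-i+1}$ yields $(-1)^{\al-i}u_iu_{i+1}\cdots u_\al\, g$ inside the bracket, as claimed.

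The main obstacle I anticipate is not any single identity but the careful sign and index accounting in \eqref{range}: one must verify that traversing the full cycle of variables accumulates an \emph{odd} number of factors of $-1$ exactly for $m$ in the open range and an \emph{even} number at the two endpoints, so that the endpoints survive as nontrivial identities rather than collapsing to $0=0$. I would organize this by fixing $i$, writing the orbit of the monomial $(u_i)^m$ under the combined down/up moves explicitly, and checking the parity of the total sign as a function of $m$; the symmetry hypothesis on $g$ is what guarantees that the relabeled brackets coincide, and the slightly non-standard form of $\Delta_q$ in \eqref{qvander} must be checked to be compatible with these relabelings at each step.
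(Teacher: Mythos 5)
Your treatment of the two boundary identities \eqref{negbound} and \eqref{posbound} coincides with the paper's: a single application of \eqref{down} with the maximal shift $k=i-1$ (resp.\ of \eqref{up} with $p=\al-i$) for $f(x)=x^{-i}$ (resp.\ $f(x)=x^{\al-i+1}$) produces exactly the stated right-hand sides. The gap is in your argument for the vanishing range \eqref{range}. First, the elementary move is miscomputed: \eqref{down} with $k=1$ replaces $f(u_i)$ by $-\frac{u_{i-1}}{u_i}f(u_{i-1})$, so $(u_i)^m$ becomes $-(u_{i-1})^{m+1}u_i^{-1}$, a genuinely two-variable monomial, not $-(u_{i-1})^{m-1}$. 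Because of this, the orbit of $(u_i)^m$ under the down/up moves does not stay inside the family of single-variable powers, there is no ``closed loop'' of relabelled brackets, and the claimed relation $\langle(u_i)^m g\rangle_\al=-\langle(u_i)^m g\rangle_\al$ is not produced by the bookkeeping you describe; note also that Lemma \ref{kels} as stated only applies to integrands of the form $f(u_i)g$ with $g$ symmetric, and after one move the prefactor is no longer of that shape. A further sign that a pure sign-parity mechanism cannot work: it would apply uniformly to every $m$ in the interval, including $m=0$, where the bracket reduces to $\langle g\rangle_\al$, which for $g=1$ is ${\mathfrak m}_\al(z)$ and certainly nonzero; whatever kills the bracket must therefore see more than a residue class of a sign count.

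The mechanism actually used in the paper, and missing from your proposal, is not a global cycle but one normalization step followed by one \emph{local} application of Lemma \ref{skelem}. For $m<0$ one applies \eqref{down} once, with $k=-m-1$, to obtain $\langle(u_i)^m g\rangle_\al=(-1)^{m-1}\langle (u_iu_{i-1}\cdots u_{j})^{-1}g\rangle_\al$ with $j:=i+m+1$; if the chain stops short of the first variable, i.e.\ $j\geq 2$, the leading factor $u_j^{-1}$ combines with the factor $\bigl(1-q\,u_j/u_{j-1}\bigr)$ of $\Delta_q$ into $\frac{1}{u_{j-1}u_j}(u_{j-1}-qu_j)$ times an expression symmetric in $(u_{j-1},u_j)$, and Lemma \ref{skelem} applied to that adjacent pair annihilates the constant term. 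Symmetrically, for $m>0$ one applies \eqref{up} with $p=m-1$ to reach $(-1)^{m-1}u_iu_{i+1}\cdots u_{i+m-1}$, and if $i+m\leq\al$ the trailing factor $u_{i+m-1}$ combines with $\bigl(1-q\,u_{i+m}/u_{i+m-1}\bigr)$ into $(u_{i+m-1}-qu_{i+m})$ times a symmetric expression, again killed by Lemma \ref{skelem}. The vanishing is thus a local antisymmetry in the one adjacent pair sitting just beyond the end of the normalized monomial chain; it fails precisely when that pair does not exist, namely at $m=-i$ and $m=\al-i+1$, which is why those two values survive as the nontrivial identities \eqref{negbound} and \eqref{posbound}. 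Your write-up needs this extra application of Lemma \ref{skelem} spelled out; the cycle-parity heuristic should be discarded.
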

\begin{proof}
By direct application of Lemma \ref{kels}, for $f(x)=x^m$. 
For $m\leq 0$, starting from \eqref{down} with $k=-m-1$,
we have:
$$ \langle (u_i)^m g\, \rangle_{\al}(u)=(-1)^{m-1} \left\langle \frac{1}{u_iu_{i-1}\cdots u_{i+m+1}}\, g\right\rangle_{\al}(u) $$
For $m+i=0$ this gives \eqref{negbound}, while for $m+i>0$,
applying Lemma \ref{skelem} to the pair $(u_{i+m},u_{i+m+1})$ gives a zero answer, and \eqref{range} follows
for $m=-i+1,-i+2,...,0$. 
For $m>0$, 
starting from \eqref{up} with $p=m-1$, we have:
$$ \langle (u_i)^m g\, \rangle_{\al}(u)=(-1)^{m-1} \left\langle u_iu_{i+1}\cdots u_{i+m-1}\, g\right\rangle_{\al}(u) $$
For $m+i=\al+1$ this gives \eqref{posbound}, while for $m+i\leq \al$,
applying Lemma \ref{skelem} to the pair $(u_{i+m-1},u_{i+m})$ gives a zero answer, and
\eqref{range} follows
for $m=1,2,...,\al-i$. 
\end{proof}

\begin{lemma}\label{otherqdet}
The defining relation \eqref{qdet} implies the following:
\begin{equation}
\label{qdetprime}
{\mathfrak r}_\al(z)=CT_{u_1,\cdots,u_{\al}}\left( \frac{(-u_1)^{\al-1}}{u_2u_3\cdots u_{\al}}\,
\Delta_q(u_1,u_2,...,u_\al)\, 
\left(\prod_{i=1}^{\al}{\mathfrak m}(u_i)\right) \delta(u_1...u_{\al}/z)\right)
\end{equation}
\end{lemma}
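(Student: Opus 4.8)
The plan is to recognize the stated identity as a single specialization of the constant-term relation \eqref{down} of Lemma \ref{kels}, with essentially no further work. First observe that, in the bracket notation of the preceding definition, the defining relation \eqref{qdet} reads $\mathfrak{r}_\al(z)=\langle 1\rangle_{\al}(z)$, while the right-hand side of the claimed identity \eqref{qdetprime} is precisely $\langle (-u_1)^{\al-1}/(u_2u_3\cdots u_{\al})\rangle_{\al}(z)$. Hence it suffices to establish the constant-term identity $\langle 1\rangle_{\al}(u)=\langle (-u_1)^{\al-1}/(u_2u_3\cdots u_{\al})\rangle_{\al}(u)$ for all $u$.

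I would obtain this by invoking \eqref{down} with the choices $f\equiv 1$ and $g\equiv 1$ (both admissible, since the constant function $1$ is a symmetric Laurent polynomial) and with the index specialization $i=\al$, $k=\al-1$, which respects the permitted range $0\leq k\leq i-1$. For these values the numerator $(u_{i-k})^{k}$ becomes $(u_1)^{\al-1}$, the monomial denominator $u_iu_{i-1}\cdots u_{i-k+1}$ becomes $u_\al u_{\al-1}\cdots u_2=u_2u_3\cdots u_\al$, the function factor $f(u_{i-k})=f(u_1)$ equals $1$, and the overall prefactor is $(-1)^{k}=(-1)^{\al-1}$. Absorbing the sign into the numerator via $(-1)^{\al-1}(u_1)^{\al-1}=(-u_1)^{\al-1}$ yields exactly $\langle 1\rangle_{\al}(u)=\langle (-u_1)^{\al-1}/(u_2u_3\cdots u_{\al})\rangle_{\al}(u)$, which is the assertion of the lemma.

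Conceptually, \eqref{down} transports the trivial insertion attached to $u_\al$ successively down through the pairs $(u_{\al-1},u_\al),(u_{\al-2},u_{\al-1}),\dots,(u_1,u_2)$, each step exploiting the skew-symmetry of $(u-qw){\mathfrak m}(u){\mathfrak m}(w)$ recorded in Lemma \ref{skelem}; after all $\al-1$ steps the weight has been relocated entirely onto $u_1$, producing precisely the asymmetric prefactor of \eqref{qdetprime}. Because the argument is a single application of an already-established lemma, there is no substantive obstacle: the only point demanding care is the index and sign bookkeeping, namely verifying that $i=\al$, $k=\al-1$ falls within the admissible range and that the resulting monomial factors and overall sign reproduce the target expression exactly.
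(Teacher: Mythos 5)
Your proof is correct and is exactly the paper's own argument: the paper's proof reads ``Direct application of \eqref{down}, with $f(x)=g=1$, $i=\al$ and $k=\al-1$,'' and your index, sign, and monomial bookkeeping faithfully carries out that specialization.
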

\begin{proof}
Direct application of \eqref{down}, with $f(x)=g=1$, $i=\al$ and $k=\al-1$.
\end{proof}

We will now prove Theorem \ref{vandermal}, by induction on $\al$. 
For short, denote for $1\leq a<b\leq \al$ by:
$$\Delta_q[a,b]:=\Delta_q(u_a,u_{a+1},...,u_b)
=\prod_{a\leq i<j \leq b}\left(1-q \frac{u_j}{u_i}\right) $$
The theorem holds clearly for $\al=1$. Assume the result holds for $\al-1$.
We use the recursion relation \eqref{alpharec} and the recursion hypothesis
to write:
\begin{eqnarray*}
&&\!\!\!\!\!\!\!\!\!\!\!\!\!\!\!(1-q){\mathfrak m}_\al(z)=
CT_{u,v}\left(
\left(\frac{(-u)^{\al-1}}{v}\, {\mathfrak m}(u){\mathfrak m}_{\al-1}(v)
-q^\al \frac{(-v)^{\al-1}}{u}\, {\mathfrak m}_{\al-1}(u){\mathfrak m}(v)\right)\, \delta(uv/z)\right)\\
&=&CT_{u_1,...,u_\al}\left\{\left(\frac{(-u_1)^{\al-1}}{u_2u_3\cdots u_\al}\,
\Delta_q[2,\al]-q\,\frac{(-q u_\al)^{\al-1}}{u_1...u_{\al-1}}
\Delta_q[1,\al-1]\right)
\prod_{a=1}^{\al} {\mathfrak m}(u_a)\delta(u_1 u_2\cdots u_{\al}/z)\right\}
\end{eqnarray*}
On the other hand, let us write $(1-q){\mathfrak r}_\al(z)={\mathfrak r}_\al(z)- q {\mathfrak r}_\al(z)$
by using \eqref{qdetprime} for the first term and \eqref{qdet} for the second, and compute:
\begin{eqnarray*}
&&(1-q)({\mathfrak r}_\al(z)-{\mathfrak m}_\al(z))=CT_{u_1,...,u_\al}\Bigg\{\Bigg(\frac{(-u_1)^{\al-1}}{u_2u_3\cdots u_\al}\, \Delta_q[2,\al]\,
\left(\prod_{a=2}^{\al} \left(1-q \frac{u_a}{u_1}\right)-1\right)\\
&&\qquad -q\, \Delta_q[1,\al-1]\,
\left( \prod_{a=1}^{\al-1} \left(1-q \frac{u_\al}{u_a}\right)-\frac{(-q u_\al)^{\al-1}}{u_1...u_{\al-1}}\right)\Bigg)
\prod_{a=1}^{\al} {\mathfrak m}(u_a)\delta(u_1 u_2\cdots u_{\al}/z)\Bigg\}
\end{eqnarray*}
Let us expand the first factor as:
\begin{eqnarray}\frac{(-u_1)^{\al-1}}{u_2u_3\cdots u_\al} \left(\prod_{a=2}^{\al} \left(1-q \frac{u_a}{u_1}\right)-1\right)
&=&\prod_{i=2}^\al \left( q-\frac{u_1}{u_i}\right)-\frac{(-u_1)^{\al-1}}{u_2u_3\cdots u_\al}\nonumber \\
&=&q\sum_{k=0}^{\al-2}\frac{(-u_1)^k}{u_\al u_{\al-1}\cdots u_{\al-k+1}}\prod_{i=2}^{\al-k-1} \left(q-\frac{u_1}{u_i}\right)
\label{contribs}
\end{eqnarray}
%
and consider the contribution corresponding to some $k>0$ in the summation, as a function
of  $u_{\al+1-k},u_{\al-k}$: it is the product of $\Delta_q[2,\al]=(1-qu_{\al+1-k}/u_{\al-k})\times$ 
a symmetric expression in $(u_{\al+1-k},u_{\al-k})$, by a Laurent polynomial of the form $P_k/u_{\al+1-k}$ where $P_k$ is independent of $u_{\al+1-k}$ and $u_{\al-k}$.
We may apply Lemma \ref{skelem} to $(u,v)=(u_{\al-k},u_{\al+1-k})$ to conclude that
this contribution to the constant term vanishes, as it reads $(u_{\al-k}-qu_{\al-k+1})$ times a symmetric expression
in $(u_{\al-k},u_{\al+1-k})$. We are therefore left with the 
single contribution corresponding to $k=0$:
$$ q\, \Delta_q[2,\al]\, \prod_{a=2}^{\al-1}\left(q-\frac{u_1}{u_a}\right) =q \frac{(-u_1)^{\al-2}}{u_2\cdots u_{\al-1}} \, \frac{\Delta[1,\al]}{1-q\frac{u_\al}{u_1}}$$

Analogously, the second factor in \eqref{contribs} is expanded as:
$$
\left(1-q \frac{u_\al}{u_a}\right) -\frac{(-q u_\al)^{\al-1}}{u_1\cdots u_{\al-1}}
=\sum_{k=0}^{\al-2} 
\frac{(-q u_\al)^{k}}{u_{\al-1}u_{\al-2}\cdots u_{\al-k}}\prod_{a=1}^{\al-2-k} 
\left(1-q \frac{u_\al}{u_a}\right)
$$
Again, the contribution corresponding to some $k>0$ in the summation, viewed as a function
of $(u_{\al-k-1},u_{\al-k})$ is the product of  $\Delta_q[1,\al-1]$ by a Laurent polynomial of the form
$Q_k/u_{\al-k}$, where $Q_k$ is independent of $u_{\al-k-1}$ and $u_{\al-k}$. 
Again, this contribution to the constant term vanishes by applying Lemma \ref{skelem}
to $(u,v)=(u_{\al-k-1},u_{\al-k})$, and we are left with the $k=0$ contribution:
$$q\, \Delta_q[1,\al-1] \, \prod_{a=1}^{\al-2}\left(1-q \frac{u_\al}{u_a}\right)=q\, \frac{\Delta_q[1,\al]}{1-q\frac{u_\al}{u_{\al-1}}}$$
We arrive at:
$$(1-q)({\mathfrak r}_\al(z)-{\mathfrak m}_\al(z))=q\, 
\Big\langle \frac{(-u_1)^{\al-2}}{u_2\cdots u_{\al-1}} \, \frac{1}{1-q\frac{u_\al}{u_1}}
-\frac{1}{1-q\frac{u_\al}{u_{\al-1}}}\Big\rangle_\al(z)
$$
We finally apply the result of Lemma \ref{kels}, \eqref{down} for $f(x)=\frac{1}{1-q \frac{u_{\al}}{x}}$, $g=1$
and $i=\al-1$, $k=\al-2$, to compute:
$$\Big\langle \frac{1}{1-q\frac{u_\al}{u_{\al-1}}}-\frac{(-u_1)^{\al-2}}{u_2\cdots u_{\al-1}}\, \frac{1}{1-q\frac{u_\al}{u_{1}} }
\Big\rangle_{\al}(z)=0$$
We conclude that ${\mathfrak r}_\al(z)-{\mathfrak m}_\al(z)=0$, thus completing the proof of Theorem \ref{vandermal}.

\subsection{Proof of Theorem \ref{Main}}

This section is devoted to the proof of Theorem \ref{Main}. We have already shown that the generators of $\U_r$
obey the quadratic relation \eqref{quadratic}. Conversely, let us assume we have a set of generators
$M_n$ obeying the quadratic relations \eqref{quadratic}. Moreover, let us {\it define} the quantities $M_{\al,n}$
via the quantum determinant expression of Theorem \ref{vanderq}. As mentioned before, this definition
is equivalent to \eqref{firstMalpha}, but is much more convenient for current manipulations and proofs.
Theorem \ref{Main} follows from the following:

\begin{thm}\label{goback}
We consider generators $M_n$ obeying the quadratic relations \eqref{quadratic}, and their associated 
current ${\mathfrak m}(z)$. For $\al\geq 0$, we furthermore {\it define} a family of degree $\al$
polynomials $M_{\al,n}\in \Z[q][M_{n-\al+1},...,M_{n+\al-1}]$ via the formula of Theorem \ref{vanderq}, namely:
$$M_{\al,n}:=CT_{u_1,\cdots,u_\al}\left( \Delta_q(u_1,...,u_\al) 
\,\prod_{i=1}^\al {\mathfrak m}(u_i)\, (u_1...u_\al)^{-n}\right)$$
with $M_{0,n}:=1$.
Then the following statements hold true, independently of $r$:

\noindent{ (i) }\ The polynomials $M_{\al,n}$ obey the renormalized quantum Q-system relations \eqref{Msys},
namely we have the following identity:
\begin{eqnarray}
&&\qquad CT_{u_1,...,u_{2\al}}\Big\{ \Big( \Delta_q[1,\al+1]\Delta_q[\al+2,2\al]\label{tobeproved}\\
&&-\left. \left.\left(1-q^\al \frac{u_{\al+1}u_{\al+2}\cdots u_{2\al}}{u_1 u_2 \cdots u_\al}\right) 
\Delta_q[1,\al]\Delta_q[\al+1,2\al]\right)\prod_{i=1}^{2\al} {\mathfrak m}(u_i) \delta(u_1\cdots u_{2\al}/z)  \right\}=0 
\nonumber 
\end{eqnarray}

\noindent{ (ii) }\ The polynomials $M_{\al,n}$ obey the quantum commutation relations \eqref{Mcom},
namely for all $\al,\beta\geq 0$ and $\epsilon=0,1$:
\begin{eqnarray}
&&CT_{u_1,...,u_{\al+\beta}}\Big\{ \Big( \frac{\Delta_q[1,\al]\Delta_q[\al+1,\al+\beta]}{(u_{\al+1}u_{\al+2}\cdots u_{\al+\beta})^\epsilon}\label{tobeproved2}\\
&&\qquad\qquad -\left. \left. q^{{\rm Min}(\al,\beta)\epsilon} \frac{\Delta_q[1,\beta]\Delta_q[\beta+1,\al+\beta]}{(u_{1}u_{2}\cdots u_{\beta})^\epsilon}\right)\prod_{i=1}^{\al+\beta} {\mathfrak m}(u_i) \delta(u_1\cdots u_{2\al}/z)  \right\}=0 
\nonumber 
\end{eqnarray}

\end{thm}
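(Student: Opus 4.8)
The plan is to prove both (i) and (ii) entirely by constant-term manipulation, the only structural input being the skew-symmetry of $(u-qv)\m(u)\m(v)$ recorded in Lemma \ref{skelem}, together with its consequences Lemma \ref{kels} and Corollary \ref{coruseful}. It is essential to observe that these three results were derived from the quadratic relation \eqref{quadratic} (equivalently \eqref{current_quadratic}) \emph{alone}, so they remain legitimately available in the present setting, where we assume only \eqref{quadratic} and \emph{define} the $M_{\al,n}$ through \eqref{qdet}; no appeal to the $M$-system \eqref{Msys} is made, so the argument is not circular. Both identities \eqref{tobeproved} and \eqref{tobeproved2} have the common shape of a difference of two products of partial $q$-Vandermondes $\Delta_q[a,b]$, weighted by $\prod_i\m(u_i)\,\delta$, asserted to integrate to zero, and the whole task is to show that this difference is annihilated by the constant term.

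For part (i) I would first isolate the single variable $u_{\al+1}$ that is grouped differently in the two terms. Factoring out the Vandermonde factors common to both terms gives, with $w:=u_{\al+1}$,
\[
\Delta_q[1,\al+1]\Delta_q[\al+2,2\al]=\Delta_q[1,\al]\,\Delta_q[\al+2,2\al]\prod_{i=1}^{\al}\Bigl(1-q\tfrac{w}{u_i}\Bigr),
\]
\[
\Delta_q[1,\al]\Delta_q[\al+1,2\al]=\Delta_q[1,\al]\,\Delta_q[\al+2,2\al]\prod_{j=\al+2}^{2\al}\Bigl(1-q\tfrac{u_j}{w}\Bigr),
\]
so that \eqref{tobeproved} is reduced to $CT\bigl(\Delta_q[1,\al]\Delta_q[\al+2,2\al]\,B(w)\,\prod_i\m(u_i)\,\delta\bigr)=0$, where
\[
B(w)=\prod_{i=1}^{\al}\Bigl(1-q\tfrac{w}{u_i}\Bigr)-\Bigl(1-q^{\al}\tfrac{w\,u_{\al+2}\cdots u_{2\al}}{u_1\cdots u_\al}\Bigr)\prod_{j=\al+2}^{2\al}\Bigl(1-q\tfrac{u_j}{w}\Bigr).
\]
The left block $u_1,\dots,u_\al$ and the right block $u_{\al+2},\dots,u_{2\al}$ carry full Vandermondes and mutually adjacent currents, so Lemma \ref{skelem} and hence Corollary \ref{coruseful} apply within each block. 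I would then expand $B(w)$ by the telescoping-plus-skelem device of the proof of Theorem \ref{vandermal} (cf.\ the expansion \eqref{contribs}), writing the products as sums whose generic summands are, viewed as functions of a suitable adjacent pair of block variables, of the form $(u-qv)\times(\text{symmetric})$ and therefore killed under $CT$ by Lemma \ref{skelem}. This should leave only the extreme terms, which the boundary relations \eqref{negbound} and \eqref{posbound} convert into explicit monomials, and the proof concludes by checking that these boundary contributions cancel in pairs, exactly as the surviving $k=0$ terms were cancelled at the end of the proof of Theorem \ref{vandermal}.

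Part (ii) is handled by the same machinery. Here the two terms of \eqref{tobeproved2} are two different consecutive-block groupings of the same $\al+\beta$ variables — Vandermonde on the first $\al$ and last $\beta$ versus first $\beta$ and last $\al$ — together with the monomial weights $(u_{\al+1}\cdots u_{\al+\beta})^{-\epsilon}$ and $(u_1\cdots u_\beta)^{-\epsilon}$ produced by the shift $n\mapsto n+\epsilon$. I would pass from one grouping to the other by migrating one block past the other through a sequence of adjacent transpositions, each realized by Lemma \ref{skelem} (equivalently through \eqref{down}/\eqref{up}). For $\epsilon=0$ the block migration directly identifies the two constant terms, with no extra scalar, giving the commutativity of $M_{\al,n}$ and $M_{\beta,n}$; for $\epsilon=1$ the same migration, processed through the boundary relations \eqref{negbound} and \eqref{posbound}, generates precisely the scalar $q^{\min(\al,\beta)}$ demanded by \eqref{Mcom}, the exponent $\min(\al,\beta)$ emerging from these boundary contributions.

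The hard part, in both cases, is the boundary bookkeeping rather than the bulk cancellation: Lemma \ref{skelem} disposes of the generic terms mechanically, but the surviving extreme terms produce monomials through \eqref{negbound}/\eqref{posbound}, and one must verify that the leftovers from the two halves of each difference agree, down to the exact power of $q$. A constraint to honor throughout is that Lemma \ref{skelem} may only be applied to currents $\m(u_i)\m(u_{i+1})$ that are genuinely adjacent in the fixed ordering of the product; this is what dictates the blockwise application in (i) and the one-step-at-a-time block migration in (ii), and it is the reason the two superficially symmetric groupings of Vandermondes are not interchangeable by inspection but require the telescoping argument.
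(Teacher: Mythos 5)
Your toolbox is the right one, and your observation that Lemma \ref{skelem}, Lemma \ref{kels} and Corollary \ref{coruseful} depend only on \eqref{quadratic} (so the argument is not circular) is exactly what the paper relies on. But the organizational device you propose for (i) goes in the wrong direction and creates a genuine gap. You factor the difference in \eqref{tobeproved} as $\Delta_q[1,\al]\Delta_q[\al+2,2\al]\cdot B(w)$, i.e.\ you divide down to the greatest common divisor of the two Vandermonde products, and then want to kill the generic terms of the expansion of $B(w)$ by Lemma \ref{skelem}. This cannot work as stated: once only the g.c.d.\ is retained, the middle variable $w=u_{\al+1}$ belongs to neither Vandermonde block, so neither of the adjacent pairs $(u_\al,u_{\al+1})$, $(u_{\al+1},u_{\al+2})$ carries the factor $(1-qu_{j+1}/u_j)$ that Lemma \ref{skelem} requires, and the generic monomials $(-qw)^k e_k(u_1^{-1},\dots,u_\al^{-1})$ in the expansion of $\prod_{i\le\al}(1-qw/u_i)$ are symmetric within each block --- there is no adjacent pair against which they are skew. (Corollary \ref{coruseful} does not rescue this either: it kills powers $u_i^m$ only of a variable \emph{inside} the Vandermonde block, and even after enlarging to $\Delta_q[1,\al+1]$ the positive powers $w^k$ of the \emph{last} variable of a block are not in the vanishing range \eqref{range}.) The paper does the opposite: it multiplies \emph{up} to the common multiple $\Delta_q[1,\al+1]\Delta_q[\al+1,2\al]$ and evaluates its constant term twice --- once by expanding $\prod_{j\ge\al+2}(1-qu_j/u_{\al+1})$, whose monomials are \emph{negative} powers $u_{\al+1}^{-k}$ of the variable sitting at the \emph{top} of the block $\Delta_q[1,\al+1]$ and hence vanish by \eqref{range}, leaving the first term of \eqref{tobeproved}; and once by expanding $\prod_{i\le\al}(1-qu_{\al+1}/u_i)$ relative to the block $\Delta_q[\al+1,2\al]$, where only $k=0$ and $k=\al$ survive and \eqref{posbound} produces exactly the factor $1-q^\al u_{\al+1}\cdots u_{2\al}/(u_1\cdots u_\al)$ of the second term. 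The sign of the exponents relative to the position of $u_{\al+1}$ in each block is what makes the middle terms vanish, and that structure is only visible at the level of the common multiple, not the common divisor.

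The same issue affects your plan for (ii). Lemma \ref{skelem} is a vanishing statement, not a transposition rule: the exchange relation lets you swap adjacent currents only at the cost of $-(w-qz)/(z-qw)$, which is not a Laurent polynomial, so ``migrating one block past the other through adjacent transpositions'' is not a move the machinery supplies. The paper again introduces the common multiple $\Delta_q[1,\al+\beta]/\prod_{i\le\al,\,j\ge\beta+1}(1-qu_j/u_i)$ of the two Vandermonde products and reduces its constant term to each of the two terms of \eqref{tobeproved2}; for $\epsilon=0$ no scalar appears, while for $\epsilon=1$ the factor $q^{\min(\al,\beta)}$ comes out of a delicate boundary analysis via \eqref{negbound} and \eqref{posbound} that splits into the three cases $\al=\beta$, $\al<\beta$, $\al>\beta$ (the last obtained from the second by dividing by $z=u_1\cdots u_{\al+\beta}$ inside the constant term). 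Your instinct that the hard part is the boundary bookkeeping is right, but that bookkeeping only becomes tractable after the common-multiple step, which is the idea your proposal is missing.
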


\subsubsection{Proof of statement (i)}

The proof of statement (i) \eqref{tobeproved} goes by rewriting the contribution of $\Delta_q[1,\al+1]\Delta_q[\al+1,2\al]$ in two different ways,
both using only Lemma \ref{skelem}.

Let us first rewrite
\begin{eqnarray*}
&&\Delta_q[1,\al+1]\Delta_q[\al+1,2\al]=\Delta_q[1,\al+1]\Delta_q[\al+2,2\al]\prod_{j=\al+2}^{2\al} \left(1-q\frac{u_j}{u_{\al+1}}\right)\\
&&\qquad =\Delta_q[1,\al+1]\Delta_q[\al+2,2\al]\, \sum_{k=0}^{\al-1} (-q)^k\sum_{\al+2\leq j_1<\cdots<j_k\leq 2\al}
\frac{u_{j_1}\cdots u_{j_k}}{(u_{\al+1})^k}
\end{eqnarray*}
All the contributions with $k>0$ are readily seen to vanish, by use of Corollary \ref{coruseful} for $\al\to \al+1$,
$m=-k$ and $i=\al+1$. We are left with the contribution of $k=0$, namely 
\begin{equation}\label{finalone} \Delta_q[1,\al+1]\Delta_q[\al+2,2\al] \end{equation}


Similarly, we write:
\begin{eqnarray*}
&&\Delta_q[1,\al+1]\Delta_q[\al+1,2\al]=\Delta_q[1,\al]\Delta_q[\al+1,2\al]\prod_{i=1}^{\al} 
\left(1-q\frac{u_{\al+1}}{u_i}\right)\\
&&\qquad =\Delta_q[1,\al]\Delta_q[\al+1,2\al] \sum_{k=0}^{\al} \sum_{1\leq i_1<\cdots <i_k\leq \al}
\frac{(-qu_{\al+1})^k}{u_{i_1}u_{i_2}\cdots u_{i_k}}
\end{eqnarray*}
Again, using Corollary \ref{coruseful} for $m=k$, $i=1$ (and renaming variables $u_i\to u_{i+\al}$), 
we see that all the terms with $0<k<\al$ 
in the sum contribute zero to the constant term, and we are left with the two contributions $k=0$ and $k=\al$:
$$\Delta_q[1,\al]\Delta_q[\al+1,2\al] \left\{ 1+\frac{(-qu_{\al+1})^\al}{u_{\al}u_{\al-1}\cdots u_{1}}\right\}$$
The second term is rewritten using \eqref{posbound} for $i=1$, together with a renaming of variables $u_i\to u_{i+\al}$ ,
to finally get:
\begin{equation}\label{finaltwo} \Delta_q[1,\al]\Delta_q[\al+1,2\al] \left\{  
1-q^\al \frac{u_{\al+1}u_{\al+2}\cdots u_{2\al}}{u_1u_2\cdots u_\al}\right\}
\end{equation} 
We conclude that both \eqref{finalone} and \eqref{finaltwo} have identical contributions to the constant term,
which amounts to \eqref{tobeproved}. This completes the proof of statement (i) of Theorem \ref{goback}.

\subsubsection{Proof of statement (ii)}

We first prove the statement (ii) \eqref{tobeproved2} for $\epsilon=0$. 
In this case, we may assume $0<\al<\beta$ without loss of generality.
Let us compute in two different ways the contribution of the l.c.m. of the two products
$\Delta_q[1,\al]\Delta_q[\al+1,\al+\beta]$ and $\Delta_q[1,\beta]\Delta_q[\beta+1,\al+\beta]$, to the constant term.
We define
$$K(z):=CT_{u_1,...,u_{\al+\beta}}\left(\frac{\Delta_q[1,\al+\beta]}{\prod_{i\in [1,\al]\atop  j\in [\beta+1,\beta+\al]}
\left(1-q\frac{u_j}{u_i}\right)} \prod_{i=1}^{\al+\beta}{\mathfrak m}(u_i)
\delta(u_1\cdots u_{\al+\beta}/z) \right)$$
The l.c.m. can be written in the two following ways:
\begin{eqnarray}
\frac{\Delta_q[1,\al+\beta]}{\prod_{i\in [1,\al]\atop  j\in [\beta+1,\beta+\al]}
\left(1-q\frac{u_j}{u_i}\right)}&=& \Delta_q[1,\al]\Delta_q[\al+1,\al+\beta] 
\prod_{i\in [1,\al]\atop  j\in [\al+1,\beta]}\left(1-q\frac{u_j}{u_i}\right) \label{lcmone}\\
&=&\Delta_q[1,\beta]\Delta_q[\beta+1,\al+\beta]  
\prod_{i\in [\al+1,\beta]\atop  j\in [\beta+1,\beta+\al]}\left(1-q\frac{u_j}{u_i}\right)\label{lcmtwo}
\end{eqnarray}

Let us expand both products on the r.h.s. as sums of Laurent monomials, respectively:
\begin{eqnarray}
\prod_{i\in [1,\al]\atop  j\in [\al+1,\beta]}\left(1-q\frac{u_j}{u_i}\right)&=&
\sum_{k=0}^{\al(\beta-\al)}
\sum_{1\leq i_1\leq\cdots i_k\leq \al \atop \al+1\leq j_1\leq\cdots \leq j_k\leq \beta} (-q)^k \frac{u_{j_1}\cdots u_{j_k}}{u_{i_1}\cdots u_{i_k}}\label{smone}\\
\prod_{i\in [\al+1,\beta]\atop  j\in [\beta+1,\beta+\al]}\left(1-q\frac{u_j}{u_i}\right)&=&
\sum_{k=0}^{\al(\beta-\al)}
\sum_{\al+1\leq i_1\leq\cdots i_k\leq \beta \atop \beta+1\leq j_1\leq\cdots \leq j_k\leq \beta+\al} 
(-q)^k \frac{u_{j_1}\cdots u_{j_k}}{u_{i_1}\cdots u_{i_k}}\label{smtwo}
\end{eqnarray}
Let us pick an arbitrary Laurent monomial in the first sum \eqref{smone}, with $k>0$.
Let $j:=j_k$ be the largest index appearing in the numerator, then the monomial may be written
as $(u_{j})^a \mu$ where the monomial $\mu$ depends only on variables with indices $i_\ell,j_\ell<j$,
while $0<a\leq\al$, $j\leq \beta$. Applying Corollary \ref{coruseful} for $m=a$, and noting that
$j+a\leq \al+\beta$, we find that only the term $k=0$
of the sum, with value $1$, contributes to the constant term.


Similarly, pick an arbitrary Laurent monomial in the second sum \eqref{smtwo}, with $k>0$.
Let $i:=i_1$ be the smallest index appearing in the denominator, then the monomial may be written as 
$u_{i}^{-a}\lambda$, where the monomial $\lambda$ depends only on variables with indices $i_\ell,j_\ell>i$,
while $0<a\leq \al$, $i\geq \al+1$. Applying Corollary \ref{coruseful} for $m=-a$, and noting that
$i-a\geq 1$, we find that only the term $k=0$
of the sum, with value $1$, contributes to the constant term.


We conclude that
\begin{eqnarray*}&&CT_{u_1,...,u_{\al+\beta}}\Big(\left\{ \Delta_q[1,\al]\Delta_q[\al+1,\al+\beta] 
-\Delta_q[1,\beta]\Delta_q[\beta+1,\al+\beta]  \right\} \\
&&\qquad\qquad\qquad\qquad 
\times \prod_{i=1}^{\al+\beta}{\mathfrak m}(u_i)
\delta(u_1\cdots u_{\al+\beta}/z) \Big)=K(z)-K(z)=0 
\end{eqnarray*}
which amounts to the statement (ii) of Theorem \ref{goback} for $\epsilon=0$.

We finally turn to the proof of the statement (ii) \eqref{tobeproved2} for $\epsilon=1$. 
We must distinguish three cases: $\al=\beta$, $\al<\beta$ and $\al>\beta$.

Imitating the proof for (i), let us compute
$$L(z):=CT_{u_1,...,u_{2\al}}\left\{
\frac{\Delta_q[1,\al+1]\Delta_q[\al+1,2\al]}{u_{\al+1}u_{\al+2}\cdots u_{2\al}}
\prod_{i=1}^{2\al}{\mathfrak m}(u_i)
\delta(u_1\cdots u_{2\al}/z) \right\}
$$
in two different ways. First, we write:
\begin{eqnarray*}\frac{\Delta_q[1,\al+1]\Delta_q[\al+1,2\al]}{u_{\al+1}u_{\al+2}\cdots u_{2\al}}&=&
\frac{\Delta_q[1,\al]\Delta_q[\al+1,2\al]}{u_{\al+1}u_{\al+2}\cdots u_{2\al}}\prod_{i=1}^\al\left(1-q\frac{u_{\al+1}}{u_i}\right)\\
&=&\frac{\Delta_q[1,\al]\Delta_q[\al+1,2\al]}{u_{\al+1}u_{\al+2}\cdots u_{2\al}}\sum_{k=0}^\al
\sum_{1\leq i_1<\cdots <i_k\leq\al} \frac{(-qu_{\al+1})^k}{u_{i_1}\cdots u_{i_k}}
\end{eqnarray*}
By Corollary \ref{coruseful} for $m=k$ we find that only the terms $k=0$ and $k=\al$ contribute:
$$\frac{\Delta_q[1,\al]\Delta_q[\al+1,2\al]}{u_{\al+1}u_{\al+2}\cdots u_{2\al}}
\left( 1+\frac{(-qu_{\al+1})^\al}{u_{1}u_2\cdots u_{\al}}\right)$$
The second term is rewritten using \eqref{posbound} for $i=1$, together with a renaming of variables $u_i\to u_{i+\al}$ ,
to finally get:
$$\frac{\Delta_q[1,\al]\Delta_q[\al+1,2\al]}{u_{\al+1}u_{\al+2}\cdots u_{2\al}}\left( 1-q^\al\frac{u_{\al+1}u_{\al+2}\cdots u_{2\al}}{u_{1}u_2\cdots u_{\al}}\right)$$
Next we write:
\begin{eqnarray*}\frac{\Delta_q[1,\al+1]\Delta_q[\al+1,2\al]}{u_{\al+1}u_{\al+2}\cdots u_{2\al}}&=&
\frac{\Delta_q[1,\al+1]\Delta_q[\al+2,2\al]}{u_{\al+1}u_{\al+2}\cdots u_{2\al}}
\prod_{j=\al+2}^{2\al}\left(1-q\frac{u_j}{u_{\al+2}}\right)\\
&=&\frac{\Delta_q[1,\al+1]\Delta_q[\al+2,2\al]}{u_{\al+1}u_{\al+2}\cdots u_{2\al}}\sum_{k=0}^{\al-1}(-q)^k
\sum_{\al+1\leq j_1<\cdots <j_k\leq 2\al} \frac{u_{j_1}\cdots u_{j_k}}{(u_{\al+1})^k}
\end{eqnarray*}
By Corollary \ref{coruseful} with $\al\to \al+1$,
$m=-k-1$, $i=\al+1$, $m+i=\al-k\geq 1$, we see that all the terms in the sum have a vanishing contribution, 
hence $L(z)=0$.
We conclude that
$$CT_{u_1,...,u_{2\al}}\left\{ \Delta_q[1,\al]\Delta_q[\al+1,2\al]
\left( \frac{1}{u_{\al+1}u_{\al+2}\cdots u_{2\al}}-\frac{q^\al}{u_{1}u_2\cdots u_{\al}}\right)
\prod_{i=1}^{2\al}{\mathfrak m}(u_i)
\delta(u_1\cdots u_{2\al}/z) \right\}=0$$
which is nothing but \eqref{tobeproved2} for $\al=\beta$ and $\epsilon=1$.

We now turn to the case $\al<\beta$, and $\epsilon=1$.
Imitating the proof for $\epsilon=0$, let us introduce 
$$M(z):=CT_{u_1,...,u_{\al+\beta}}\Big(\frac{\Delta_q[1,\al+\beta]}{\prod_{i\in [1,\al]\atop  j\in [\beta+1,\beta+\al]}
\left(1-q\frac{u_j}{u_i}\right)}\frac{\prod_{i=1}^{\al+\beta}{\mathfrak m}(u_i)
\delta(u_1\cdots u_{\al+\beta}/z)}{u_{\al+1}u_{\al+2}\cdots u_{\al+\beta}} \Big)
$$
We now write $M(z)$ in two ways, using (\ref{lcmone}-\ref{lcmtwo}), and the associated expansions (\ref{smone}-\ref{smtwo}),
now with an extra prefactor of $1/(u_{\al+1}u_{\al+2}\cdots u_{\al+\beta})$. The prefactor does not affect the argument for the first sum \eqref{smone}, whose only contribution is still $k=0$, hence: 
\begin{equation}\label{oneM}
M(z)=CT_{u_1,...,u_{\al+\beta}}\left( \frac{\Delta_q[1,\al]\Delta_q[\al+1,\al+\beta]}{u_{\al+1}u_{\al+2}\cdots u_{\al+\beta}}
\prod_{i=1}^{\al+\beta}{\mathfrak m}(u_i)
\delta(u_1\cdots u_{\al+\beta}/z)\right)
\end{equation}
To each term of the second sum \eqref{smtwo}, we apply Corollary \ref{coruseful} for $\al\to\beta$, 
now with $m=-a-1$, $0\leq a\leq \al$ and $i=\al+1$, as the smallest index in the denominator is always $\al+1$.
As $m+i=\al-a\geq 0$, we are left with only the contributions with $a=\al$, of terms of the form 
$(-q)^k\frac{u_{\beta+1}u_{\beta+2}\cdots u_{\beta+\al}}{(u_{\al+1})^{\al}}\frac{\lambda'}{u_{\al+1}u_{\al+2}\cdots u_{\al+\beta}}$, $\lambda'$ a monomial involving variables $u_\ell$
with $\beta\geq \ell>\al+1$. By \eqref{negbound}, with $i=\al$ and $\al\to\al+1$, this contributes the same as:
$$(-1)^\al(-q)^k\frac{u_{\beta+1}u_{\beta+2}\cdots u_{\beta+\al}}{u_1u_2\cdots u_{\al}}\frac{\lambda'}{u_{\al+1}u_{\al+2}\cdots u_{\al+\beta}}=(-1)^{\al}(-q)^k\frac{\lambda'}{u_1u_2\cdots u_\beta}$$
If $\lambda'$ had a non-trivial denominator, i.e. $\lambda'=u_{i'}^{-a'}\lambda''$, 
$i'\geq \al+2$ the smallest label appearing,
we would get a zero contribution from Corollary
\ref{coruseful} with $\al\to \beta$, $m=-a'\geq -\al$, $i'\geq \al+2$, hence $i'+m\geq 2$, and $g=1/(u_1u_2\cdots u_{\beta})$. 
We are left with only $\lambda'=1$, which implies
$k=\al$ and:
\begin{equation}\label{twoM}
M(z)=q^\al CT_{u_1,...,u_{\al+\beta}}\left(\frac{\Delta_q[1,\beta]\Delta_q[\beta+1,\al+\beta]}{u_{1}u_{2}\cdots u_{\beta}}
\prod_{i=1}^{\al+\beta}{\mathfrak m}(u_i)
\delta(u_1\cdots u_{\al+\beta}/z)\right)
\end{equation}
Identifying the expressions \eqref{oneM} and \eqref{twoM} yields \eqref{tobeproved2} for $\al<\beta$ and $\epsilon=1$.

The case $\al>\beta$ is dealt with by interchanging the roles of $\al$ and $\beta$ in the previous result, which amounts to
\eqref{tobeproved2} for $\al>\beta$ and $\epsilon=-1$, which implies the identity for $\epsilon=1$ upon dividing by 
$z=u_1u_2\cdots u_{\al+\beta}$ within the constant term.

The theorem follows.

%
%

\subsection{Conserved quantities as quantum determinants: proof of Theorem \ref{defectqdet}}
\label{secdefect}
This section is devoted to the proof of Theorem \ref{defectqdet}. It turns out to be a consequence of the 
more general formula:


\begin{thm}\label{defecthm}
For all $m=0,1,...,r+1$, we have:
\begin{equation}\label{qdetdefect}
C_m\, {\mathfrak m}_{r+1}(u)=CT_{u_1,...,u_{r+1}}\left(
\frac{\Delta_q(u_1,...,u_{r+1})}{u_1u_{2}\cdots u_{m}} 
\prod_{a=1}^{r+1}{\mathfrak m}(u_a)\delta(u_1\cdots u_{r+1}/u)\right)
\end{equation}
\end{thm}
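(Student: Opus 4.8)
The plan is first to deduce Theorem~\ref{defectqdet} from Theorem~\ref{defecthm}, and then to attack the latter directly.

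For the reduction I would extract the constant term in $u$ on both sides of \eqref{qdetdefect}. On the right, $CT_u$ simply removes the factor $\delta(u_1\cdots u_{r+1}/u)$, leaving $CT_{u_1,\dots,u_{r+1}}\big(\tfrac{\Delta_q(u_1,\dots,u_{r+1})}{u_1\cdots u_m}\prod_a{\mathfrak m}(u_a)\big)$. On the left, the boundary identification \eqref{bcm}, namely $M_{r+1,n}=A^n\Delta$, gives ${\mathfrak m}_{r+1}(u)=\sum_n A^n\Delta\,u^n$, so $CT_u\big(C_m\,{\mathfrak m}_{r+1}(u)\big)=C_m\,M_{r+1,0}=C_m\,\Delta$. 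Solving for $C_m$ reproduces exactly \eqref{Cmexprqdet}, so everything reduces to Theorem~\ref{defecthm}.

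To prove Theorem~\ref{defecthm} I would package the $r+2$ statements ($m=0,\dots,r+1$) into a single generating identity in a formal parameter $t$: multiplying \eqref{qdetdefect} by $(-t)^m$ and summing, and recalling $C(t)=\sum_m(-t)^mC_m$ from \eqref{genercons} together with ${\mathfrak m}_{r+1}(u)=\langle 1\rangle_{r+1}(u)$ from Theorem~\ref{vandermal}, the claim becomes
\[
C(t)\,{\mathfrak m}_{r+1}(u)=\Big\langle \sum_{m=0}^{r+1}\frac{(-t)^m}{u_1u_2\cdots u_m}\Big\rangle_{r+1}(u).
\]
The two extreme cases are immediate and fix the strategy: $m=0$ is Theorem~\ref{vandermal} with $C_0=1$, while for $m=r+1$ one uses $\tfrac{1}{u_1\cdots u_{r+1}}\,\delta(u_1\cdots u_{r+1}/u)=\tfrac1u\,\delta(u_1\cdots u_{r+1}/u)$, giving $A\,{\mathfrak m}_{r+1}(u)=C_{r+1}{\mathfrak m}_{r+1}(u)$. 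For intermediate $m$ I would exploit the same support identity in the form $\tfrac{1}{u_1\cdots u_m}=\tfrac{u_{m+1}\cdots u_{r+1}}{u}$ on the $\delta$-support, converting the negative-power weight into an honest Laurent-polynomial weight to which Lemma~\ref{skelem} and the boundary/vanishing identities \eqref{negbound}, \eqref{posbound}, \eqref{range} of Corollary~\ref{coruseful} apply. The heart is then to show, by the cancellation mechanism of the proof of Theorem~\ref{goback}, that all interior monomials are annihilated and the conserved quantities are reproduced out of the surviving boundary terms; I would run this as an induction on $m$, the inductive step relating the $m$-th constant term to the $(m-1)$-st via a commutator with $C_1$, matched against the time-translation action established in Theorem~\ref{timeTranslation} and the current commutation \eqref{Cmcom}.

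I expect the main obstacle to be precisely the emergence of $C_m$ as a \emph{left} factor. Commuting a conserved quantity through $\prod_a{\mathfrak m}(u_a)$ inside the determinant via \eqref{Cmcom} reintroduces a trailing $C(t)$ (consistent with the clean relation $C_m{\mathfrak m}_{r+1}(u)=q^{-m}{\mathfrak m}_{r+1}(u)C_m$, which follows from $AC_m=C_mA$ and $\Delta C_m=q^mC_m\Delta$), so the statement is \emph{not} a pure constant-term identity: the $C_m$ must be shown to arise intrinsically from the large-variable behaviour of the currents, i.e.\ from the limiting variables $x_\al$ of \eqref{expanmiu}--\eqref{elementary}. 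Isolating exactly the elementary-symmetric combination $e_m(x_1,\dots,x_{r+1})$ out of these shuffle-type cancellations, uniformly across the range $0<m<r+1$, is the delicate point; I would expect the linear recursion \eqref{consarcur}, $C(z){\mathfrak m}(z)=0$, to pin down the collected boundary terms and close the argument.
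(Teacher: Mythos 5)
Your reduction of Theorem \ref{defectqdet} to Theorem \ref{defecthm} by extracting $CT_u$ and using $M_{r+1,0}=\Delta$ is exactly what the paper does, and your two extreme cases $m=0$ (Theorem \ref{vandermal}) and $m=r+1$ (the support identity $\tfrac{1}{u_1\cdots u_{r+1}}\delta(u_1\cdots u_{r+1}/u)=\tfrac1u\delta(u_1\cdots u_{r+1}/u)$) check out. But the entire range $0<m<r+1$ is left as a sketch, and the sketch has a concrete obstruction: from \eqref{Cmcom} one gets $[C_1,{\mathfrak m}(w)]=(1-q)w^{-1}{\mathfrak m}(w)$, so commuting $C_1$ through the quantum determinant inserts the \emph{symmetric} factor $\sum_a u_a^{-1}$ into the bracket, whereas you need to produce the \emph{non-symmetric} weight $1/(u_1\cdots u_{m+1})$ from $1/(u_1\cdots u_m)$. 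Corollary \ref{coruseful} only lets you move single powers $u_i^{\pm}$ around in the presence of a symmetric companion $g$, and $1/(u_1\cdots u_m)$ is not symmetric in all $r+1$ variables, so the proposed inductive step on $m$ does not close as stated. You correctly diagnose that the hard part is making $C_m$ appear as a left factor, and you correctly guess that the linear recursion should pin things down, but you never say where a recursion \emph{among the candidate constant terms} would come from.

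That missing ingredient is the rank restriction, which your proposal never invokes. The paper's proof starts from ${\mathfrak m}_{r+2}(z)=0$, writes it as the $(r+2)$-variable quantum determinant, factors $\Delta_q[1,r+2]=\Delta_q[1,r+1]\prod_{a=1}^{r+1}(1-qu_{r+2}/u_a)$, and uses Lemma \ref{skelem} to kill all interior monomials of the expanded product, leaving only the boundary terms $\tfrac{(-qu_{r+2})^m}{u_1\cdots u_m}$. Extracting the $u_{r+2}$ dependence turns this single vanishing statement into the linear recursion $\sum_{m=0}^{r+1}(-1)^m c_{m,n}M_{n-m}=0$ satisfied by the coefficients $c_{m,n}$ of the right-hand sides of \eqref{qdetdefect} (up to a factor $q^m$). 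Comparing with \eqref{consar} forces $c_{m,n}=R_n\,C_m$ with $R_n$ fixed by the $m=0$ case to be $M_{r+1,n}$, and the commutation $M_{r+1,n}\,C_m=q^mC_m\,M_{r+1,n}$ (which you noted) absorbs the $q^m$ and puts $C_m$ on the left. So all $r+2$ identities are obtained at once from ${\mathfrak m}_{r+2}=0$ plus uniqueness of solutions to the recursion; no induction on $m$ and no commutators with $C_1$ are needed. As it stands your proposal is a plausible plan with the central step unproven, not a proof.
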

\begin{proof}
We start from the fact that ${\mathfrak m}_{r+2}(z)=0$ (from \eqref{Ur}), and write:
\begin{eqnarray*}
0&=&CT_{u_1,...,u_{r+2}}\left(\Delta_q[1,r+2]
\prod_{a=1}^{r+2}{\mathfrak m}(u_a)\delta(u_1\cdots u_{r+2}/z)\right)\\
&=&CT_{u_1,...,u_{r+2}}\left(\Delta_q[1,{r+1}]\prod_{a=1}^{r+1}\left( 1-q\frac{u_{r+2}}{u_a}\right)
\prod_{a=1}^{r+2}{\mathfrak m}(u_a)\delta(u_1\cdots u_{r+2}/z)\right)\\
&=&CT_{u_1,...,u_{r+2}}\left(\Delta_q[1,{r+1}]
\sum_{m=0}^{r+1} \frac{(-qu_{r+2})^m}{u_{1}u_2\cdots u_{m}}\prod_{a=m+2}^{r+1}\left( 1-q\frac{u_{r+2}}{u_a}\right)
\prod_{a=1}^{r+2}{\mathfrak m}(u_a)\delta(u_1\cdots u_{r+2}/z)\right)\\
\end{eqnarray*}
where in the last term, the products for $a=r+2,r+3$ are taken to be $1$.
Let us expand the $m$-th contribution for each $m\geq 0$ to the sum as:
$$\frac{(-qu_{r+2})^m}{u_{1}u_2\cdots u_{m}}\prod_{a=m+2}^{r+1}\left( 1-q\frac{u_{r+2}}{u_a}\right)=\frac{(-qu_{r+2})^m}{u_{1}u_2\cdots u_{m}}\left\{ 1-q \sum_{\ell=m+2}^{r+1}\frac{u_{r+2}}{u_{\ell}} \prod_{k=\ell+1}^{r+1}\left( 1-q\frac{u_{r+2}}{u_k}\right)\right\}$$
and apply Lemma \ref{skelem} to each contribution with $\ell\geq m+2$, with 
$(u,v)=(u_{\ell-1},u_\ell)$. As before the $\ell$ term is proportional to $1/u_\ell$ and independent of $u_{\ell-1}$,
hence when multiplied by $\Delta_q[1,r+1]{\mathfrak m}(u_1)\cdots {\mathfrak m}(u_{r+2})$
it is skew-symmetric in $(u_{\ell-1},u_\ell)$, and therefore the constant term vanishes.
The only remaining contribution is $1$, and we conclude that
\begin{equation}\label{lutalfin}
0= CT_{u_1,...,u_{r+2}}\left(\Delta_q[1,{r+1}] \sum_{m=0}^{r+1} \frac{(-qu_{r+2})^m}{u_{1}u_2\cdots u_{m}} \prod_{a=1}^{r+2}{\mathfrak m}(u_a)\delta(u_1\cdots u_{r+2}/z)\right)
\end{equation}
Define
$$c_m(u)=\sum_{n\in \Z}u^n c_{m,n}:=CT_{u_1,...,u_{r+1}}\left(\Delta_q[1,{r+1}] \frac{q^m}{u_{1}u_2\cdots u_{m}} \prod_{a=1}^{r+1}{\mathfrak m}(u_a)\delta(u_1\cdots u_{r+1}/u)\right)$$
then \eqref{lutalfin} is equivalent to the recursion relation:
$$CT_{u_{r+2}}\left(\sum_{m=0}^{r+1}c_m(u)(-u_{r+2})^m{\mathfrak m}(u_{r+2})\delta(uu_{r+2}/z)\right)=0\ \Rightarrow\ 
\sum_{m=0}^{r+1} (-1)^m c_{m,n} M_{n-m}=0 \qquad (n\in \Z)$$
Comparing to \eqref{consar}, we find that $c_{m,n}$ must be proportional to $C_m$,
up to an overall factor depending on $n$ only, namely $c_{m,n}=R_n\, C_m$. $R_n$
is fixed by the value for $m=0$, where $C_0=1$ and $c_0(u)={\mathfrak m}_{r+1}(u)$
by definition. We conclude that $R_{n}=M_{r+1,n}$ and 
$c_m(u)={\mathfrak m}_{r+1}(u)\, C_m$. Finally, by Lemma \ref{commutaC}, 
$C_m$ commutes with $C_{r+1}=A$, and $\Delta\, C_m=q^{m}C_m\Delta$ (by \eqref{comCdelt}),
so that we have $M_{r+1,n}\, C_m=q^{m}C_m\, M_{r+1,n}$,
hence $c_m(u)=q^{m}  C_m\, {\mathfrak m}_{r+1}(u)$, and the theorem follows.
\end{proof}

\begin{example}
In the case $A_1$, $r=1$ of Example \ref{exAone}, picking the coefficient of $u^n$ in \eqref{qdetdefect}, we get:
$$C_1=(M_{n+1}M_n-q M_{n+2}M_{n-1})\, \Delta^{-1}\, A^{-n}$$
Comparing with the constant term (coefficient of $u^0$), we get the conservation law:
$$ M_{n+1}M_n-q M_{n+2}M_{n-1}=q^{-2n}(M_{1}M_0-q M_{2}M_{-1})A^n $$
\end{example}

The explicit formula of Theorem \ref{defectqdet} for $C_m$ is obtained by taking the constant term in $u$
in \eqref{qdetdefect},
which selects the coefficient $C_m\,M_{r+1,0}=C_m\,\Delta$.

Summing the result of Theorem \ref{qdetdefect}
and reinstating the vanishing terms, we may also express the generating polynomial
for the conserved quantities. Noting further that $\Delta\, C(z)=C(q z)\,\Delta$ while $A$ commutes with $C(z)$,
so that $M_{r+1,n}\, C(z)=C(qz)\, M_{r+1,n}$ for all $n\in \Z$, we arrive at the following compact formula.

\begin{cor}\label{currcons}
We have:
\begin{eqnarray*}{\mathfrak m}_{r+1}(u)\, C(z)&=&C(qz)\, {\mathfrak m}_{r+1}(u)\\
&=&CT_{u_1,...,u_{r+1}}\left(\Delta_q(u_1,...,u_{r+1},z)
\prod_{a=1}^{r+1}{\mathfrak m}(u_a)\, \delta(u_1\cdots u_{r+1}/u)\right)
\end{eqnarray*}
\end{cor}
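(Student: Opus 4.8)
The plan is to deduce the Corollary by repackaging Theorem~\ref{defecthm}: the first equality is a purely algebraic commutation statement, and the second is obtained by summing the identities \eqref{qdetdefect} over $m$ against the coefficients of $C(z)$ and then ``reinstating'' the terms that vanish in the constant term.

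First I would establish the commutation $\m_{r+1}(u)\,C(z)=C(qz)\,\m_{r+1}(u)$. The proof of Theorem~\ref{defecthm} already records that $M_{r+1,n}\,C_m=q^m\,C_m\,M_{r+1,n}$, which follows from Lemma~\ref{commutaC} (so that $C_m$ commutes with $A=C_{r+1}$) together with $\Delta\,C_m=q^m C_m\,\Delta$ from \eqref{comCdelt} and $M_{r+1,n}=A^n\Delta$ from \eqref{bcm}. Summing over $n$ gives $\m_{r+1}(u)\,C_m=q^m C_m\,\m_{r+1}(u)$, and then multiplying by $(-z)^m$ and summing over $m=0,\dots,r+1$, using $C(z)=\sum_m(-z)^m C_m$ from \eqref{genercons}, yields $\m_{r+1}(u)\,C(z)=\sum_m(-qz)^m C_m\,\m_{r+1}(u)=C(qz)\,\m_{r+1}(u)$. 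This is the first equality.

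For the second equality I would start from the claimed right-hand side and factor the extended $q$-Vandermonde determinant, using that treating $z$ as the last variable gives
$$\Delta_q(u_1,\dots,u_{r+1},z)=\Delta_q(u_1,\dots,u_{r+1})\prod_{a=1}^{r+1}\Big(1-q\frac{z}{u_a}\Big).$$
I then reduce the product $\prod_{a=1}^{r+1}(1-qz/u_a)$ inside the constant term by exactly the telescoping-plus-skew-symmetry computation already carried out in the proof of Theorem~\ref{defecthm}, now with the inert parameter $z$ playing the role formerly played by the integration variable $u_{r+2}$. Writing
$$\prod_{a=1}^{r+1}\Big(1-q\frac{z}{u_a}\Big)=\sum_{m=0}^{r+1}\frac{(-qz)^m}{u_1\cdots u_m}\prod_{a=m+2}^{r+1}\Big(1-q\frac{z}{u_a}\Big),$$
each correction factor $\prod_{a=m+2}^{r+1}(1-qz/u_a)$ expands into terms each proportional to some $1/u_\ell$ and independent of $u_{\ell-1}$; multiplied by $\Delta_q(u_1,\dots,u_{r+1})\prod_a\m(u_a)$ these are skew-symmetric in $(u_{\ell-1},u_\ell)$, so Lemma~\ref{skelem} kills them and only the leading monomials $\frac{(-qz)^m}{u_1\cdots u_m}$ survive. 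Applying \eqref{qdetdefect} to each surviving term identifies it with $(-qz)^m\,C_m\,\m_{r+1}(u)$, and resumming produces $C(qz)\,\m_{r+1}(u)$, matching the first equality.

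The computation is essentially routine once the first equality is in place; the only point requiring care is the transfer of the skew-symmetrization argument from Theorem~\ref{defecthm}. There $u_{r+2}$ was a genuine integration variable, whereas here $z$ is a free spectral parameter, so I must check that every application of Lemma~\ref{skelem} acts only on adjacent pairs $(u_{\ell-1},u_\ell)$ among the integrated variables $u_1,\dots,u_{r+1}$ and treats $z$ as inert. This is indeed the case, so the argument transfers verbatim. A secondary bookkeeping point is to keep the shift $C(z)\mapsto C(qz)$ consistent between the two equalities, which is guaranteed by the matching factor $q^m$ appearing in both $\m_{r+1}(u)\,C_m=q^m C_m\,\m_{r+1}(u)$ and the expansion of $\prod_a(1-qz/u_a)$.
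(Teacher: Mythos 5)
Your proposal is correct and follows essentially the same route as the paper: the first equality comes from $M_{r+1,n}\,C_m=q^mC_m\,M_{r+1,n}$ (via \eqref{bcm}, \eqref{comCdelt} and Lemma \ref{commutaC}), and the second comes from summing Theorem \ref{defecthm} over $m$ and ``reinstating'' the terms killed by Lemma \ref{skelem}, which is precisely your telescoping reduction of $\prod_a(1-qz/u_a)$ read in the opposite direction. Your explicit check that $z$ is inert in the skew-symmetrization of adjacent pairs $(u_{\ell-1},u_\ell)$ is the right point to verify, and it holds.
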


\section{The quantum affine algebra}

In this section we define other currents in $\U_r$, which satisfy relations similar to that of the quantum affine algebra of $\sl_2$ with zero central extension and a non-standard Cartan current valuation, depending on the integer $r$.

\subsection{Definitions of the generating functions $\mathfrak f$ and $\psi^{\pm}$}

So far, we have shown that $\U_r$ is isomorphic to a quotient
of quantum enveloping algebra ${\mathcal U}_{\sqrt{q}}({\mathfrak n}_+[u,u^{-1}])$. Define
the renormalized generating function
\begin{equation}\label{edef}{\mathfrak e}(z):={\mathfrak m}(q^{1/2}z)=\sum_{n\in \Z}z^n q^{n/2} M_n.
\end{equation} 
The generating function ${\mathfrak f}(z)$ is defined using two involutions of $\U_r$.  Recall the time-reversal anti-automorphism $\tau$
from Definition \ref{deftau}. By definition, $\tau({\mathfrak e}(z))={\mathfrak e}(z^{-1})$. 

There is another involutive automorphism of $\U_r$:
\begin{defn}\label{sigmadef}
The Weyl reflection automorphism $\sigma$ is defined on $\U_r$ via:
\begin{eqnarray*}
\sigma(M_{\al,n})&= &A^{-n} \, M_{r+1-\al,n}\, \Delta^{-1},\quad 
\sigma(A)=A^{-1},\quad 
\sigma(\Delta)=\Delta^{-1},\quad 
\sigma(q)=q .
\end{eqnarray*}
\end{defn}
It is straightforward to check that $\sigma$ preserves the relations \eqref{Msys}, \eqref{Mcom} and \eqref{Ur} and that it is an involution.

We define the new generating function ${\mathfrak f}(z)$ as
\begin{equation}\label{fdef}{\mathfrak f}(z):=\sigma \tau ({\mathfrak e}(z))=\sum_{n\in \Z} z^{-n} q^{n/2} A^{-n}M_{r,n}\Delta^{-1}.
\end{equation}


Let $C(z)$ be the generating polynomial of conserved quantities \eqref{genercons}.
We define the two series, $\psi^\pm\in \U_r[[z^{\pm 1}]]$ (Cartan currents), as the expansions of the same rational fraction in either $z$ or $z^{-1}$:
\begin{eqnarray}
{\mathfrak \psi}^+(z)&=&(-q^{-1/2} z)^{r+1}AC(q^{1/2}z)^{-1}C(q^{-1/2}z)^{-1}\label{pplus}\\
&=&(-q^{-1/2} z)^{r+1}A(1+(q^{1/2}+q^{-1/2})C_1 z+ O(z^2))\nonumber\\
{\mathfrak \psi}^-(z)&=&(-q^{1/2}z)^{-r-1}A\left(\frac{C(q^{1/2}z)}{(q^{1/2}z)^{r+1}}\right)^{-1}
\left(\frac{C(q^{-1/2}z)}{(q^{-1/2}z)^{r+1}}\right)^{-1},
\label{pmoins}\\
&=&(-q^{1/2}z)^{-r-1} A^{-1}\left(\hat{C}(q^{-1/2}z^{-1})\hat{C}(q^{1/2} z^{-1})\right)^{-1}\nonumber \\
&=&(-q^{1/2}z)^{-r-1}A^{-1}(1+(q^{1/2}+q^{-1/2})A^{-1}C_r z^{-1}+ O(z^{-2})).\nonumber
\end{eqnarray}
Here, we have introduced the notation
\begin{equation}\label{hatC}
\hat{C}(z):=(-z)^{r+1}A^{-1}C(z^{-1})=\sum_{m=0}^{r+1}(-z)^j A^{-1}C_{r+1-j}=1-zA^{-1}C_r+O(z^2).
\end{equation}

\begin{lemma}\label{Cfixed}
The conserved quantities $C_m$ are invariant under $\sigma \tau$, for all $m=0,1,...,r+1$. Equivalently:
$$\sigma\tau(C(z))=C(z) . $$
\end{lemma}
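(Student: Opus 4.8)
The plan is to reduce the statement to tracking how $\sigma\tau$ acts on the monomials $\xi_{\al,n}=M_{\al,n}M_{\al,n+1}^{-1}$ and $x_{\al,n}=\xi_{\al,n}^{-1}\xi_{\al-1,n}$ out of which the conserved quantities are built, and then to exploit the factorization of the Miura operator $\mu=\prod_{\al=r+1}^{1}(D-x_{\al,n})$ in the commuting limits $n\to\pm\infty$. Throughout I will use the key structural fact that, by \eqref{AMcom}, $A$ commutes with every $\xi_{\beta,n}$ (hence with every $x_{\beta,n}$): indeed $A\xi_{\beta,n}A^{-1}=q^{-\beta}M_{\beta,n}\,(A M_{\beta,n+1}^{-1}A^{-1})=q^{-\beta}M_{\beta,n}q^{\beta}M_{\beta,n+1}^{-1}=\xi_{\beta,n}$.

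First I would compute the action on the $\xi$'s separately. Since $\tau$ is an anti-automorphism with $\tau(M_{\al,n})=q^{-\al n}M_{\al,-n}$, one gets $\tau(\xi_{\al,n})=q^{\al}M_{\al,-n-1}^{-1}M_{\al,-n}$, and applying \eqref{Mcom} (with $\al=\beta$, $\epsilon=1$) in the form $M_{\al,-n-1}M_{\al,-n}=q^{\al}M_{\al,-n}M_{\al,-n-1}$ this collapses to $\tau(\xi_{\al,n})=\xi_{\al,-n-1}^{-1}$. From $\sigma(M_{\al,n})=A^{-n}M_{r+1-\al,n}\Delta^{-1}$ one similarly finds $\sigma(\xi_{\al,n})=A^{-n}\xi_{r+1-\al,n}A^{n+1}=A\,\xi_{r+1-\al,n}$, using the commutation of $A$ with $\xi$. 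Composing (and recalling that $\sigma\tau$ is an anti-automorphism) gives $\sigma\tau(\xi_{\al,n})=A^{-1}\xi_{r+1-\al,-n-1}^{-1}$, whence
\begin{equation*}
\sigma\tau(x_{\al,n})=\sigma\tau(\xi_{\al-1,n})\,\sigma\tau(\xi_{\al,n}^{-1})
=A^{-1}\,\xi_{r+2-\al,-n-1}^{-1}\,\xi_{r+1-\al,-n-1}\,A
=x_{r+2-\al,\,-n-1},
\end{equation*}
the last step again using that $A$ commutes with the $\xi$'s. Thus $\sigma\tau$ reflects the level index $\al\mapsto r+2-\al$ and the time index $n\mapsto -n-1$, and in particular permutes the family $\{x_{\al,n}\}$.

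To finish I would pass to the commuting limits. By \cite{qKR} the limits $x_\al:=\lim_{n\to\infty}x_{\al,n}$ exist and commute; applying the (continuous) anti-automorphism $\sigma\tau$ to the identity above shows that $x_{r+2-\al}^-:=\lim_{n\to-\infty}x_{r+2-\al,n}$ also exist and satisfy $\sigma\tau(x_\al)=x_{r+2-\al}^-$. The $x_\al^-$ commute (images of commuting elements under an anti-automorphism) and are shift-invariant, $Dx_\al^-D^{-1}=\lim_{n\to-\infty}x_{\al,n+1}=x_\al^-$, so they also commute with $D$. Hence the $n$-independent Miura operator admits two factorizations $\mu=\prod_{\al=r+1}^{1}(D-x_\al)=\prod_{\al=r+1}^{1}(D-x_\al^-)$, giving $C(z)=\prod_{\al=1}^{r+1}(1-zx_\al)=\prod_{\al=1}^{r+1}(1-zx_\al^-)$, i.e. $C_m=e_m(x_1,\dots,x_{r+1})=e_m(x_1^-,\dots,x_{r+1}^-)$. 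Since both the $x_\al$ and their images $\sigma\tau(x_\al)=x_{r+2-\al}^-$ commute, applying $\sigma\tau$ to the genuinely commutative polynomial $C(z)$ yields $\sigma\tau(C(z))=\prod_{\al=1}^{r+1}(1-z\,x_{r+2-\al}^-)=\prod_{\beta=1}^{r+1}(1-z\,x_\beta^-)=C(z)$, which is the claim.

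The bookkeeping of $q$-powers and $A$-commutations in the first two steps is routine once one records that $A$ commutes with all $\xi_{\beta,n}$. I expect the main obstacle to be the limit argument: one must justify that $\sigma\tau$ extends continuously to the completion in which the limits $n\to\pm\infty$ live (so it commutes with them), and that the $n\to-\infty$ factorization of $\mu$ is valid, so that $e_m(x^+)=e_m(x^-)=C_m$. Everything else is a direct, if careful, computation.
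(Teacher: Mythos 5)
Your computation is correct, and your first step --- establishing $\sigma\tau(x_{\al,n})=x_{r+2-\al,-n-1}$ via the action on the $\xi$'s and the fact that $A$ commutes with them --- agrees with the paper, which records the equivalent separate facts $\sigma(x_{\al,n})=x_{r+2-\al,n}^{-1}$ and $\tau(x_{\al,n})=x_{\al,-n-1}^{-1}$. Where you genuinely diverge is the endgame. The paper stays at finite $n$: since $\sigma$ is an automorphism and $\tau$ an anti-automorphism, both $\sigma(\mu_n)$ and $\tau(\mu_{-n-1})$ equal $\prod_{\al=1}^{r+1}(D-x_{\al,n}^{-1})$, and because $\mu_n=\mu_{-n-1}=\mu$ this forces $\sigma(C_m)=\tau(C_m)$, whence $\sigma\tau(C_m)=\sigma^2(C_m)=C_m$ by involutivity of $\sigma$; no limits beyond those already used for \eqref{elementary} enter. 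You instead pass to the two limits $n\to\pm\infty$, use the $n$-independence of $\mu$ to get the double factorization $C_m=e_m(x_1,\dots,x_{r+1})=e_m(x_1^-,\dots,x_{r+1}^-)$, and then apply $\sigma\tau$ to a symmetric polynomial in commuting variables. This buys a very transparent final step, but at the cost of the extra inputs you yourself flag: the existence of the $n\to-\infty$ limits and the compatibility of $\sigma\tau$ with the limiting procedure, neither of which is supplied by the cited Lemma 5.14 of \cite{qKR} (which treats only $n\to+\infty$). Your patch --- transporting convergence through the identity $x_{r+2-\al,-n-1}=\sigma\tau(x_{\al,n})$ --- works provided $\sigma\tau$ is continuous for the relevant notion of limit, so the argument is sound, but the paper's finite-$n$ route sidesteps the issue entirely and is the more economical proof.
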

\begin{proof}
Computing explicitly the action of $\sigma$ and $\tau$ on the Miura operator
$\mu=\mu_n=\mu_{-n-1}$ of \eqref{miura}. 
We have
$\sigma(x_{\al,n})=x_{r+2-\al,n}^{-1}$ and
$\tau(x_{\al,n})=x_{\al,-n-1}^{-1}$. As $\sigma$ and $\tau$ both commute with the time shift operator $D$,
we have:
\begin{eqnarray*}
\sigma(\mu_n)&=&\prod_{\al=r+1}^{1}(D -x_{r+2-\al,n}^{-1})=\prod_{\al=1}^{r+1}(D -x_{\al,n}^{-1}),\\
\tau(\mu_{-n-1})&=& \prod_{\al=1}^{r+1} (D-x_{\al,n}^{-1}).
\end{eqnarray*}
This implies that $\sigma(C_m)=\tau(C_m)$ for all $m$. Moreover, $\sigma$ and $\tau$ are involutions by definition, hence $\sigma\tau(C_m)=C_m$ for all $m$.
\end{proof}

\begin{cor}
\begin{equation}\label{invsigtau}
\sigma\tau(\psi^{\pm}(z))=\psi^{\pm}(z)
\end{equation}
\end{cor}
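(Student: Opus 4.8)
The plan is to exploit that $\sigma\tau$ is an \emph{anti}-automorphism which fixes the conserved quantities, combined with the fact that the building blocks of $\psi^{\pm}$ all commute with one another.

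First I would record the basic formal properties of $\sigma\tau$. Since $\tau$ is an anti-automorphism (Definition \ref{deftau}) and $\sigma$ an automorphism (Definition \ref{sigmadef}), the composite $\sigma\tau$ is an anti-automorphism of $\U_r$, so that $\sigma\tau(XY)=\sigma\tau(Y)\,\sigma\tau(X)$ and, applied to $XX^{-1}=1$, $\sigma\tau(X^{-1})=(\sigma\tau(X))^{-1}$. On scalars it acts trivially, $\sigma\tau(q)=q$, and it fixes the formal variable $z$, so the prefactors $(-q^{-1/2}z)^{r+1}$ and $(-q^{1/2}z)^{-r-1}$ are invariant and central. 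From $\tau(A)=A^{-1}$ and $\sigma(A)=A^{-1}$ I get $\sigma\tau(A)=\sigma(A^{-1})=(\sigma(A))^{-1}=A$, and likewise $\sigma\tau(A^{-1})=A^{-1}$.

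Next I would import the two lemmas already available. Lemma \ref{Cfixed} gives $\sigma\tau(C(z))=C(z)$, i.e.\ $\sigma\tau(C_m)=C_m$ for every $m$; since $\sigma\tau$ fixes $q$ and $z$, applying it coefficientwise to $C(q^{\pm 1/2}z)=\sum_m(-q^{\pm1/2}z)^m C_m$ shows $\sigma\tau(C(q^{1/2}z))=C(q^{1/2}z)$ and $\sigma\tau(C(q^{-1/2}z))=C(q^{-1/2}z)$, and the same for the rescaled factors $\hat C(q^{\pm1/2}z^{-1})$ appearing in \eqref{pmoins}. Then I would invoke Lemma \ref{commutaC}: the $C_m$ pairwise commute, and since $A=C_{r+1}$, the elements $A$, $C(q^{1/2}z)$, $C(q^{-1/2}z)$ and all their inverses mutually commute.

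With these facts in hand I would apply $\sigma\tau$ directly to \eqref{pplus} and \eqref{pmoins}. Because $\sigma\tau$ reverses the order of the product $A\,C(q^{1/2}z)^{-1}C(q^{-1/2}z)^{-1}$ while fixing each individual factor, one obtains $\sigma\tau(\psi^+(z))=(-q^{-1/2}z)^{r+1}\,C(q^{-1/2}z)^{-1}C(q^{1/2}z)^{-1}A$, and the mutual commutativity of the three factors rearranges this back into \eqref{pplus}; the computation for $\psi^-(z)$ is identical. The one genuinely delicate point—indeed the only obstacle—is precisely this order reversal: invariance does \emph{not} follow merely from each factor being fixed, and it is the commutativity of $A$ with the $C_m$ (Lemma \ref{commutaC}) that makes the reversed product coincide with the original. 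Everything else is routine bookkeeping.
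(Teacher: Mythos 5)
Your proof is correct and follows the same route as the paper, which simply observes that the coefficients of $\psi^{\pm}$ are polynomials in the conserved quantities $C_m$ and invokes Lemma \ref{Cfixed}. The only difference is that you make explicit the point the paper leaves implicit — that $\sigma\tau$ is an \emph{anti}-automorphism, so one also needs the mutual commutativity of $A=C_{r+1}$ and the $C_m$ (Lemma \ref{commutaC}) to undo the order reversal — which is a worthwhile clarification but not a different argument.
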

\begin{proof}
The coefficients of both series $\psi^\pm$ are polynomials of the conserved quantities $C_m$, hence the Corollary follows from Lemma \ref{Cfixed}. 
\end{proof}

\subsection{Drinfeld-type relations}

We now come to the main theorem of this section, in the form of relations between the currents 
${\mathfrak e}(z),{\mathfrak f}(z)$ and the series ${\mathfrak \psi}^\pm(z)$.

\begin{thm}\label{pretoro}
The currents ${\mathfrak e}(z),{\mathfrak f}(z)$ and series ${\mathfrak \psi}^\pm(z)$ obey the following 
relations:
\begin{eqnarray}
(z-q w){\mathfrak e}(z)\, {\mathfrak e}(w)+(w-q z) {\mathfrak e}(w)\,  {\mathfrak e}(z)&=&0\label{mmcom}\\
(z-q^{-1} w){\mathfrak f}(z)\, {\mathfrak f}(w)+(w-q^{-1} z) {\mathfrak f}(w)\,  {\mathfrak f}(z)&=&0\label{nmcom}\\
(z-q w){\mathfrak \psi}^\pm(z)\, {\mathfrak e}(w)+(w-q z) {\mathfrak e}(w)\,  {\mathfrak \psi}^\pm(z)&=&0\label{pmcom}\\
(z-q^{-1} w){\mathfrak \psi}^\pm(z)\, {\mathfrak f}(w)+(w-q^{-1} z) {\mathfrak f}(w)\,  {\mathfrak \psi}^\pm(z)&=&0\label{pncom}\\
{[} {\mathfrak e}(z), {\mathfrak f}(w){]}=(1-q)\delta(z/w)\left({\mathfrak \psi}^+(z)-{\mathfrak \psi}^-(z)\right)\!\!\!\!\!\!\!\!\!\!\!\!&&
\label{comn}
\end{eqnarray}
\end{thm}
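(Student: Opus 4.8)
The plan is to prove the five relations in order, getting the first four cheaply from the exchange relation \eqref{current_quadratic}, the commutation \eqref{Cmcom}, and the symmetry $\sigma\tau$, and to reserve the real work for the cross relation \eqref{comn}. Relation \eqref{mmcom} is nothing but \eqref{current_quadratic} after the rescaling $z\mapsto q^{1/2}z$, $w\mapsto q^{1/2}w$: since $\mathfrak{e}(z)=\mathfrak{m}(q^{1/2}z)$, the two prefactors become $q^{1/2}(z-qw)$ and $q^{1/2}(w-qz)$, and dividing by $q^{1/2}$ yields \eqref{mmcom} verbatim. For \eqref{nmcom} I would apply the anti-automorphism $\sigma\tau$ (Definitions \ref{deftau} and \ref{sigmadef}) to \eqref{mmcom}. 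Since $\sigma\tau$ reverses products, fixes $q$ and the formal variables, and satisfies $\sigma\tau(\mathfrak{e}(z))=\mathfrak{f}(z)$ by the very definition \eqref{fdef}, it turns \eqref{mmcom} coefficientwise into $(z-qw)\mathfrak{f}(w)\mathfrak{f}(z)+(w-qz)\mathfrak{f}(z)\mathfrak{f}(w)=0$, and multiplying by $-q^{-1}$ reproduces \eqref{nmcom}.

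For the $\psi$-$\mathfrak{e}$ relation \eqref{pmcom} I would use only the definition \eqref{pplus}--\eqref{pmoins} of $\psi^\pm$ as a scalar times $A\,C(q^{1/2}z)^{-1}C(q^{-1/2}z)^{-1}$, the commutation \eqref{Cmcom} in the inverted form $C(a)^{-1}\mathfrak{m}(b)=\tfrac{qa-b}{a-b}\mathfrak{m}(b)C(a)^{-1}$, and $A\,\mathfrak{m}(b)=q^{-1}\mathfrak{m}(b)A$ from \eqref{AMcom}. Pushing $\mathfrak{e}(w)=\mathfrak{m}(q^{1/2}w)$ leftward through $C(q^{-1/2}z)^{-1}$, then $C(q^{1/2}z)^{-1}$, then $A$ contributes the scalars $\tfrac{q(z-w)}{z-qw}$, $\tfrac{qz-w}{z-w}$, and $q^{-1}$, whose product telescopes to $\tfrac{qz-w}{z-qw}$; thus $\psi^\pm(z)\mathfrak{e}(w)=\tfrac{qz-w}{z-qw}\mathfrak{e}(w)\psi^\pm(z)$, which cleared of denominators is exactly \eqref{pmcom}. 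The identical computation applies to both $\psi^+$ and $\psi^-$, since they are the two expansions of one rational function and \eqref{Cmcom} is an exact operator identity. Relation \eqref{pncom} then follows by applying $\sigma\tau$ to \eqref{pmcom}, using the invariance $\sigma\tau(\psi^\pm)=\psi^\pm$ from \eqref{invsigtau} and $\sigma\tau(\mathfrak{e})=\mathfrak{f}$, precisely as in the step from \eqref{mmcom} to \eqref{nmcom}.

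The genuine obstacle is the cross relation \eqref{comn}. The commutator $[\mathfrak{e}(z),\mathfrak{f}(w)]$ couples the bottom current $\mathfrak{m}=\mathfrak{m}_1$ to the twisted top current $\mathfrak{m}_r$ underlying $\mathfrak{f}$, so I would realize $\mathfrak{f}(w)$ through the quantum-determinant formula of Theorem \ref{vanderq} and reduce the whole commutator to a single constant-term identity, with $r$ integration variables from $\mathfrak{f}$ and the extra current $\mathfrak{m}(q^{1/2}z)$ from $\mathfrak{e}$, for a total of $r+1$ currents — exactly the count of the $\mathfrak{m}_{r+1}$-$C(z)$ kernel of Corollary \ref{currcons}. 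Pushing $\mathfrak{m}(q^{1/2}z)$ through the $q$-Vandermonde kernel by repeated use of the exchange relation and Lemma \ref{skelem} should kill all interior contributions and leave only the two boundary placements of this current. The crucial and hardest point is that these two surviving terms are the two expansions, in $z$ and in $z^{-1}$, of one and the same rational kernel, so their difference is supported on $z=w$ and produces $\delta(z/w)$; identifying the residue with $(1-q)(\psi^+(z)-\psi^-(z))$ amounts to recognizing the emergent $C(q^{\pm1/2}z)^{-1}$ combination of \eqref{pplus}--\eqref{pmoins} inside the constant term via Corollary \ref{currcons}. Controlling the $q$-powers together with the $A$ and $\Delta$ twists carried by $\mathfrak{f}$ is where essentially all the difficulty lies; the $\sigma\tau$-invariance established above, which sends \eqref{comn} at $(z,w)$ to itself at $(w,z)$, serves as a consistency check.
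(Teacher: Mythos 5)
Your treatment of the first four relations is correct and coincides with the paper's: \eqref{mmcom} is the rescaled exchange relation \eqref{current_quadratic}, \eqref{nmcom} and \eqref{pncom} follow by applying the anti-automorphism $\sigma\tau$ (using \eqref{fdef} and \eqref{invsigtau}), and \eqref{pmcom} follows by pushing $\mathfrak{e}(w)$ through the two $C$-factors and $A$ via \eqref{Cmcom} and \eqref{AMcom}, with the telescoping of scalars exactly as you describe. No issues there.

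The gap is in \eqref{comn}, which you have outlined but not proved. Your plan --- expand $\mathfrak{f}$ through the quantum determinant of Theorem \ref{vanderq}, push $\mathfrak{m}(q^{1/2}z)$ through the $q$-Vandermonde kernel, and hope the two surviving boundary terms are the two expansions of one rational function --- leaves precisely the hard content unresolved, and you say so yourself. Concretely, three things are missing. First, the localization of the commutator on the diagonal: the paper gets this for free by writing $[\mathfrak{e}(z),\mathfrak{f}(w)]=\sum_{n,p}z^{n+p}w^{-n}\rho_{n,p}$ and observing that $\rho_{n,p}=q^{p/2}A^{-n}[M_{n+p},M_{r,n}]_{q^{-p}}\Delta^{-1}=0$ for $|p|\leq r$, an immediate consequence of the extended commutation relations \eqref{comM} of Lemma \ref{farcommutation} with $\al=1$, $\beta=r$. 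In your constant-term scheme this vanishing would have to emerge from cancellations among $r+1$ integration variables interleaved with the $A^{-n}\Delta^{-1}$ twist, and nothing in Lemma \ref{skelem} or Corollary \ref{coruseful} as stated applies to a product of $\mathfrak{m}$'s multiplied on the left by an external $\mathfrak{m}(q^{1/2}z)$ carrying a different normalization. Second, the boundary evaluation: the paper computes $\rho_{n,\pm(r+1)}$ explicitly from the recursions \eqref{leftmal} and \eqref{rightmal} at $\al=r+1$, which produce $M_{r+1,n\pm1}\Delta^{-1}=A^{\pm1}$ up to scalars; your sketch does not identify where these two constants come from. Third, the determination of all remaining coefficients: the paper feeds the initial window into the left and right linear recursions \eqref{consar} and \eqref{rrec} inherited from the conserved quantities, solves the resulting two one-sided recursions as geometric series, and recognizes $C(q^{1/2}z)^{-1}C(q^{-1/2}z)^{-1}$, i.e.\ $\psi^{\pm}$; this step, not the constant-term algebra, is what produces the precise normalization $(1-q)(\psi^+-\psi^-)$ and the $n$-independence needed to factor out $\delta(z/w)$. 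Until you either carry out your constant-term program in full or switch to the componentwise argument, \eqref{comn} remains unproven.
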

\begin{proof}
Eq.\eqref{mmcom} is equivalent \eqref{current_quadratic}.
Eq.\eqref{nmcom} is obtained by applying $\sigma\tau$ to \eqref{MMcom}: as $\tau$ is an anti-automorphism, 
the order of the currents is switched. 

To prove \eqref{pmcom}, we use the formulation  \eqref{Cmcom} of 
Theorem \ref{timeTranslation}:
\begin{equation}\label{iter} C(z)\, {\mathfrak e}(w)\frac{q^{1/2} z-w}{q^{-1/2}z-w}= {\mathfrak e}(w)\, C(z) 
\end{equation}
with $C(z)$ as in \eqref{genercons}.
using the commutation $A^{-1}M_n=q M_n A^{-1}$,
$$A^{-1}C(q^{-1/2}z)C(q^{1/2}z) \,{\mathfrak e}(w)\frac{z-w}{q^{-1}z-w}\frac{q z-w}{z-w}
=q\, {\mathfrak e}(w)\, A^{-1}C(q^{-1/2}z)C(q^{1/2}z)$$
and \eqref{pmcom} follows for $\psi^+$. The equation for $\psi^-$ follows, as both $\psi_{\pm}(z)$
are the same rational fraction of $z$.
Eq. \eqref{pncom} follows from \eqref{pmcom} by applying $\sigma\tau$, and using \eqref{invsigtau}.

It remains to prove the commutation relation  \eqref{comn}. Define $\rho_{n,p}$ from the equation
\begin{equation}\label{tobedone}
{[} {\mathfrak e}(z), {\mathfrak f}(w){]}=\sum_{n,p\in \Z} z^{n+p} w^{-n} q^{n+p/2} {[} M_{n+p}, A^{-n}M_{r,n}\Delta^{-1}{]}=:
\sum_{n,p\in \Z} z^{n+p} w^{-n} \rho_{n,p}.
\end{equation}
Then $\rho_{n,p}$ can be computed directly for sufficiently small values of $|p|$:
\begin{equation}\label{gaprho}
\rho_{n,p}=q^{n+p/2} {[} M_{n+p}, A^{-n}M_{r,n}\Delta^{-1}{]}=q^{p/2}A^{-n}{[} M_{n+p},{M}_{r,n}{]}_{q^{-p}}\Delta^{-1}=0 
\quad {\rm for}\ p=0,\pm 1,...,\pm r
\end{equation}
by the relations \eqref{comM} for $\al=1,\beta=r$. Furthermore, when $p=\pm(r+1)$, 
\begin{eqnarray*}
\rho_{n,r+1}&=&q^{\frac{r+1}{2}}A^{-n}{[} M_{n+r+1},{M}_{r,n}{]}_{q^{-r-1}}\Delta^{-1}= (-1)^{r}(q-1)q^{-\frac{r+1}{2}}A^{-n}
M_{r+1,n+1}\Delta^{-1}\\
&=&(-1)^{r}(q-1)q^{-\frac{r+1}{2}}A\\
\rho_{n,-r-1}&=&q^{-\frac{r+1}{2}}A^{-n}{[} M_{n-r-1},{M}_{r,n}{]}_{q^{r+1}}\Delta^{-1}=(-1)^{r+1}(q-1)q^{-\frac{r+1}{2}}A^{-n} 
M_{r+1,n-1} \Delta^{-1}\\
&=&(-1)^{r+1}(q-1)q^{-\frac{r+1}{2}}A^{-1}
\end{eqnarray*}
by \eqref{leftmal} and \eqref{rightmal} for $\al=r+1$. 

Using the linear recursion \eqref{consar}, define $\sigma_{n,p}$ from the equation
\begin{equation}\label{rhosig}
 \sum_{j=0}^{r+1}(-q^{1/2})^j C_j \rho_{n,p-j} =-q^{n+p/2} \sum_{j=0}^{r+1}(-1)^j C_j A^{-n}M_{r,n}\Delta^{-1} M_{n+p-j}=:\sigma_{n,p} \quad
(n,p\in \Z)
\end{equation}
with the initial values $\sigma_{n,p}=0$ for $p=1,2,...,r$ (by \eqref{gaprho}), and
$$\sigma_{n,0}=(-1)^{r+1}C_{r+1}\rho_{n,-r-1}=(q-1)q^{-\frac{r+1}{2}}, \qquad 
\sigma_{n,r+1}=C_0\rho_{n,r+1}=(-1)^{r}(q-1)q^{-\frac{r+1}{2}}A.$$
Moreover, \eqref{rrec} implies the right recursion relation:
\begin{equation}\label{recusigma}
\sum_{j=0}^{r+1}\sigma_{n,p+j} (-q^{-1/2})^{j}C_{r+1-j}=0.
\end{equation}
Then $\sigma_{n,p}$ is entirely fixed by the initial conditions. Indeed, we may consider the recursion relation \eqref{recusigma}
as a descending recursion on $p$ for $\sigma_{n,p}$ for $p\leq r$, with initial conditions $0,0,...,0,(q-1)q^{-\frac{r+1}{2}}$
for $p=r,r-1,...,1,0$, respectively, which fixes entirely all $\sigma_{n,p}$ for $p\leq 0$. On the other hand, we may view it as an ascending
recursion relation on $p$ for $\sigma_{n,p}$ for $p\geq 1$, with initial conditions $0,0,...,0,(-1)^{r}(q-1)q^{-\frac{r+1}{2}}A$
for $p=1,2,...,r,r+1$, which fixes entirely all $\sigma_{n,p}$ for $p\geq r+1$.
The result may be rewritten as: 
$$\sigma_{n,+}(z):=\sum_{p\geq 1} z^p\sigma_{n,p}= (-1)^{r}(q-1)(q^{-1/2}z)^{r+1}A C(q^{-1/2}z)^{-1}$$
as an identity between power series of $z$
while
$$\sigma_{n,-}(z^{-1}):=\sum_{p\leq 0} z^p\sigma_{n,p}=(-1)^{r+1}(q-1)q^{-\frac{r+1}{2}} 
A\left(\frac{C(q^{-1/2}z)}{(q^{-1/2}z)^{r+1}}\right)^{-1}
=(q-1)q^{-\frac{r+1}{2}}\hat{C}(q^{1/2} z^{-1})^{-1}$$
with $\hat{C}$ as in \eqref{hatC},
as an identity between power series of $z^{-1}$. Introducing similarly the series
$\rho_{n,\pm}(z^{\pm 1}):=\sum_{\pm p\geq 0}z^p \rho_{n,p}$, we may rephrase \eqref{rhosig} as:
$$ C(q^{1/2}z)\rho_{n,\pm}(z^{\pm 1})=\sigma_{n,\pm}(z^{\pm 1}) $$
resulting in
\begin{eqnarray*}
\rho_{n,+}(z)&=&(-1)^{r}(q-1)(q^{-1/2}z)^{r+1}A C(q^{1/2}z)^{-1}C(q^{-1/2}z)^{-1}\\
&=&(1-q){\mathfrak \psi}^+(z)\\
\rho_{n,-}(z^{-1})&=&(-1)^{r+1}(q-1)(q^{1/2}z)^{-r-1} A\left(\frac{C(q^{1/2}z)}{(q^{1/2}z)^{r+1}}\frac{C(q^{-1/2}z)}{(q^{-1/2}z)^{r+1}}\right)^{-1}\\
&=&
(-1)^{r+1}(q-1)(q^{1/2}z)^{-r-1} A^{-1}\left(\hat{C}(q^{-1/2} z^{-1})\hat{C}(q^{1/2} z^{-1})\right)^{-1}\\
&=&-(1-q){\mathfrak \psi}^-(z)
\end{eqnarray*}
This shows that $\rho_{n,p}$ is independent of $n$, and we may finally express \eqref{tobedone}
as:
$${[} {\mathfrak e}(z), {\mathfrak f}(w){]}=\sum_{n\in \Z} \left(\frac{z}{w}\right)^n\sum_{p\in \Z}z^p\rho_{n,p}
= \delta(z/w) (\rho_{n,+}(z)+\rho_{n,-}(z))=(1-q)\delta(z/w)({\mathfrak \psi}^+(z)-{\mathfrak \psi}^-(z))$$

This completes the proof of the theorem.
\end{proof}

\section{Discussion/Conclusion}

In this paper, we have expressed the algebra $\U_r$ associated with the $A_r$ quantum $Q$-system as an $r$-dependent quotient
of the quantum enveloping algebra ${\mathcal U}_{\sqrt{q}}({\mathfrak n}_+[u,u^{-1}])$. This was proved using the 
integrable structure underlying the quantum $Q$-system, making use of quantum conserved quantities. 

We have also obtained a remarkably compact expression \eqref{qdetone} for the solutions $M_{\al,n}$ of the 
$M$-system and commutation
relations (\ref{Msys}-\ref{Mcom}) as polynomials of the variables $\{M_i\}_{i\in \Z}$ with coefficients in $\Z[q]$. 
Up to transforming back $M_{\al,n}\to Q_{\al,n}$, and a harmless renormalization,
the latter had been conjectured in \cite{DFKnoncom} to be the result of telescopic products
of increasing principal quasi-minors of the ``discrete Wronskian" matrix 
$W_{\al,n}=\left(M_{n-a+b}\right)_{1\leq a,b \leq \al}$, using Gelfand and Retakh's definition of quasi-determinants for
matrices with non-commuting entries \cite{GR}. This led us to coining the result as a ``quantum determinant".
We note that
the expression \eqref{qdetone} involves a $q$-deformation $\Delta_q(u_1,...,u_\al)$
of the Vandermonde determinant which for $q=1$ reads simply 
$\Delta_1(u_1,...,u_\al)=\det_{1\leq i,j\leq \al}\left( u_i^{i-j}\right)$.
For generic $q$ however $\Delta_q(u_1,...,u_\al)$ is not such a simple determinant, but it turns out to be the
{\it lambda-determinant} of the same Vandermonde matrix $V_\bu=\left( u_i^{i-j}\right)$, 
as defined by Robbins and Rumsey \cite{RR}, for the particular value $\lambda=-q$. 
The latter is known to be related to the classical $T$-system with coefficients, a higher-dimensional version 
of the $Q$-system, itself having a cluster algebra formulation \cite{DFlambda}.
In particular, it was shown that the lambda-determinant of any matrix is the partition function for suitable families of 
non-intersecting  lattice paths whose local weights involve the matrix entries, or alternatively of weighted domino tilings 
of the so-called Aztec diamond. The expression \eqref{qdetone} should therefore allow to interpret the quantum
determinant of the matrix $W_{\al,n}$ as the partition function for suitable families of 
non-intersecting  lattice paths with {\it non-commuting} local weights involving the entries $\{M_i\}_{i\in \Z}$, thus providing 
a non-trivial example of a quantum Gessel-Viennot theorem \cite{GV}.

We note also that expressions like \eqref{qdetone} and more generally the constant term identities of Section \ref{proofsec}
 are closely related to identities in shuffle algebras \cite{NEGshuf}. 

Constructing moreover other currents in order to complete the quantum affine algebra 
${\mathcal U}_{\sqrt{q}}({\mathfrak sl}_2[u,u^{-1}])$, we found that the Cartan currents $\psi^\pm$ have a non-standard
structure, as series respectively of $z,z^{-1}$, both with valuation $r+1$. It is interesting to note that the limit $r\to \infty$
of the $A_r$ quantum $Q$-system makes sense as arising from a cluster algebra of infinite rank. The valuation property
just mentioned implies that $\psi^\pm\to 0$ as $r\to \infty$, so that the currents $\mathfrak e$ and $\mathfrak f$ provide
two commuting copies of ${\mathcal U}_{\sqrt{q}}({\mathfrak n}_+[u,u^{-1}])$, while the quotient by $M_{r+2,n}=0$
is removed.

This non-standard structure of the Cartan currents can be understood in the light of a natural $t$-deformation of the
whole picture. In \cite{DFK15}, we have constructed a representation of the $A_r$  $M$-system 
and commutations (\ref{Msys}-\ref{Mcom}) by means of difference
operators acting on a space of symmetric functions. The latter generalize the so-called dual $q$-Whittaker limit 
$t\to \infty$ of the Macdonald operators, of which Macdonald polynomials are common eigenfunctions \cite{macdo}.
They act on symmetric functions of the variables $x_1,x_2,...,x_{r+1}$
as:
\begin{equation}\label{genq}
{\mathcal M}_{\al,n}=\sum_{I\subset [1,r+1]\atop |I|=\al} \Big(\prod_{i\in I} x_i\Big)^n
\prod_{i\in I\atop j\not \in I} \frac{x_i}{x_i-x_j} \, \prod_{i\in I} D_i \qquad \al\in [0,r+1], n\in \Z ,
\end{equation}
where the ``shift" operator $D_i$ acts on functions of $x_1,x_2,...,x_{r+1}$ by the substitution $x_i\mapsto q x_i$.
It is easy to check that these operators obey the relations of Theorem \ref{pretoro}, by taking:
$$C(z)=\prod_{i=1}^{r+1}(1-z x_i), \quad {\hat C}(z)=  \prod_{i=1}^{r+1}(1-z x_i^{-1}), \quad A=\prod_{i=1}^{r+1} x_i 
\, , \quad \Delta=\prod_{i=1}^{r+1} D_i \, , $$
and noting that:
\begin{eqnarray*}
{\mathfrak e}(z)&=&\sum_{i=1}^{r+1}\delta(q^{1/2}z x_i)\prod_{j\neq i}
\frac{x_i}{x_i-x_j} \, D_i\\
{\mathfrak f}(z)&=&\sum_{i=1}^{r+1}\delta(q^{-1/2}z x_i)\prod_{j\neq i}
\frac{x_j}{x_j-x_i} \, D_i^{-1} .
\end{eqnarray*}

In the sequel to the present paper \cite{DFK16}, we show that there is a natural $t$-deformation
of these generalized Macdonald operators,
within the contex of Double Affine Hecke Algebras, and show that they give rise to a representation of the quantum 
toroidal algebra for ${\mathfrak gl}_1$ of \cite{FJqtoroidal} at level 0. These act on symmetric functions of the variables 
$x_1,x_2,...,x_{r+1}$ as the following difference operators:
\begin{equation}\label{genqt}
{\mathcal M}_{\al,n}^{q,t}=\sum_{I\subset [1,r+1]\atop |I|=\al} \Big(\prod_{i\in I} x_i\Big)^n
\prod_{i\in I\atop j\not \in I} \frac{t x_i -x_j}{x_i-x_j} \, \prod_{i\in I} D_i \qquad \al\in [0,r+1], n\in \Z ,\end{equation}
with $D_i$ as above. These operators generalize the $A_r$ Macdonald operators \cite{macdo}, to which they reduce for $n=0$.
The non-standard structure of the Cartan currents $\psi^\pm$ is simply a consequence of the $t\to\infty$ limit of the
quantum  toroidal algebra for ${\mathfrak gl}_1$.

%
%



\bibliographystyle{alpha}

\bibliography{refs}

\end{document}